\title[Law of large numbers for strictly stationary Banach-valued random 
fields]{Weak and strong law of large numbers for strictly stationary 
Banach-valued random fields}
\keywords{stationary random fields, strong law of large numbers, Banach spaces}
\date{\today}
\address[$\dagger$]{Institut de Recherche Mathématique Avancée
UMR 7501, Université de Strasbourg and\\ CNRS
7 rue René Descartes
67000 Strasbourg, France
}
\email{dgiraudo@unistra.fr}
\author{Davide Giraudo}
\numberwithin{equation}{section}
\renewcommand{\leq}{\leqslant}
\renewcommand{\geq}{\geqslant}
\newtheorem{Theorem}{Theorem}[section]
\newtheorem{Proposition}[Theorem]{Proposition}
\newtheorem{Lemma}[Theorem]{Lemma}
\newtheorem{Definition}[Theorem]{Definition}
\newtheorem{Corollary}[Theorem]{Corollary}
\theoremstyle{remark}
\tikzstyle{Vertex}=[circle,draw=LimeGreen!80,fill=LimeGreen!8,
\tikzstyle{Node}=[Vertex,draw=RoyalBlue!80,fill=RoyalBlue!8,inner sep=1.5pt]
\tikzstyle{Leaf}=[rectangle,draw=Black!70,fill=Black!16,
\tikzstyle{Edge}=[Maroon!80,cap=round,line width=1pt]
\tikzstyle{Mark1}=[draw=BrickRed!80,fill=BrickRed!8]
\tikzstyle{Mark2}=[draw=BurntOrange!80,fill=BurntOrange!8]
\tikzstyle{EdgeRew}=[->,RedOrange!80,cap=round,thick]
\newcommand{\intent}[1]{\llbracket #1\rrbracket}
\newcommand{\Aca}{\mathcal{A}}
\newcommand{\Bca}{\mathcal{B}}
\newcommand{\Fca}{\mathcal{F}}
\newcommand{\Gca}{\mathcal{G}}
\newcommand{\iid}{i.i.d.\ }
\newcommand{\B}{\mathbb{B}}
\newcommand \ens[1]{\left\{ #1\right\}}
\newcommand{\gk}{\gr{k}}
\newcommand \R{\mathbb R}
\newcommand \N{\mathbb N}
\newcommand \PP{\mathbb P}
\newcommand{\el}{\mathbb L}
\newcommand{\E}[1]{\mathbb E\left[#1\right]}
\newcommand{\pr}[1]{\left(#1\right)}
\newcommand \Z{\mathbb Z}
\newcommand \abs[1]{\left|#1\right|}
\newcommand \eps{\varepsilon}
\newcommand{\norm}[1]{\left\lVert #1 \right\rVert}
\newcommand{\gr}[1]{\bm{#1}}
\newcommand{\imd}{\preccurlyeq}
\newcommand{\smd}{\succcurlyeq}
\newcommand{\grn}{\gr{n}}
\newcommand{\grN}{\gr{N}}
\newcommand{\gru}{\gr{u}}
\newcommand{\gri}{\gr{i}}
\newcommand{\grj}{\gr{j}}
\newcommand{\grk}{\gr{k}}
\newcommand{\ind}[1]{\mathbf{1}_{#1}}
\newcommand{\ent}[1]{\left\lfloor #1\right\rfloor}
\newcommand{\card}[1]{\operatorname{Card}\pr{#1}}
\newcommand{\Id}{\operatorname{Id}}
\begin{document}

\begin{abstract}
 In this paper, we investigate the law of large numbers for strictly stationary random fields, that is, we provide sufficient conditions on
 the moments and the dependence of the random field in order to guarantee the almost sure convergence to $0$ and the convergence in $\mathbb L^p$ of partials sums over squares or rectangles of $\mathbb Z^d$. Approximation by multi-indexed martingales as well as
 by $m$-dependent random fields are investigated. Applications to 
 functions of $d$-independent Bernoulli shifts and to functionals of 
\iid random fields are also provided.
\end{abstract}
\maketitle

\section{Introduction}

The law of large numbers is one of the most fundamental theorems in probability theory and statistics. It states that if $\pr{X_i}_{i\geq 1}$ is an i.i.d.\ sequence such that $\E{\abs{X_1}}<\infty$, then $n^{-1}\sum_{i=1}^nX_i$ converges to $\E{X_1}$ in the almost sure sense. A more general version, called Marcinkiewicz strong law of large numbers, states that if $1\leq p<2$ and $\E{\abs{X_1}^p}<\infty$, then $n^{-1/p}\sum_{i=1}^n\pr{X_i-\E{X_i}}$ converges to $0$ almost surely. Several authors investigated then the case of 
dependent sequences, see for instance 
\cite{MR1334177,MR2111727,MR1785247,MR0799152,MR2743029}.  

We are interested  in analoguous results for collections of random variables 
indexed by $\Z^d$.
Throughtout the paper, we will denote for a positive integer $d$
the set $\ens{1,\dots,d}$ by $\intent{1,d}$, for 
$\gri=\pr{i_\ell}_{\ell\in\intent{1,d}}$ and 
$\grj=\pr{j_\ell}_{\ell\in\intent{1,d}}$, $\gri\imd\grj$ means that 
for each $\ell\in\intent{1,d}$, $i_\ell\leq j_\ell$. We will write $\gr{1}$ 
for the element of $\Z^d$ whose coordinates are all equal to one and for 
$\grn=\pr{n_\ell}_{\ell\in\intent{1,d}}$, 
$\abs{\grn}=\prod_{\ell=1}^d n_\ell$.
We are looking for conditions on the dependence and the moments of 
the centered random field $\pr{X_{\gri}}_{\gri\in\Z^d}$ taking values in a 
Banach space $\pr{\B,\norm{\cdot}_{\B}}$ in order to guarantee 
the convergence of 
\begin{equation}\label{eq:general_statement_LLN_rect}
 \frac 1{\abs{\grn}}\norm{\sum_{\gr{1}\imd\gri\imd\grn}X_{\gri}}_{\B}
\end{equation}
to $0$ in the almost sure sense or in $\el^p$ for some $p$ in a range that 
depends on the Banach space $\B$, as one of the coordinates of $\grn$ goes to 
infinity. We will also consider the almost sure convergence to $0$ of the sum 
of squares, namely
\begin{equation}\label{eq:general_statement_LLN_squares}
 \frac 1{n^{d/p}}\norm{\sum_{\gr{1}\imd\gr{i}\imd n\gr{1}}X_{\gri} }_{\B}\to 0
\end{equation}
as $n$ goes to infinity. 

In the \iid and $\B=\R$ case, \cite{MR494431} provided a necessary and 
sufficient condition for the 
convergence the term involved in \eqref{eq:general_statement_LLN_rect} as all 
the coordinates of $\grn$ go to infinity when $\B=\R$, namely, that 
\begin{equation}
 \E{\abs{X_{\gr{0}} }^p\pr{\log\pr{1+\abs{X_{\gr{0}} }}}^{d-1}  }<\infty.
\end{equation}
Still for independent random variables, a necessary and sufficient condition 
has been found in \cite{MR0871253}, Theorem~2.2, when the summation is done 
over 
sets that are not necessarily rectangles.

Under the assumption that the random field $\pr{X_{\gri}}_{\gri\in\Z^d}$ is 
i.i.d.\ and satisfies a condition which is stronger than $\E{\abs{X_1}}<\infty$
but weaker than $\E{\abs{X_1}\pr{\log\pr{1+\abs{X_1}}  }^{d-1}  }$, a strong law 
of large numbers for weighted sums over rectangle have been established in 
\cite{MR3892761}.

Under the independence but without the identical distribution assumption, a law 
of large number and convergence rates have been obtained in \cite{MR4515464}. 
A result of the same spirit for pairwise independent random fields has been 
obtained in \cite{MR1944146}.

Assymetric law of large numbers, that is, with the normalization
$\prod_{\ell\in\intent{1,d}}n_\ell^{\alpha_\ell}$ with potentially different 
exponents $\alpha_\ell$, for negatively associated random fields has been 
investigated in \cite{MR3211166}.
In Theorem~3.2 of \cite{MR2608813}, a strong law of large numbers
for martingale random fields has been established. On one hand, there condition 
on the martingale property is more restrictive than orthomartingales (see 
Definition~\ref{dfn:def_orthomartingale}). On the 
other hand, we put restriction
on the filtration, namely commutativity, while Dung and Duy Tien do not.
When applied to identically distributed random fields, convergence
\eqref{eq:convergence_presque_sure} holds under a slightly stronger
moment assumption since they need $\E{\norm{D_{\gr{0}}}^{p+\delta}_{\B}}$ to
be finite for some positive $\delta$. This result was improved in
\cite{MR2970301}, Theorem~3.3, where a similar result under the
same martingale assumption as in \cite{MR2608813}, but with
the optimal moment assumption 
$\E{\norm{D_{\gr{0}}}_{\B}^p\pr{\log\pr{1+\norm{D_{\gr{0}}}_{\B}}}^{d-1}}
<\infty$
. An other type of martingale difference random fields
were investigated in \cite{MR3706696}, which does not seem to be directly 
comparable with orthomartingales. The case of real-valued orthomartingales have 
been treated in \cite{MR2794415}, but it is not easy to compare with our 
conditions because of the condition (4.2) in the aforementioned paper where 
some 
convergence rates on a conditional maximum are required. An other result 
concerning orthomartingale difference random fields which are not necessarily 
identically distributed or stochastically dominated has been studied in 
\cite{MR3069895}.
When $p=1$, the convergence \eqref{eq:convergence_presque_sure} has been
shown in \cite{MR3452228} in the context of pairwise independent and identically 
distributed random fields.

In this paper, we are interested in the strong law of large for random fields, 
that is, collections of random variables indexed by $d$-uples of integers and 
the summation over $\ens{1,\dots,n}$
is replaced by rectangles of $\Z^d$. We will assume that the involved random 
variables 
take their values in a separable Banach space $\pr{\B,\norm{\cdot}_{\B}}$.

We will use essentially two approaches: a first one 
by approximating via multi-indexed martingales and a second one by 
approximation by $m$-dependent random fields, giving different ranges of 
application. 

The paper is organized as follows. In 
Section~\ref{sec:LGN_orthomartingale_approx}, we establish results 
on the Marcinkiewicz law of large numbers for stationary random fields 
using an approximation by multi-indexed martingales. We first complement the 
already obtained statement for the strong law of large numbers on rectangles by 
providing sufficient conditions for the strong law of large numbers on squares 
and the convergence in $\el^p$. Then we provide results for stationary random 
fields by approximating by 
such martingale differences random fields. This method was performed in order 
to derive weak invariance principles 
\cite{MR3869881,MR3222815,MR3646441,MR4486233,MR4477977,MR3798239}, quenched 
invariance principles \cite{MR4125956,MR4166203,reding:hal-03246736} and 
the bounded law of the iterated logarithms \cite{MR4186670}.

In Section~\ref{sec:LLN_shifts},  the results obtained by 
orthomartingale approximation are used in order to derive a Marcinkiewicz 
strong law of large numbers for random fields that can be expressed as a Hölder 
continuous function of $d$ mutually independent sequences expressable as 
functions of \iid sequences.
The condition involves the exponent of Hölder regularity, the dimension $d$ and 
the dependence coefficient of each involved sequence.

In Section~\ref{sec:lgn_Bernoulli random fields}, we investigate the 
aforementioned laws of large numbers for random fields expressable as a 
functional of an \iid random field. We propose an application to a Hölder 
continuous funtional of a Banach valued linear random field. The condition is 
written in terms of the exponent of Hölder regularity, the moments of the 
innovations and the operator norm of the coefficients.

Section~\ref{sec:proofs} is devoted to the proofs of the results of 
Sections~\ref{sec:LGN_orthomartingale_approx}, \ref{sec:LLN_shifts} 
and \ref{sec:lgn_Bernoulli random fields}. The needed auxiliary results are 
grouped in the Appendix.

\section{Weak and strong law of large numbers via orthomartingale 
approximation}\label{sec:LGN_orthomartingale_approx}

\subsection{The orthormartingale case}
\label{subsec:lln_ortho}

The notion of multi-indexed martingale requires the notion of multi-indexed 
filtration. We will also require the filtration to be commuting in the 
following sense.
\begin{Definition}\label{dfn:commutante}
 We say that a collection of $\sigma$-algebras $\pr{\Fca_{\gri}}_{\gri\in\Z^d}$
is a completely commuting filtration if
\begin{enumerate}
 \item for each $\gri,\grj\in\Z^d$ such that $\gri\imd\grj$,
$\Fca_{\gri}\subset\Fca_{\grj}$ and
 \item for each $Y\in\mathbb L^1$ and each $\gri,\grj\in\Z^d$,
 \begin{equation}\label{eq:def_filtration_commutante}
  \E{\E{Y\mid\Fca_{\gri}}\mid\Fca_{\grj}}=
  \E{Y\mid\Fca_{\min\ens{\gri,\grj}}},
 \end{equation}
 where $\min\ens{\gri,\grj}$ is the element of $\Z^d$ defined as the
 coordinatewise minimum of $\gri$ and $\grj$, that is,
 $\min\ens{\gri,\grj}=\pr{\min\ens{i_\ell,j_\ell}}_{\ell=1}^d$.
\end{enumerate}

\end{Definition}

Let us give two examples of commuting filtrations.
\begin{Proposition}\label{prop:exemple_filtra_commutantes}
 \begin{enumerate}
 \item If $\pr{\eps_{\gru}}_{\gru\in\Z^d}$ is i.i.d., then defining
 $\Fca_{\gri}=\sigma\pr{\eps_{\gru},\gru\in\Z^d,\gru\imd\gri}$, the filtration
 $\pr{\Fca_{\gri}}_{\gri\in\Z^d}$ is completely commuting.
 \item Suppose that $\pr{\Fca^{\pr{\ell}}_{i_\ell}
}_{i_\ell \in \Z   }$, $1\leq \ell\leq d$, are filtrations on a probability space $\pr{\Omega,\Fca,\PP}$. Suppose
that for each $i_1,\dots,i_d\in\Z$, the
$\sigma$-algebras $\Fca^{\pr{\ell}}_{i_\ell}$, $1\leq \ell\leq d$, are independent. Let
$\Fca_{\gri}=\bigvee_{\ell=1}^d\Fca^{\pr{\ell}}_{\gr{i_\ell}}$. Then
$\pr{\Fca_{\gri}}_{\gri\in\Z^d}$ is completely commuting.
\end{enumerate}
\end{Proposition}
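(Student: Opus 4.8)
The plan is to verify the two conditions of Definition~\ref{dfn:commutante} in each of the two cases, the key work being the commutation identity \eqref{eq:def_filtration_commutante}. For part (1), monotonicity is immediate: if $\gri\imd\grj$ then $\ens{\gru\in\Z^d:\gru\imd\gri}\subset\ens{\gru\in\Z^d:\gru\imd\grj}$, hence $\Fca_{\gri}\subset\Fca_{\grj}$. For the commutation identity, I would first observe that part (1) is in fact a special case of part (2): setting $\Fca^{\pr{\ell}}_{i_\ell}=\sigma\pr{\eps_{\gru}: u_\ell\leq i_\ell}$, these are filtrations in the $\ell$-th coordinate, the family $\pr{\Fca^{\pr{\ell}}_{i_\ell}}_{\ell=1}^d$ is independent for fixed $\gri$ because the index sets $\ens{\gru:u_\ell\leq i_\ell}$ partition... are disjoint — more precisely, a single $\eps_{\gru}$ contributes to $\Fca^{\pr{\ell}}_{i_\ell}$ only through its dependence, but this needs the sets to be disjoint, which they are not. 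So instead I would keep the two parts separate, or better, reduce (1) to (2) by a different device: write $\Fca_{\gri}=\bigvee_{\ell=1}^d\Gca^{\pr{\ell}}$ is not possible directly, so I treat (1) on its own using the i.i.d.\ structure.

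So concretely, for part (1): it suffices by a monotone class / density argument to check \eqref{eq:def_filtration_commutante} for $Y$ of the form $Y=\prod_{\gru\in F}g_{\gru}\pr{\eps_{\gru}}$ with $F\subset\Z^d$ finite and $g_{\gru}$ bounded measurable, since such $Y$ are dense in $\el^1$ of the full $\sigma$-algebra and both sides of \eqref{eq:def_filtration_commutante} are $\el^1$-contractions. By independence, $\E{Y\mid\Fca_{\gri}}=\prod_{\gru\in F,\,\gru\imd\gri}g_{\gru}\pr{\eps_{\gru}}\cdot\prod_{\gru\in F,\,\gru\not\imd\gri}\E{g_{\gru}\pr{\eps_{\gru}}}$; applying $\E{\cdot\mid\Fca_{\grj}}$ again and using independence once more replaces the condition $\gru\imd\gri$ by $\gru\imd\gri$ and $\gru\imd\grj$, i.e.\ $\gru\imd\min\ens{\gri,\grj}$, which is exactly $\E{Y\mid\Fca_{\min\ens{\gri,\grj}}}$. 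For part (2): monotonicity follows from $\gri\imd\grj\Rightarrow\Fca^{\pr{\ell}}_{i_\ell}\subset\Fca^{\pr{\ell}}_{j_\ell}$ for every $\ell$, hence $\Fca_{\gri}=\bigvee_\ell\Fca^{\pr{\ell}}_{i_\ell}\subset\bigvee_\ell\Fca^{\pr{\ell}}_{j_\ell}=\Fca_{\grj}$. For the commutation identity I would again reduce to $Y=\prod_{\ell=1}^d Y_\ell$ with $Y_\ell$ bounded and $\Fca^{\pr{\ell}}_{\infty}$-measurable (i.e.\ measurable with respect to $\bigvee_{k}\Fca^{\pr{\ell}}_k$), these being dense enough by the independence of the $d$ coordinatewise filtrations; then $\E{Y\mid\Fca_{\gri}}=\prod_{\ell=1}^d\E{Y_\ell\mid\Fca^{\pr{\ell}}_{i_\ell}}$ because of the independence, and iterating gives $\prod_\ell\E{\E{Y_\ell\mid\Fca^{\pr{\ell}}_{i_\ell}}\mid\Fca^{\pr{\ell}}_{j_\ell}}=\prod_\ell\E{Y_\ell\mid\Fca^{\pr{\ell}}_{\min\ens{i_\ell,j_\ell}}}=\E{Y\mid\Fca_{\min\ens{\gri,\grj}}}$, using the one-dimensional tower property in each coordinate.

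The main obstacle, and the point requiring care, is the justification that it is enough to check \eqref{eq:def_filtration_commutante} on the special product-form random variables: one must argue that (a) finite linear combinations of such products are dense in $\el^1$ with respect to the generating $\sigma$-algebra, and (b) both sides of \eqref{eq:def_filtration_commutante}, as functions of $Y$, are continuous (indeed contractive) on $\el^1$, so the identity extends from the dense set to all of $\el^1$. Density follows from the fact that the product $\sigma$-algebra (resp.\ the join $\bigvee_\ell\Fca^{\pr{\ell}}_\infty$) is generated by the algebra of finite-dimensional cylinder events, together with a standard functional monotone class argument; the contraction property is the conditional Jensen inequality. A secondary subtlety is the measurability statement that $\min\ens{\gri,\grj}$ indeed indexes the $\sigma$-algebra $\sigma\pr{\eps_{\gru}:\gru\imd\min\ens{\gri,\grj}}$, which is the set of $\gru$ lying below both $\gri$ and $\grj$ — this is just the definition of coordinatewise minimum and needs only to be spelled out once.
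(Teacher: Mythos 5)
Your proposal is correct, and it is worth noting that it takes a different route from the paper only in the sense that the paper gives no self-contained argument at all: after the statement, the paper refers to Section~1 of \cite{MR0420845} (where both examples are stated without proof) and disposes of item~(1) by citing Proposition~2, p.~1693, of \cite{MR3222815}, while item~(2) is left to the reader. Your blind proof therefore supplies exactly what the paper delegates: a direct verification of \eqref{eq:def_filtration_commutante} on product-form random variables ($Y=\prod_{\gru\in F}g_{\gru}\pr{\eps_{\gru}}$ for item~(1), $Y=\prod_{\ell=1}^dY_\ell$ with $Y_\ell$ measurable with respect to $\Fca^{\pr{\ell}}_\infty:=\bigvee_{k\in\Z}\Fca^{\pr{\ell}}_k$ for item~(2)), followed by extension through density and the $\el^1$-contractivity of conditional expectation; your computation correctly converts the two successive conditionings into the constraint ``$\gru\imd\gri$ and $\gru\imd\grj$'', i.e.\ $\gru\imd\min\ens{\gri,\grj}$, which is the whole point, and your opening digression about deducing (1) from (2) is rightly abandoned, since the coordinate $\sigma$-algebras $\sigma\pr{\eps_{\gru}:u_\ell\leq i_\ell}$ are not independent. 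Two small points should be made explicit in a write-up, though neither is a gap. First, Definition~\ref{dfn:commutante} quantifies over all $Y\in\el^1\pr{\Omega,\Fca,\PP}$, whereas your dense class only exhausts $\el^1$ of the $\sigma$-algebra generated by the field (respectively of $\bigvee_{\ell}\Fca^{\pr{\ell}}_\infty$); this is harmless because, by the tower property, both sides of \eqref{eq:def_filtration_commutante} are unchanged when $Y$ is replaced by its conditional expectation given that $\sigma$-algebra, but the reduction should be stated. Second, in item~(2) the hypothesis is independence of $\Fca^{\pr{1}}_{i_1},\dots,\Fca^{\pr{d}}_{i_d}$ for each fixed multi-index, while the product formula $\E{\prod_\ell Y_\ell\mid\Fca_{\gri}}=\prod_\ell\E{Y_\ell\mid\Fca^{\pr{\ell}}_{i_\ell}}$ (and its iteration, which uses that each factor $\E{Y_\ell\mid\Fca^{\pr{\ell}}_{i_\ell}}$ is again $\Fca^{\pr{\ell}}_\infty$-measurable) rests on independence of the limit $\sigma$-algebras $\Fca^{\pr{\ell}}_\infty$; this does follow from the hypothesis because each $\bigcup_k\Fca^{\pr{\ell}}_k$ is a $\pi$-system, but the one-line $\pi$-system argument deserves to appear.
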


Both examples where introduced in Section 1 of \cite{MR0420845}, but without
proof. The first item is a direct consequence of Proposition~2 p. 1693 of
\cite{MR3222815}.

We are now in position to define orthomartingale
martingale difference random field, which allows to exploit the martingale
property in every
direction. To formize this, we need to denote by
$e_{\gr{\ell}}$, $\ell\in\intent{1,d}$,  the element of $\Z^d$ whose
$\ell$-th coordinate is $1$ and all the others are zero.

\begin{Definition}\label{dfn:def_orthomartingale}
 Let $\pr{X_{\gri}}_{\gri\in \Z^d}$ be a random field taking values in a
separable Banach space $\pr{\B,\norm{\cdot}_{\B}}$. We say that
$\pr{X_{\gri}}_{\gri\in \Z^d}$ is an orthomartingale
martingale difference random field with respect to the completely commuting
filtration
$\pr{\Fca_{\gri}}_{\gri\in\Z^d}$ if for each $\gri\in\Z^d$,
$\norm{X_{\gri}}_{\B}$ is integrable, $X_{\gri}$ is $\Fca_{\gri}$-measurable 
and for
each $\ell\in\intent{1,d}$, $\E{X_{\gri}\mid \Fca_{\gri-\gr{e_\ell}}}=0$.
\end{Definition}

Such a definition is very convenient because summation on a rectangular region
of $\Z^d$ can be treated with martingale properties when summing on a fixed
coordinate. 

The proof of the law of large number usually rest on satisfactory moment 
inequalities for martingales. Therefore, we will work will smooth Banach spaces 
in the following sense.

\begin{Definition}
 Let $\pr{\B,\norm{\cdot}_{\B}}$ be a separable Banach space. We say that $\B$
is $r$-smooth for $1<r\leq 2$ if there exists an equivalent norm
$\norm{\cdot}'_{\B}$ on $\B$ such that
\begin{equation} 
\sup_{t>0}\sup_{x,y\in\B,\norm{x}'_{\B}=\norm{y}'_{\B}=1,}\frac{\norm{x+ty}'_{\B
}+\norm{x-ty}'_{\B}-2}{
t^r}<\infty.
\end{equation}

\end{Definition}

For example, if $\mu$ is $\sigma$-finite on the Borel $\sigma$-algebra of $\R$, 
then $\mathbb L^p\pr{\R,\mu}$ is $\min\ens{p,2}$-smooth.
Moreover, a separable Hilbert space is $2$-smooth.

Given a random variable $X$ taking values in a separable Banach space 
$\pr{\B,\norm{\cdot}_{\B}}$ and $p\geq 1$, $q\geq 0$, we define 
\begin{equation}\label{eq:def_norme_el_p}
\norm{X}_{\B,p}:=\pr{\E{\norm{X}_{\B}^p}}^{1/p}\mbox{ and }
\end{equation}
\begin{equation}\label{eq:def_norme_el_pq}
\norm{X}_{\B,p,q}:=\norm{\norm{X}_{\B}}_{p,q},
\end{equation}
where, for a real valued random variable $Y$, 
\begin{equation}\label{eq:def_varphi_pq}
\norm{Y}_{p,q}=\inf\ens{\lambda>0,
\varphi_{p,q}\pr{\frac{\abs{Y}}{\lambda}}\leq 1},
\varphi_{p,q}\pr{t}=t^p\pr{1+ \ind{t\geq 1}\ln t }^q.
\end{equation}
Note that $\norm{X}_{\B,p,0}=\norm{X}_{\B,p}$.
Denote by $\mathbb L_{p,q}$ the space of random variables
$Y$ such that $\E{\varphi_{p,q}\pr{\norm{Y}_{\B}}}<\infty$.
 Conditions of the form 
 \begin{equation}\label{eq:cond_Lpq_fini}
\E{\norm{D_{\gr{0}}}_{\B}^p
\pr{\log\pr{1+\norm{D_{\gr{0}}}_{\B}}
}^{q}}<\infty 
\end{equation}
for some $p$ and $q$ are usual in the context of random fields. 
For instance, the quenched functional central limit theorem on rectangles 
for a strictly stationary orthomartingale difference random field
requires \eqref{eq:cond_Lpq_fini} for $p=2$ and $q=d-1$ (see \cite{MR4125956}) 
and for the bounded law of the iterated logarithms, \eqref{eq:cond_Lpq_fini} 
for $p=2$ and $q=2d-2$ (see \cite{MR4186670}). Therefore, the condition 
on the moments can be expressed with the help of the norm $\norm{\cdot}_{p,q}$.

We also define 
\begin{equation}
\norm{X}_{\B,p,w}:=\sup_{A:\PP\pr{A}>0}\PP\pr{A}^{-1+1/p}
\E{\norm{X}_{\B}\ind{A}}.
\end{equation} 
When $\B=\R$, we shall simply write $\norm{X}_{p}$, $\norm{X}_{p,q}$ and 
 $\norm{X}_{p,w}$ respectively. 
Note that there exists constants $c_p$ and $c'_p$ such that for each 
random variable $X$, 
\begin{equation}\label{eq:comparaison_norm_Lp_faible}
c_p\pr{\sup_{t>0}t^p\PP\pr{\norm{X}_{\B}>t}}^{1/p}\leq \norm{X}_{\B,p,w}
\leq c'_p\pr{\sup_{t>0}t^p\PP\pr{\norm{X}_{\B}>t}}^{1/p}.
\end{equation}

For $\grn=\pr{n_\ell}_{\ell=1}^d\in\N^d$, we define 
$\gr{2^n}=\pr{2^{n_\ell}}_{\ell =1}^d$  
and $\max\grn=\max_{1\leq \ell\leq d}n_\ell$ 
and we recall that $\abs{\grn}=\prod_{\ell=1}^dn_\ell$.

The following  result has been obtained for the convergence of 
normalized partial sums on rectangles of an orthomartingale difference random 
field. 
\begin{Theorem}[Theorem~2.2 in 
\cite{giraudo2022deviation}, law of large numbers on rectangles]
\label{thm:loi_forte_des_grands_nombres_orthomartingale}
  Let $\pr{\B,\norm{\cdot}_{\B}}$ be a separable
  $r$-smooth Banach space for some $r\in
(1,2]$,  $1<p<r$ and $d\in\N$. There exists a constant
  $K_{p,d,\B}$ such that the following holds: if
$\pr{D_{\gri}}_{\gr{i}\in\Z^d}$ is an identically distributed
   orthomartingale difference random field such
  that  $\norm{D_{\gr{0}}}_{\B,p,d-1} <\infty$,
  then for all positive $t$, the following inequality holds
  \begin{equation}\label{eq:control_of_sum_PAn}
  \sum_{\gr{N}\in\N^d}\PP\pr{ \abs{\gr{2^{N}}}^{-1/p}
  \max_{\gr{1}\imd\grn\imd \gr{2^N}}\norm{S_{\grn}}_{\B}
  >t}
  \leq K_{p,d}\E{\varphi_{p,d-1}\pr{\frac{\norm{D_{\gr{0}}}_{\B}}t }},
  \end{equation}
  where $S_{\grn}=\sum_{\gr{1}\imd\gri\imd\grn}D_{\gri}$.
  In particular, for some constant $C_{\B,d,p}$ depending only on $\B$, $d$ 
  and $p$,
  \begin{equation}\label{eq:control_weak_Lp_norm_maximum}
\norm{\sup_{\gr{n}\smd\gr{1}}\frac{\norm{S_{\gr{n}}}_{\B}
}{\abs{\gr {n } } ^ { 1/p } } }_{\B,p,w}
  \leq C_{\B,d,p} \norm{D_{\gr{0}}}_{\B,p,d-1}
  \end{equation}
  and the following convergence holds:
  \begin{equation}\label{eq:convergence_presque_sure}
  \lim_{N\to \infty  }\sup_{\grn\smd\gr{1}, 
\max\grn \geq N}\frac{\norm{S_{\gr{n}}}_{\B}
}{\abs{\gr{n}}^{1/p}}
=0\mbox{
  almost surely.}
  \end{equation}
\end{Theorem}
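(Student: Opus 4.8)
The plan is to reduce everything to the maximal inequality \eqref{eq:control_of_sum_PAn}, which is the heart of the theorem, and then deduce the weak-$L^p$ bound and the almost sure convergence as relatively soft consequences. For the main inequality, I would first reduce to the case $t=1$ by homogeneity (replacing $D_{\gri}$ by $D_{\gri}/t$), and then set up a dyadic blocking: for each $\grN\in\N^d$ the event involves $\max_{\gr{1}\imd\grn\imd\gr{2^N}}\norm{S_{\grn}}_{\B}$, so I would like to control a maximal partial sum over a dyadic rectangle by a single maximal martingale-type term. The natural tool is a multiparameter maximal inequality of Doob/Cairoli type: because the filtration is completely commuting and $\pr{D_{\gri}}$ is an orthomartingale difference field, the partial sums $S_{\grn}$ form an orthomartingale, and iterating the one-parameter Doob inequality in each of the $d$ directions gives $\E{\max_{\grn\imd\gr{2^N}}\norm{S_{\grn}}_{\B}^p}\lesssim \E{\norm{S_{\gr{2^N}}}_{\B}^p}$ up to a constant depending on $p$ and $d$ (this is where $p>1$ is used). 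Then $r$-smoothness enters: by the Burkholder/Pisier-type inequality for martingales with values in an $r$-smooth space, applied successively in each direction, $\E{\norm{S_{\gr{2^N}}}_{\B}^p}\lesssim \abs{\gr{2^N}}\,\E{\norm{D_{\gr{0}}}_{\B}^p}$ when $p<r$, but this crude bound is not enough to get the $\varphi_{p,d-1}$ on the right side; one must instead truncate $D_{\gri}$ at a level comparable to $\abs{\gr{2^N}}^{1/p}$, split $S_{\grn}$ accordingly, bound the large part by a union bound over the rectangle and the small part by the smooth-space martingale inequality, and then sum the resulting bounds over $\grN\in\N^d$. The bookkeeping of this summation — organizing $\grN$ by the value of $\max\grN$ and absorbing the polynomial-in-$\grN$ factors into the logarithmic weight $(\log(1+\norm{D_{\gr{0}}}_{\B}))^{d-1}$ — is exactly what produces $\varphi_{p,d-1}$, and this counting argument is the step I expect to be the main obstacle; it is the multiparameter analogue of the classical one-dimensional computation that turns a dyadic sum into an $L\log^{d-1}L$ condition.

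Once \eqref{eq:control_of_sum_PAn} is established, the weak-$L^p$ bound \eqref{eq:control_weak_Lp_norm_maximum} follows by a standard dyadic comparison. For $\grn\smd\gr{1}$ there is a unique $\grN$ with $\gr{2^{N-1}}\imd\grn\imd\gr{2^N}$ (coordinatewise), so $\abs{\grn}^{-1/p}\norm{S_{\grn}}_{\B}\leq 2^{d/p}\abs{\gr{2^N}}^{-1/p}\max_{\gr{1}\imd\grk\imd\gr{2^N}}\norm{S_{\grk}}_{\B}$; hence $\PP\pr{\sup_{\grn\smd\gr{1}}\abs{\grn}^{-1/p}\norm{S_{\grn}}_{\B}>2^{d/p}t}$ is at most the left-hand side of \eqref{eq:control_of_sum_PAn}, which by that inequality is $\leq K_{p,d}\E{\varphi_{p,d-1}\pr{\norm{D_{\gr{0}}}_{\B}/t}}$. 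Choosing $t$ so that this is $\leq 1$ forces $t$ comparable to $\norm{D_{\gr{0}}}_{\B,p,d-1}$, and then \eqref{eq:comparaison_norm_Lp_faible} together with the definition of $\norm{\cdot}_{\B,p,w}$ converts the tail bound into the asserted norm inequality, with a constant $C_{\B,d,p}$.

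Finally, for the almost sure convergence \eqref{eq:convergence_presque_sure}, I would argue as follows. By stationarity one reduces to showing $\lim_{N\to\infty}\sup_{\grn\smd\gr{1},\ \max\grn\geq N}\abs{\grn}^{-1/p}\norm{S_{\grn}}_{\B}=0$ a.s. Fix $\eps>0$; by \eqref{eq:control_of_sum_PAn} the series $\sum_{\grN\in\N^d}\PP\pr{A_{\grN}(\eps)}$ converges, where $A_{\grN}(\eps)=\ens{\abs{\gr{2^N}}^{-1/p}\max_{\grn\imd\gr{2^N}}\norm{S_{\grn}}_{\B}>\eps}$, so by Borel–Cantelli only finitely many $A_{\grN}(\eps)$ occur almost surely. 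Passing to an arbitrary $\grn$ via the dyadic comparison above, with the extra observation that $\max\grn$ large forces the associated $\grN$ to have $\max\grN$ large, we get that $\limsup_{\max\grn\to\infty}\abs{\grn}^{-1/p}\norm{S_{\grn}}_{\B}\leq 2^{d/p}\eps$ almost surely; intersecting over $\eps=1/m$, $m\in\N$, yields the claimed limit. The only delicate point here is that the supremum in \eqref{eq:convergence_presque_sure} is over all $\grn$ with $\max\grn\geq N$, including those with some small coordinates, so one must check that the dyadic envelope $\gr{2^N}$ associated to such $\grn$ still has $\max\grN\to\infty$; this is immediate since $2^{N_\ell-1}\leq n_\ell$ for all $\ell$ and $2^{N_{\ell_0}}\geq n_{\ell_0}=\max\grn$ for the maximizing coordinate $\ell_0$.
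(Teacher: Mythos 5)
First, a point of orientation: the paper does not prove this theorem at all --- it is imported verbatim from \cite{giraudo2022deviation} --- so the only in-paper material your argument can be measured against is the proof of the companion statements (Theorem~\ref{thm:loi_forte_grands_nombres_carres} and the reductions in the proof of Theorem~\ref{thm:strong_law_large_stationary}), which indeed follow the truncation scheme you sketch. Your deductions of \eqref{eq:control_weak_Lp_norm_maximum} and \eqref{eq:convergence_presque_sure} from \eqref{eq:control_of_sum_PAn} are correct and essentially complete: the dyadic envelope $\gr{2^{N-1}}\imd\grn\imd\gr{2^N}$, the observation that $\E{\varphi_{p,d-1}\pr{\norm{D_{\gr{0}}}_{\B}/t}}\leq\pr{\norm{D_{\gr{0}}}_{\B,p,d-1}/t}^p$ for $t\geq\norm{D_{\gr{0}}}_{\B,p,d-1}$ (submultiplicativity of $\varphi_{p,d-1}$) combined with \eqref{eq:comparaison_norm_Lp_faible}, and the Borel--Cantelli step in which the a.s.\ finite set of bad $\grN$ sits in a bounded box while $\max\grn\to\infty$ forces $\max\grN\to\infty$, are exactly the standard route.

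For the key inequality \eqref{eq:control_of_sum_PAn} your outline has the right shape (truncate at level $\abs{\gr{2^N}}^{1/p}$, treat the large part cheaply, apply the $r$-smooth Burkholder-type and Cairoli--Doob inequalities to the small part, then sum over $\grN$), but two points are genuinely missing. First, after truncation the field $D_{\gri}\ind{\norm{D_{\gri}}_{\B}\leq\abs{\gr{2^N}}^{1/p}}$ is no longer an orthomartingale difference field, so neither Doob's iterated inequality nor Proposition~\ref{prop:moments_ordre_r_orthomartingale_Banach} applies to it as written; one must recenter via the inclusion--exclusion of conditional expectations, working with $\sum_{J\subset\intent{1,d}}\pr{-1}^{\abs{J}}\E{D_{\gri}\ind{\norm{D_{\gri}}_{\B}\leq\abs{\gr{2^N}}^{1/p}}\mid\Fca_{\gri-\ind{J}}}$ and its complementary part, exactly as in the paper's proof of Theorem~\ref{thm:loi_forte_grands_nombres_carres}; correspondingly the unbounded part is controlled by the triangle inequality and Markov's inequality at order one, not by a plain union bound, since after recentring it consists of conditional expectations rather than indicator events. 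Second, the summation bookkeeping that you explicitly defer as ``the main obstacle'' is precisely where the weight $\pr{\log\pr{1+\cdot}}^{d-1}$ is produced: the bound for a fixed $\grN$ depends on $\grN$ only through $\abs{\gr{2^N}}=2^{\sum_\ell N_\ell}$, so one groups by $k=\sum_\ell N_\ell$ (rather than by $\max\grN$), uses $\card{\ens{\grN\in\N^d:\sum_{\ell}N_\ell=k}}\leq c_dk^{d-1}$, which is \eqref{eq:card_duples_somme_k}, and then invokes the elementary series estimates of Appendix~\ref{apB}. Until that computation is carried out the proof of \eqref{eq:control_of_sum_PAn} is not complete, although all the missing ingredients are available in the paper and the overall strategy is the intended one.
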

Such a result was know for $d=1$ (see \cite{MR0668549} and Proposition~2.1 in 
\cite{MR3322323}).

When we consider the summation over squares, that is, sets 
of the form $\intent{1,n}^d$, where $n$ is an integer bigger than one, it turns out that we can embed the orthomartingale into an one-dimensional martingale. As a consequence, only finite moments of order $p$ are required. 
\begin{Theorem}[Law of large numbers 
on squares]\label{thm:loi_forte_grands_nombres_carres}
  Let $\pr{\B,\norm{\cdot}_{\B}}$ be a separable
  $r$-smooth Banach space for some $r\in
(1,2]$,  $1<p<r$ and $d\in\N$. There exists a constant
  $C_{\B,d,p}$ such that the following holds: if
$\pr{D_{\gri}}_{\gr{i}\in\Z^d}$ is an identically distributed
   orthomartingale difference random field such
  that  $\norm{D_{\gr{0}}}_{\B}\in\mathbb L_{p}$, then the following inequality holds:
  \begin{equation}\label{eq:controle_fonction_maximale_carres_orthomartingale}
  \norm{\sup_{n\geq 1}\frac{\norm{S_{n\gr{1}} }_{\B}}{n^{d/p}}}_{p,w}\leq 
C_{\B,d,p}\norm{D_{\gr{0}}}_{\B,p}.
  \end{equation}
  Moreover, the following convergence holds:
  \begin{equation}
  \lim_{n\to\infty}\frac{\norm{S_{n\gr{1}}}_{\B}}{n^{d/p}}=0\mbox{ almost surely}.
  \end{equation}
\end{Theorem}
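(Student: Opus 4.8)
The key idea, as hinted in the statement preceding the theorem, is to reduce the multi-indexed sum over squares $S_{n\gr{1}}=\sum_{\gr{1}\imd\gri\imd n\gr{1}}D_{\gri}$ to a \emph{one-dimensional} martingale, so that the classical Marcinkiewicz strong law of large numbers for martingale differences in $r$-smooth Banach spaces applies and only finite $p$-th moment is needed. Concretely, fix the completely commuting filtration $\pr{\Fca_{\gri}}_{\gri\in\Z^d}$ with respect to which $\pr{D_{\gri}}$ is an orthomartingale difference random field. For $n\geq 1$ define the one-dimensional $\sigma$-algebra $\Gca_n:=\Fca_{n\gr{1}}$ and the block $\Delta_n := S_{n\gr{1}}-S_{(n-1)\gr{1}}$, i.e. the sum of $D_{\gri}$ over the ``L-shaped'' region $\intent{1,n}^d\setminus\intent{1,n-1}^d$. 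The first step is to verify that $\pr{\Delta_n}_{n\geq 1}$ is a martingale difference sequence with respect to $\pr{\Gca_n}_{n\geq 1}$: each $\Delta_n$ is $\Gca_n$-measurable because every index $\gri\imd n\gr{1}$ satisfies $\Fca_{\gri}\subset\Fca_{n\gr{1}}$, and $\E{\Delta_n\mid\Gca_{n-1}}=0$ because each summand $D_{\gri}$ in the L-shaped region has at least one coordinate equal to $n$, hence $\gri - \gr{e_\ell}\imd (n-1)\gr{1}$ for that coordinate $\ell$, and using the commuting property together with $\E{D_{\gri}\mid\Fca_{\gri-\gr{e_\ell}}}=0$ one gets $\E{D_{\gri}\mid\Fca_{(n-1)\gr{1}}}=0$.

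The second step is the moment bound on $\Delta_n$. Since the L-shaped region $\intent{1,n}^d\setminus\intent{1,n-1}^d$ has cardinality $n^d-(n-1)^d\leq d\, n^{d-1}$, and since the $D_{\gri}$ are identically distributed with $\norm{D_{\gr{0}}}_{\B,p}<\infty$, one needs an inequality of the type $\norm{\Delta_n}_{\B,p}\leq C_{\B,d,p}\, n^{(d-1)/r}\norm{D_{\gr{0}}}_{\B,p}$ — but note $\Delta_n$ itself is a sum of $d$ many $(d-1)$-dimensional orthomartingale sums (one for each face where a coordinate is pinned to $n$), so by Theorem~\ref{thm:loi_forte_des_grands_nombres_orthomartingale} applied in dimension $d-1$ (or by an induction on $d$, or directly by Burkholder-type inequalities in $r$-smooth spaces iterated over the $d-1$ free directions) one obtains $\norm{\Delta_n}_{\B,p}\lesssim n^{(d-1)/p}\pr{\log n}^{(d-2)/p}\norm{D_{\gr{0}}}_{\B,p}$, or more crudely using only $r$-smoothness, $\norm{\Delta_n}_{\B,p}\lesssim n^{(d-1)/r}\norm{D_{\gr{0}}}_{\B,p}$. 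Any of these bounds is comfortably enough, since $d/r > d/p$ and we will divide by $n^{d/p}$; the point is only that the exponent is strictly less than $d/p$ after accounting for the normalization, because the effective ``length'' of the one-dimensional martingale up to time $n$ is $\sum_{k=1}^n\norm{\Delta_k}_{\B,p}^{r}$-type quantities controlled by $n^{d}$ up to logs, matching $n^{d/p}$ in the Marcinkiewicz exponent.

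The third step is to invoke the one-dimensional Marcinkiewicz strong law of large numbers for $\B$-valued martingale differences: for an $r$-smooth separable Banach space $\B$, $1<p<r$, and a martingale difference sequence $\pr{\Delta_n}$ with respect to $\pr{\Gca_n}$, control of $\sum_n n^{-1}\norm{\Delta_n}_{\B}^{\text{(suitable)}}$ or a direct stopping-time/maximal-inequality argument gives $\norm{\sup_{n\geq 1} n^{-d/p}\norm{S_{n\gr{1}}}_{\B}}_{p,w}\leq C_{\B,d,p}\norm{D_{\gr{0}}}_{\B,p}$ and the almost sure convergence $n^{-d/p}\norm{S_{n\gr{1}}}_{\B}\to 0$. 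Here one should be slightly careful: the normalization is $n^{d/p}$, not $n^{1/p}$, so the cleanest route is to apply the scalar/Banach Marcinkiewicz result to the martingale $\pr{\Delta_n}$ with the ``time-change'' point of view that identifies $n^{d/p}$ as the right normalization — equivalently, one checks $\sum_{n\geq 1}\PP\pr{\norm{\Delta_n}_{\B}>\eps n^{d/p}}<\infty$ (Borel--Cantelli, using the moment bound from step two and the identical distribution), truncates $\Delta_n$ at level $n^{d/p}$, and applies the $r$-smooth martingale maximal inequality to the centered truncated martingale together with summation by parts. The weak-$\el^p$ bound on the supremum follows from the same truncation scheme combined with \eqref{eq:comparaison_norm_Lp_faible}.

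The main obstacle is step two: getting the right moment bound for the L-shaped increments $\Delta_n$ without inflating the power of $n$ or the logarithmic factor beyond what the normalization $n^{d/p}$ can absorb. The danger is that a naive triangle-inequality bound $\norm{\Delta_n}_{\B}\leq \sum_{\gri}\norm{D_{\gri}}_{\B}$ gives only $\norm{\Delta_n}_{\B,p}\lesssim n^{d-1}\norm{D_{\gr{0}}}_{\B,p}$, which after dividing by $n^{d/p}$ does not go to zero. One must genuinely exploit the orthomartingale (Burkholder) cancellation in the $d-1$ free directions of each face of the L-shape to bring the exponent down from $d-1$ to roughly $(d-1)/r$ (or $(d-1)/p$ with logs), which is where Theorem~\ref{thm:loi_forte_des_grands_nombres_orthomartingale} in lower dimension, or a direct iterated $r$-smooth martingale inequality, does the real work. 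Everything else — the commuting-filtration bookkeeping of step one and the Borel--Cantelli/truncation machinery of step three — is routine.
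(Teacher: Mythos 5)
Your step one is sound and is in fact the same embedding the paper uses: the shell sums $\Delta_n=S_{n\gr{1}}-S_{(n-1)\gr{1}}$ (the paper's $d_\ell=\sum_{\gri\in I_\ell}D_{\gri}$ with $I_\ell=\ens{\gri\smd\gr{1},\,\max\gri=\ell}$) form a martingale difference sequence for $\Gca_n=\Fca_{n\gr{1}}$. The genuine gap is in steps two and three. Under the sole hypothesis $\norm{D_{\gr{0}}}_{\B,p}<\infty$, the only moment bound available for a shell is $\norm{\Delta_n}_{\B,p}\leq C\,n^{(d-1)/p}\norm{D_{\gr{0}}}_{\B,p}$ (Corollary~\ref{cor:moments_max_ordre_r_orthomartingale_Banach} at exponent $p$ on the $d-1$ free directions of each face). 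Your ``crude'' bound $n^{(d-1)/r}\norm{D_{\gr{0}}}_{\B,p}$ is false in general: the $r$-smooth Burkholder inequality at exponent $r$ needs finite $r$-th moments (already for i.i.d.\ real variables with finite $p$-th but infinite $r$-th moment, the $\mathbb L^p$-norm of a sum of $m$ terms is of order $m^{1/p}$, not $m^{1/r}$), and invoking Theorem~\ref{thm:loi_forte_des_grands_nombres_orthomartingale} in dimension $d-1$ would require $\norm{D_{\gr{0}}}_{\B,p,d-2}<\infty$, which is not assumed. With the bound that is actually available, your step three does not close: Markov gives $\PP\pr{\norm{\Delta_n}_{\B}>\eps n^{d/p}}\lesssim n^{-1}$, which is not summable, and after truncating $\Delta_n$ itself at level $n^{d/p}$ the bounded part satisfies only $\E{\norm{\Delta_n}_{\B}^r\ind{\norm{\Delta_n}_{\B}\leq n^{d/p}}}\lesssim n^{dr/p-1}$, so each dyadic block contributes $O(1)$ after Doob's inequality. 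Working only with the $p$-th moments of the shell sums is intrinsically too lossy.

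What makes the argument work --- and what the paper does --- is to truncate the \emph{individual} variables $D_{\gri}$ at the block-dependent level $2^{Nd/p}$ and to restore the martingale difference property through the alternating projections $\sum_{J\subset\intent{1,d}}\pr{-1}^{\abs{J}}\E{D_{\gri}\ind{\norm{D_{\gri}}_{\B}\leq 2^{Nd/p}}\mid\Fca_{\gri-\ind{J}}}$. The truncated variables have finite $r$-th moments, so Doob's inequality plus Proposition~\ref{prop:moments_ordre_r_orthomartingale_Banach} handles the bounded part and Markov at order one the unbounded part; crucially, since the $D_{\gri}$ are identically distributed, the resulting bounds involve $\E{\norm{D_{\gr{0}}}_{\B}^r\ind{\norm{D_{\gr{0}}}_{\B}\leq 2^{Nd/p}}}$ and $\E{\norm{D_{\gr{0}}}_{\B}\ind{\norm{D_{\gr{0}}}_{\B}> 2^{Nd/p}}}$ with the \emph{same} random variable at every scale, and the series over $N$ telescope to $\norm{D_{\gr{0}}}_{\B,p}^p$ through the elementary inequalities \eqref{eq:series_queues_va_tronquee_carres} and \eqref{eq:series_queues_va_non_tronquee_carres}. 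Your scheme, which keeps only $\norm{\Delta_n}_{\B,p}$ and truncates the shells, discards precisely this structure and cannot be repaired without reintroducing it.
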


For convergence in $\el^p$, we have the following result.

\begin{Theorem}[Convergence in 
$\mathbb L^p$]\label{thm:loi_faible_des_grands_nombres_orthomartingale}

  Let $\pr{\B,\norm{\cdot}_{\B}}$ be a separable
  $r$-smooth Banach space for some $r\in
(1,2]$,  $1<p<r$ and $d\in\N$. If
$\pr{D_{\gri}}_{\gr{i}\in\Z^d}$ is an identically distributed
   orthomartingale difference random field such
  that  $\norm{D_{\gr{0}}}_{\B}\in\mathbb L^{p}$, then 
  \begin{equation}\label{eq:marcink_wlln}
  \lim_{\max\grN\to\infty}\frac 1{\abs{\grN}^{1/p}}\norm{\max_{\gr{1}\imd\grn\imd\grN}\norm{S_{\grn}  }_{\B}}_p=0.
  \end{equation}
\end{Theorem}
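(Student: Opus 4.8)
The plan is to deduce the $\mathbb L^p$ convergence in \eqref{eq:marcink_wlln} from the maximal-function control already established in Theorem~\ref{thm:loi_forte_des_grands_nombres_orthomartingale}, combined with a truncation argument that exploits the $\mathbb L^p$ (as opposed to $\mathbb L_{p,d-1}$) hypothesis. Write $M_{\grN}=\abs{\grN}^{-1/p}\max_{\gr{1}\imd\grn\imd\grN}\norm{S_{\grn}}_{\B}$. From \eqref{eq:control_weak_Lp_norm_maximum} applied to the full field we already know $M:=\sup_{\grn\smd\gr{1}}\norm{S_{\grn}}_{\B}/\abs{\grn}^{1/p}$ lies in weak-$\mathbb L^p$, hence $M_{\grN}\leq M$ is a uniformly tight, uniformly weak-$\mathbb L^p$-bounded family; by the almost sure convergence \eqref{eq:convergence_presque_sure} we have $M_{\grN}\to 0$ in probability as $\max\grN\to\infty$ (note $\max\grN\to\infty$ forces $\max\grn$ large on the "tail" part of the max, and the "head" part contributes $\abs{\grN}^{-1/p}\max_{\grn}\norm{S_{\grn}}_{\B}$ with a bounded set of $\grn$, which is also negligible since $\abs{\grN}\to\infty$). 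So the only missing ingredient is uniform integrability of $(M_{\grN}^p)$, which weak-$\mathbb L^p$ boundedness alone does not give — this is the main obstacle.

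To obtain uniform integrability I would use a truncation at level depending on a parameter. Fix $\lambda>0$ and split $D_{\gri}=D'_{\gri}+D''_{\gri}$ where $D'_{\gri}$ is a suitable martingale-compatible truncation of $D_{\gri}$ (for instance $D'_{\gri}=\mathbb E[D_{\gri}\1{\norm{D_{\gri}}_{\B}\leq\lambda}\mid\Fca_{\gri}]-\sum$ of the lower-index conditional expectations, arranged so that $(D'_{\gri})$ remains an orthomartingale difference random field; more robustly, one can truncate coordinate by coordinate as is standard for orthomartingales, or invoke the $m$-dependent/martingale decomposition machinery the paper develops). The bounded part $\norm{D'_{\gr{0}}}_{\B}\leq C\lambda$ belongs to $\mathbb L_{p,d-1}$ trivially, so Theorem~\ref{thm:loi_forte_des_grands_nombres_orthomartingale} applies to the partial sums $S'_{\grn}$ and gives, via \eqref{eq:control_weak_Lp_norm_maximum} together with \eqref{eq:convergence_presque_sure}, that $\abs{\grN}^{-1/p}\max_{\gr{1}\imd\grn\imd\grN}\norm{S'_{\grn}}_{\B}\to 0$ almost surely and, being bounded in $\mathbb L^{p'}$ for every $p'<r$ hence uniformly integrable after raising to the power $p$, converges to $0$ in $\mathbb L^p$. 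For the remainder part, I would bound
\begin{equation}
\frac 1{\abs{\grN}^{1/p}}\norm{\max_{\gr{1}\imd\grn\imd\grN}\norm{S''_{\grn}}_{\B}}_p
\leq C_{\B,d,p}\norm{D''_{\gr{0}}}_{\B,p,d-1}
\end{equation}
— wait, the wrong norm appears here, so instead I would bound the remainder directly by $\abs{\grN}^{-1/p}\sum_{\gr{1}\imd\gri\imd\grN}\norm{D''_{\gri}}_{\B}$ in $\mathbb L^p$, which by stationarity and the triangle inequality in $\mathbb L^p$ is at most $\abs{\grN}^{1-1/p}\norm{D''_{\gr{0}}}_{\B,p}$; this is not uniformly small. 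So the cruder split fails and one genuinely must apply the maximal inequality to the remainder as well.

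The correct route, then, is: apply the martingale maximal inequality of Theorem~\ref{thm:loi_forte_des_grands_nombres_orthomartingale} to \emph{both} $S'$ and $S''$, getting $\norm{M''_{\grN}}_{\B,p,w}\leq C_{\B,d,p}\norm{D''_{\gr{0}}}_{\B,p,d-1}$ where $M''_{\grN}$ is the normalized maximum of $\norm{S''_{\grn}}_{\B}$; since the $\log$-factor in $\varphi_{p,d-1}$ is harmless once we know $\norm{D''_{\gr{0}}}_{\B,p}$ is small, an interpolation/Hölder step upgrades this to an $\mathbb L^p$ bound that is small with $\lambda\to\infty$. More precisely: choose $p<p_1<r$; since $\norm{D'_{\gr{0}}}_{\B}$ is bounded, $(M'_{\grN})$ is bounded in $\mathbb L^{p_1}$, hence $(({M'_{\grN}})^p)$ is uniformly integrable; write $M_{\grN}\leq M'_{\grN}+M''_{\grN}$, so for any $\eps>0$ one has $\limsup_{\max\grN\to\infty}\norm{M_{\grN}}_p\leq \limsup\norm{M''_{\grN}}_p\leq C\,\psi(\lambda)$ where $\psi(\lambda)=\norm{D''_{\gr{0}}}_{\B,p}\to 0$ as $\lambda\to\infty$ by dominated convergence (using $\norm{D_{\gr{0}}}_{\B}\in\mathbb L^p$). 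Letting $\lambda\to\infty$ finishes the proof. The main obstacle is thus twofold: arranging the truncation so that $(D'_{\gri})$ and $(D''_{\gri})$ are still orthomartingale difference random fields with respect to completely commuting filtrations (this requires care, and is presumably where the bulk of the work lies, possibly reusing a decomposition lemma from the appendix), and controlling the weak-$\mathbb L^p$-to-$\mathbb L^p$ passage for the bounded part — but the latter is routine once boundedness gives an extra moment.
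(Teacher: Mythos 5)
Your overall strategy (truncate at a fixed level, re-project through $P_{\gri}$ so that both pieces remain orthomartingale difference random fields, show the bounded piece is negligible because it has higher moments, and make the remainder small by choosing the truncation level large) is the same as the paper's. The bounded part is fine: your uniform-integrability detour is more roundabout than necessary — the paper simply bounds $\abs{\grN}^{-1/p}\norm{\max_{\gr{1}\imd\grn\imd\grN}\norm{S'_{\grn}}_{\B}}_p$ by $C\,2^d\abs{\grN}^{1/r-1/p}\norm{D_{\gr{0}}\ind{\norm{D_{\gr{0}}}_{\B}\leq R}}_{\B,r}$, which tends to $0$ since $1/r<1/p$ — but your version can be made to work.

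The genuine gap is in your treatment of the remainder $D''$. You propose to apply Theorem~\ref{thm:loi_forte_des_grands_nombres_orthomartingale} to $S''$, which requires $\norm{D''_{\gr{0}}}_{\B,p,d-1}<\infty$; under the hypothesis $\norm{D_{\gr{0}}}_{\B}\in\mathbb L^p$ only, this quantity can be infinite, because $D''$ retains exactly the \emph{large} values of $D$ and removing the bounded part does nothing to cure a missing $\pr{\log}^{d-1}$ factor. Your claim that "the $\log$-factor in $\varphi_{p,d-1}$ is harmless once $\norm{D''_{\gr{0}}}_{\B,p}$ is small" and that an "interpolation/Hölder step" upgrades the weak-$\mathbb L^p$ bound to a small $\mathbb L^p$ bound is not substantiated and does not work: there is no endpoint estimate with only $\mathbb L^p$ data for the supremum over \emph{all} rectangles (indeed the $L\log^{d-1}L$ condition is essentially necessary for that), and weak-$\mathbb L^p$ control of $M''_{\grN}$ does not by itself give $\norm{M''_{\grN}}_p$ small. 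What actually closes the argument — and what the paper uses — is the maximal moment inequality of Corollary~\ref{cor:moments_max_ordre_r_orthomartingale_Banach} over the \emph{fixed} rectangle $\grN$ (Burkholder-type inequality in the $p$-smooth space plus iterated Doob, valid for every $1<p\leq r$ with no logarithmic loss), which yields
\begin{equation}
\frac 1{\abs{\grN}^{1/p}}\norm{\max_{\gr{1}\imd\grn\imd\grN}\norm{\sum_{\gr{1}\imd\gri\imd\grn}D''_{\gri}}_{\B}}_{p}
\leq C\pr{\B,d,p}\,\norm{D''_{\gr{0}}}_{\B,p}
\leq C\pr{\B,d,p}\,2^d\,\norm{D_{\gr{0}}\ind{\norm{D_{\gr{0}}}_{\B}>R}}_{\B,p},
\end{equation}
uniformly in $\grN$, and this is made smaller than $\eps/2$ by the choice of $R$. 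Replacing your appeal to the weak-type theorem (and the unexplained interpolation step) by this fixed-rectangle $\mathbb L^p$ maximal inequality is the missing ingredient; with it, your proof coincides with the paper's.
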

We point out that the convergence in \eqref{eq:marcink_wlln} is as $\max\grN\to\infty$, in other words, we require that only one of the coordinates of $\grN$ 
goes to infinity. Moreover, unlike in the case of almost sure convergence,  the consideration of squares instead of rectangles would not give a less restrictive condition.

\subsection{Orthomartingale approximation}\label{subsec:orthomart_approx}

In this section, we assume that the random field $\pr{X_{\gri}}_{\gri\in\Z^d}$ is of the form 
\begin{equation}\label{eq:setting}
X_{\gri}=X_{\gr{0}}\circ T^{\gri},\quad \Fca_{\gri}=T^{-\gri}\Fca_{\gr{0}}
\end{equation} where $T^{\gri}\colon\Omega\to\Omega$ is such that $T^{\gri}\circ T^{\grj}
=T^{\gri+\grj}$ for each $\gri,\grj\in\Z^d$.  

For example, one can consider the 
case where $\Omega=\R^{\Z^d}$ endowed with the product measure of
a probability measure $\mu$ and $T^{\gri}$ is the shift operator given by 
$T^{\gri}\pr{\pr{x_{\gr{k}}}_{k\in\Z^d}}=\pr{x_{\gr{k}+\gri}}_{k\in\Z^d}$.

An other example is the following: take probability spaces $\pr{\Omega_\ell, 
\Aca_\ell,\mu_\ell}$ and let $\Omega=\prod_{\ell=1}^d\Omega_\ell$ endowed with the product $\sigma$-algebra and the product measure $\mu$. Let $T_\ell\colon\Omega_\ell\to\Omega_\ell$ be a bijective bi-measure preserving map and let $\Fca_0^{\pr{\ell}}$ be a sub-$\sigma$-algebra of $\Aca_\ell$ such that
$T_\ell\Fca_0^{\pr{\ell}}\subset \Fca_0^{\pr{\ell}}$. Then define 
$\Fca_{\gri}:=\bigvee_{\ell=1}^dT_\ell^{i_{\ell}}\Fca_0^{\pr{\ell}}$.

For $\gri\in\Z^d$, we define the map $U^{\gri}$ by $U^{\gri}\pr{f}\pr{\omega}=f\pr{T^{\gri}\omega}$.
In order to extend the  results of Subsection~\ref{subsec:lln_ortho} to a 
larger class of stationary random 
fields, we define the projection operator 
\begin{equation}
P_{\gk}\pr{Y}:=\sum_{I\subset\intent{1,d}}
\pr{-1}^{\abs{I}}\E{Y\mid \Fca_{\gk-\ind{I}}},
\end{equation}
where $\abs{I}$ denotes the cardinality of $I$ and 
$\gk-\ind{I}=\pr{k_\ell-\ind{\ell\in I}}_{\ell=1}^d$.
When $d=1$, $P_k\pr{Y}=\E{Y\mid \Fca_k}-\E{Y\mid \Fca_{k-1}}$ 
and when $d=2$, 
\begin{equation}
P_{k_1,k_2}\pr{Y}=\E{Y\mid\Fca_{k_1,k_2}}-\E{Y\mid\Fca_{k_1-1,k_2}}
-\E{Y\mid\Fca_{k_1,k_2-1}}+\E{Y\mid\Fca_{k_1-1,k_2-1}}.
\end{equation}

The norm of  such projectors is used in order to measure how far 
a random field from an orthomartingale difference random field is. 
Indeed, if $\pr{D_{\gri}}_{\gri\in\Z^d}$ is an orthomartingale 
difference random field, then 
$P_{\gk}\pr{D_{\gr{0}}}=0$ if $\gk\neq \gr{0}$. Moreover, the fact that 
the filtration is defined with the help of the action $T$ gives, for $\gri$, 
$\gk\in\Z^d$, 
\begin{equation}\label{eq:stationarity_projecteurs}
\norm{P_{\gri+\gk}\pr{X_{\gri}}}_{p,q}=\norm{P_{\gk}\pr{X_{\gr{0}}}}_{p,q}.
\end{equation}

It is tempting to express $X_{\gri}$ as a sum of of projetors, namely, 
$X_{\gri}=\sum_{\gk\in\Z^d}P_{\gk}\pr{X_{\gri}}$, where the sum is understood 
as $\lim_{m\to\infty}\sum_{\gk\in\Z^d,\norm{\gk}_\infty\leq 
m}P_{\gk}\pr{X_{\gri}}$ and the limit in the sense of the 
$\norm{\cdot}_{\B,p,q}$ for some $p$ and $q$. Since for fixed $m$, the sum 
$\sum_{\gk\in\Z^d,\norm{\gk}_\infty\leq 
m}P_{\gk}\pr{X_{\gri}}$ is telescopic, only $2^m$ terms are remaining: one of 
them is $\E{X_{\gri}\mid\Fca_{m\gr{1}}}$ and the other one 
of the form $\E{X_{\gri}\mid \Fca_{m 
\gr{1_I}-\pr{m+1}\gr{1_{\intent{d}\setminus I}}  }}$ for some proper subset 
$I$ of $\intent{1,d}$, where $\gr{1_I}=\sum_{i\in I}\gr{e}_i$. In order to make 
their contribution negligible, we need the following assumptions:
\begin{equation}\label{eq:hyp_reg1}
 \lim_{m\to\infty}\norm{X_{\gr{0}}-\E{X_{\gr{0}}\mid\Fca_{m\gr{1}}}}_{\B,p,q}=0,
\end{equation}
\begin{equation}\label{eq:hyp_reg2}
\forall \ell_0\in\intent{1,d}, \lim_{m\to\infty}\norm{\E{X_{\gr{0}}  \mid 
\Fca_{m\sum_{\ell\in\intent{1,d}\setminus\ens{\ell_0}}\gr{e}_\ell -
m\gr{e}_{\ell_0}  } 
}}_{\B,p,q}=0. 
\end{equation}

The following results give a strong law and convergence in $\el^p$ for stationary random fields. 

\begin{Theorem}[Law of large numbers on 
rectangles]\label{thm:strong_law_large_stationary}
Let $\pr{X_{\gri}}_{\gri\in\Z^d}$ be a strictly stationary 
random field taking values in a separable $r$-smooth Banach space 
$\pr{\B,\norm{\cdot}_{\B}}$.
and let $\pr{\Fca_{\gri}}_{\gri\in\Z^d}$ be a commuting filtration such that 
\eqref{eq:setting} is satisfied. 
Suppose that \eqref{eq:hyp_reg1} and \eqref{eq:hyp_reg2} hold with $q=d-1$ and 
that 
\begin{equation}
\sum_{\gk\in\Z^d}\norm{P_{\gk}\pr{X_{\gr{0}}}}_{\B,p,d-1}<\infty.
\end{equation}
Then 
  \begin{equation}\label{eq:convergence_presque_sure_cas_stationnaire}
  \lim_{N\to \infty  }\sup_{\grn\smd\gr{1}, 
\max\grn \geq N}\frac 1{\abs{\grn}^{1/p}}\norm{\sum_{\gr{1}\imd\gri\imd\grn}X_{\gri} }  _{\B}
=0\mbox{
  almost surely.}
  \end{equation}
\end{Theorem}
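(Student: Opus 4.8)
The plan is to reduce the strictly stationary random field to the orthomartingale case already handled in Theorem~\ref{thm:loi_forte_des_grands_nombres_orthomartingale}. First I would construct the orthomartingale approximation: for a fixed integer $m\geq 1$, define the ``coboundary-free'' part
\begin{equation}
D_{\gri}^{(m)}:=\sum_{\gk\in\Z^d,\ \norm{\gk}_\infty\leq m}P_{\gk}\pr{X_{\gri}},
\end{equation}
and check that $\pr{D_{\gri}^{(m)}}_{\gri\in\Z^d}$ is (after an appropriate shift of indices) an orthomartingale difference random field with respect to the commuting filtration, using that $P_{\gk}\pr{X_{\gr 0}}=0$ when $\gk\not\smd\gr 0$ is not the right vanishing condition --- rather, the key identity is that applying $\E{\cdot\mid\Fca_{\gri-\gr e_\ell}}$ annihilates $P_{\gk}$ whenever $k_\ell$ is maximal in its allowed range, combined with the telescoping structure of the projections. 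By stationarity \eqref{eq:stationarity_projecteurs} the summability hypothesis $\sum_{\gk}\norm{P_{\gk}\pr{X_{\gr 0}}}_{\B,p,d-1}<\infty$ gives that $D_{\gri}^{(m)}$ converges in $\norm{\cdot}_{\B,p,d-1}$ as $m\to\infty$ to a limit $D_{\gri}$, which is again an orthomartingale difference random field, identically distributed, with $\norm{D_{\gr 0}}_{\B,p,d-1}<\infty$.

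Second, I would control the remainder $R_{\gri}:=X_{\gri}-D_{\gri}$. The point of assumptions \eqref{eq:hyp_reg1} and \eqref{eq:hyp_reg2} is exactly that the ``boundary'' terms left over from the telescoping --- namely $\E{X_{\gr 0}\mid\Fca_{m\gr 1}}$ and the terms $\E{X_{\gr 0}\mid\Fca_{m\sum_{\ell\neq\ell_0}\gr e_\ell-m\gr e_{\ell_0}}}$ --- go to $0$ in $\norm{\cdot}_{\B,p,d-1}$, so that $X_{\gr 0}=\sum_{\gk\in\Z^d}P_{\gk}\pr{X_{\gr 0}}=D_{\gr 0}$ in the sense of $\norm{\cdot}_{\B,p,d-1}$-convergence; more precisely one works with the finite-$m$ decomposition $X_{\gri}=D_{\gri}^{(m)}+(\text{the } 2^m \text{ conditional-expectation boundary terms})$ and writes each boundary term as a telescoping sum of coboundaries in the appropriate directions. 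A coboundary in direction $\ell$, say $U^{\gr e_\ell}g-g$, has partial sums over a rectangle $\intent{\gr 1,\grn}$ that collapse in the $\ell$-th coordinate to a difference of sums over the two opposite faces, each of which is a sum of only $\abs{\grn}/n_\ell$ terms; dividing by $\abs{\grn}^{1/p}$ and invoking Theorem~\ref{thm:loi_forte_grands_nombres_carres} or the $d-1$ dimensional version of Theorem~\ref{thm:loi_forte_des_grands_nombres_orthomartingale} applied to the faces, one shows that the maximal normalized partial sum of the coboundary contribution is negligible almost surely as $\max\grn\to\infty$. This is the standard Gordin--Voln\'y coboundary argument adapted to the multi-indexed setting.

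Third, I would assemble the pieces via an $\eps/3$-type argument. Fix $\eps>0$. Choose $m$ large enough that $\norm{X_{\gr 0}-D_{\gr 0}^{(m)}-(\text{boundary terms at scale }m)}$... in fact, choose $m$ so that the sum of the tails $\sum_{\norm{\gk}_\infty>m}\norm{P_{\gk}\pr{X_{\gr 0}}}_{\B,p,d-1}$ and the boundary norms in \eqref{eq:hyp_reg1}--\eqref{eq:hyp_reg2} are all below $\eps$. Then
\begin{equation}
\sup_{\grn\smd\gr 1,\ \max\grn\geq N}\frac{1}{\abs{\grn}^{1/p}}\norm{\sum_{\gr 1\imd\gri\imd\grn}X_{\gri}}_{\B}
\leq \sup_{\grn}\frac{\norm{\sum D_{\gri}}_{\B}}{\abs{\grn}^{1/p}}
+\sup_{\grn}\frac{\norm{\sum \pr{X_{\gri}-D_{\gri}^{(m)}-\cdots}}_{\B}}{\abs{\grn}^{1/p}}
+\sup_{\grn}\frac{\norm{\sum(\text{boundary cobdy terms})}_{\B}}{\abs{\grn}^{1/p}},
\end{equation}
where the first term tends to $0$ a.s.\ by Theorem~\ref{thm:loi_forte_des_grands_nombres_orthomartingale}, the third tends to $0$ a.s.\ by the coboundary argument of the previous paragraph, and the middle term is handled by the maximal inequality \eqref{eq:control_weak_Lp_norm_maximum} (applied to $X-D$, which need not be an orthomartingale, so one instead bounds it by $\sum_{\norm{\gk}_\infty>m}$ of the maximal functions of the individual $P_{\gk}$-martingale pieces, using that each $P_{\gk}\pr{X_{\gri}}$ generates an orthomartingale difference field in $\gri$) giving a weak-$L^p$ bound $\lesssim\eps$ uniformly in $N$; a Borel--Cantelli / monotonicity argument along $\eps=\eps_j\downarrow 0$ then upgrades this to almost sure convergence. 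The main obstacle I expect is the bookkeeping for the coboundary terms: verifying that the $2^m$ leftover conditional expectations genuinely decompose into coboundaries whose partial sums live on lower-dimensional faces, and that the resulting face-sums can be controlled by the already-established theorems with the correct power of $\log$ (here the exponent $d-1$ versus $d-2$ on the faces is what makes the argument close up), uniformly over the scale $m$ chosen in terms of $\eps$.
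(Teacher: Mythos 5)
Your first step contains a genuine error. The object $D_{\gri}^{(m)}=\sum_{\norm{\gk}_\infty\leq m}P_{\gk}\pr{X_{\gri}}$ (or its shifted version $\sum_{\norm{\gk}_\infty\leq m}P_{\gri+\gk}\pr{X_{\gri}}$) is \emph{not} an orthomartingale difference random field, for any shift of indices: in $d=1$ already, $\E{\sum_{\abs{k}\leq m}P_{i+k}\pr{X_i}\mid\Fca_{i+m-1}}=\sum_{k<m}P_{i+k}\pr{X_i}\neq 0$, since the projections at levels strictly below the conditioning level are left unchanged, not annihilated. Moreover your own conclusion is self-contradictory: you assert both that the $m\to\infty$ limit $D_{\gri}$ is an orthomartingale difference field and that $X_{\gr{0}}=\sum_{\gk}P_{\gk}\pr{X_{\gr{0}}}=D_{\gr{0}}$; if that were so, every random field satisfying the Hannan-type condition would itself be an orthomartingale difference field, which is false (the decomposition $X_{\gr{0}}=\sum_{\gk}P_{\gk}\pr{X_{\gr{0}}}$ is an identity reconstructing $X_{\gr{0}}$, not an approximation). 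The correct martingale object under Hannan's condition projects \emph{many} variables onto \emph{one} level ($\sum_{\grj}P_{\gr{0}}\pr{X_{\grj}}$), not one variable onto many levels; the paper avoids even this and instead keeps the finite-range field $X_{\gri}^{\pr{K}}$ of \eqref{eq:def_de_X_i^k} and invokes the Voln\'y--Wang decomposition \eqref{eq:decomposition_de_X_i^k}, which writes $X_{\gri}^{\pr{K}}$ as a sum over $I\subset\intent{1,d}$ of fields that are orthomartingale differences in the directions of $I$ and coboundaries $\pr{\Id-U^{\gr{e_\ell}}}$ in the remaining directions. Your handling of the tail $\sum_{\norm{\gk}_\infty>K}P_{\gri+\gk}\pr{X_{\gri}}$ via the triangle inequality for $\norm{\cdot}_{\B,p,w}$ and \eqref{eq:control_weak_Lp_norm_maximum}, and the final $\eps$-argument, do match the paper; but they rest on a decomposition you have not actually produced.

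The second gap is precisely the point you flag as ``bookkeeping'' and leave unresolved: the mixed terms $\emptyset\subsetneq I\subsetneq\intent{1,d}$ cannot be dispatched by quoting a $(d-1)$-dimensional version of Theorem~\ref{thm:loi_forte_des_grands_nombres_orthomartingale} on the faces. After the coboundary directions collapse, one is left with a maximum over roughly $\abs{\gr{2^{N''}}}$ shifted copies of an $\ell_0$-dimensional orthomartingale maximal function, while the normalization $\abs{\gr{2^N}}^{1/p}$ still involves all $d$ coordinates; a union bound costs the factor $\abs{\gr{2^{N''}}}$, and making the resulting double series converge is the actual work. The paper does this by truncating $D_{\gr{0}}^{\pr{\intent{1,\ell_0}}}$ at the level $\abs{\gr{2^N}}^{1/p}$, estimating the bounded part with the $r$-th moment inequality of Corollary~\ref{cor:moments_max_ordre_r_orthomartingale_Banach} and the unbounded part with Markov's inequality at first order, and then invoking the series estimates \eqref{eq:series_queues_va_tronquee_rectangles}, \eqref{eq:series_E_Y_ind_Y>} together with the counting bound \eqref{eq:card_duples_somme_k}; it is exactly here that the moment assumption $\el_{p,d-1}$ (with exponent $d-1$, not $d-2$) is consumed. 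Without this step your argument does not close.
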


The corresponding result for sums over squares reads as follows.
\begin{Theorem}[Law of large 
numbers on squares]\label{thm:strong_law_large_stationary_squares}
Let $\pr{X_{\gri}}_{\gri\in\Z^d}$ be a strictly stationary 
random field taking values in a separable $r$-smooth Banach space 
$\pr{\B,\norm{\cdot}_{\B}}$.
and let $\pr{\Fca_{\gri}}_{\gri\in\Z^d}$ be a commuting filtration such that 
\eqref{eq:setting} is satisfied. 
Suppose that \eqref{eq:hyp_reg1} and \eqref{eq:hyp_reg2} hold with $q=0$ and 
that 
\begin{equation}
\sum_{\gk\in\Z^d}\norm{P_{\gk}\pr{X_{\gr{0}}}}_{\B,p}<\infty.
\end{equation}
Then 
  \begin{equation} 
  \lim_{n\to \infty  } \frac 
1{n^{d/p}}\norm{\sum_{\gr{1}\imd\gri\imd n\gr{1}}X_{\gri} }  _{\B}
=0\mbox{
  almost surely.}
  \end{equation}
\end{Theorem}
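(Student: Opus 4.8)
The plan is to follow the same orthomartingale approximation scheme as for Theorem~\ref{thm:strong_law_large_stationary}, but feeding in the squares version of the maximal inequality, namely Theorem~\ref{thm:loi_forte_grands_nombres_carres}, in place of the rectangles version. First I would decompose $X_{\gr{0}}$ into its ``martingale part'' and a remainder. Set, for a truncation parameter $m$, the orthomartingale difference
\begin{equation*}
D_{\gr{0}}^{(m)}:=\sum_{\gk\in\Z^d,\ \norm{\gk}_\infty\leq m}P_{\gk}\pr{X_{\gr{0}}},
\end{equation*}
and define $D_{\gri}^{(m)}=D_{\gr{0}}^{(m)}\circ T^{\gri}$; by \eqref{eq:stationarity_projecteurs} and the fact that each $P_{\gk}$ produces a difference that is annihilated by the appropriate one-step-back conditional expectation, $(D_{\gri}^{(m)})_{\gri\in\Z^d}$ is a strictly stationary orthomartingale difference random field with $\norm{D_{\gr{0}}^{(m)}}_{\B}\in\mathbb L_p$ (a finite sum of terms with finite $\norm{\cdot}_{\B,p}$ norm). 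Write $R_{\gri}^{(m)}:=X_{\gri}-D_{\gri}^{(m)}$ for the remainder.

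Next, I would control the remainder. Because the partial sum over a square telescopes, $\sum_{\gr{1}\imd\gri\imd n\gr{1}}R_{\gri}^{(m)}$ can be written as a bounded number ($2^m$, independent of $n$) of terms each of which is a sum over a square of a random field of the form $\E{X_{\gr{0}}\mid\Fca_{\cdot}}\circ T^{\gri}$ with one of the ``frozen'' index shapes appearing in the discussion preceding \eqref{eq:hyp_reg1}--\eqref{eq:hyp_reg2}. Each such random field is \emph{itself} an orthomartingale difference random field? No --- more precisely, the telescoping leaves, after subtracting $D^{(m)}$, a difference of conditional expectations, and one applies Theorem~\ref{thm:loi_forte_grands_nombres_carres} to each resulting piece (which is a projection-type martingale difference) to get, for a constant $C=C_{\B,d,p}$,
\begin{equation*}
\norm{\sup_{n\geq 1}\frac{1}{n^{d/p}}\norm{\sum_{\gr{1}\imd\gri\imd n\gr{1}}R_{\gri}^{(m)}}_{\B}}_{p,w}
\leq C\pr{\norm{X_{\gr{0}}-\E{X_{\gr{0}}\mid\Fca_{m\gr{1}}}}_{\B,p}+\sum_{\ell_0=1}^d\norm{\E{X_{\gr{0}}\mid\Fca_{m\sum_{\ell\neq\ell_0}\gr{e}_\ell-m\gr{e}_{\ell_0}}}}_{\B,p}}=:C\,\rho_m,
\end{equation*}
where $\rho_m\to0$ as $m\to\infty$ by \eqref{eq:hyp_reg1} and \eqref{eq:hyp_reg2} with $q=0$. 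Simultaneously, applying \eqref{eq:controle_fonction_maximale_carres_orthomartingale} to $(D_{\gri}^{(m)})$ gives a uniform-in-$n$ weak-$\mathbb L^p$ bound on $n^{-d/p}\norm{S_{n\gr{1}}(D^{(m)})}_{\B}$, and the almost sure convergence to $0$ for each fixed $m$.

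Finally I would assemble the pieces by the standard $\limsup$ argument: for fixed $m$,
\begin{equation*}
\limsup_{n\to\infty}\frac{1}{n^{d/p}}\norm{\sum_{\gr{1}\imd\gri\imd n\gr{1}}X_{\gri}}_{\B}
\leq \limsup_{n\to\infty}\frac{1}{n^{d/p}}\norm{S_{n\gr{1}}(D^{(m)})}_{\B}+\sup_{n\geq1}\frac{1}{n^{d/p}}\norm{\sum_{\gr{1}\imd\gri\imd n\gr{1}}R_{\gri}^{(m)}}_{\B}=\sup_{n\geq1}\frac{1}{n^{d/p}}\norm{\sum_{\gr{1}\imd\gri\imd n\gr{1}}R_{\gri}^{(m)}}_{\B}\quad\text{a.s.},
\end{equation*}
the first term vanishing by Theorem~\ref{thm:loi_forte_grands_nombres_carres}. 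The left side does not depend on $m$; the right side has weak-$\mathbb L^p$ norm at most $C\rho_m\to0$, hence converges to $0$ in probability along $m$, so a subsequence converges a.s., forcing the (deterministic-in-$m$) left side to be $0$ almost surely. The main obstacle is the bookkeeping in the telescoping step: one must check carefully that, after removing $D^{(m)}$, each of the $O(2^m)$ leftover blocks is genuinely an (identically distributed) orthomartingale difference random field in the sense of Definition~\ref{dfn:def_orthomartingale} with respect to the commuting filtration --- equivalently that the relevant conditional expectations kill it in each coordinate direction --- so that Theorem~\ref{thm:loi_forte_grands_nombres_carres} actually applies to it; this uses the complete commutation of the filtration in an essential way, exactly as in the proof of Theorem~\ref{thm:strong_law_large_stationary}, and the only change is that moments of order $p$ (rather than the $\norm{\cdot}_{\B,p,d-1}$ norm) now suffice because we sum over squares.
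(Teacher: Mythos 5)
There is a genuine gap, and it sits in your treatment of the remainder. After the telescoping, what is left is not a coboundary in the summation variable $\gri$: the telescoping happens inside the single random variable $D^{(m)}_{\gr{0}}=\sum_{\norm{\gk}_\infty\leq m}P_{\gk}\pr{X_{\gr{0}}}$, so that $R^{(m)}_{\gr{0}}=X_{\gr{0}}-D^{(m)}_{\gr{0}}$ is a sum of $O(2^d)$ ``corner'' terms of the form $X_{\gr{0}}-\E{X_{\gr{0}}\mid\Fca_{m\gr{1}}}$ and $\E{X_{\gr{0}}\mid\Fca_{m\gr{1_I}-(m+1)\gr{1_{\intent{1,d}\setminus I}}}}$. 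The square partial sums $\sum_{\gr{1}\imd\gri\imd n\gr{1}}R^{(m)}_{\gri}$ therefore do \emph{not} collapse to a bounded number of boundary blocks, and the corner fields are plain conditional expectations, not (ortho)martingale differences in any coordinate direction, so Theorem~\ref{thm:loi_forte_grands_nombres_carres} cannot be applied to them. Consequently the claimed bound $\norm{\sup_n n^{-d/p}\norm{\sum_{\gr{1}\imd\gri\imd n\gr{1}}R^{(m)}_{\gri}}_{\B}}_{p,w}\leq C\rho_m$ has no justification; indeed no inequality of the form $\norm{\sup_n n^{-d/p}\norm{\sum_{\gr{1}\imd\gri\imd n\gr{1}}U^{\gri}Y}_{\B}}_{p,w}\leq C\norm{Y}_{\B,p}$ can hold for a general stationary field, since with $p>1$ the normalization $n^{d/p}$ is far below the worst-case growth $n^d$ (think of $U^{\gri}Y$ almost constant in $\gri$). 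A telltale sign is that your remainder estimate never uses the Hannan-type summability $\sum_{\gk}\norm{P_{\gk}\pr{X_{\gr{0}}}}_{\B,p}<\infty$, whereas in the paper this is exactly what makes the remainder small: the regularity conditions \eqref{eq:hyp_reg1}--\eqref{eq:hyp_reg2} serve only to identify the remainder with the tail series $\sum_{\norm{\gk}_\infty>K}U^{\gri}P_{\gk}\pr{X_{\gr{0}}}$, each summand of which \emph{is} an identically distributed orthomartingale difference field, so that the squares maximal inequality \eqref{eq:controle_fonction_maximale_carres_orthomartingale} applies termwise and the triangle inequality in $\norm{\cdot}_{p,w}$ produces the bound $C\sum_{\norm{\gk}_\infty>K}\norm{P_{\gk}\pr{X_{\gr{0}}}}_{\B,p}$, which is small by the assumed summability.

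A second, more easily repaired, error: $\pr{D^{(m)}_{\gri}}_{\gri\in\Z^d}$ is \emph{not} an orthomartingale difference random field. It is a finite sum of projection fields $\pr{U^{\gri}P_{\gk}\pr{X_{\gr{0}}}}_{\gri}$, each of which is a martingale difference with respect to a \emph{different} shifted filtration $\pr{\Fca_{\gri+\gk}}_{\gri}$; the sum is annihilated by no single one-step-back conditional expectation. (In $d=1$ take $X_0=\eps_0+\eps_1$ with $\Fca_i=\sigma(\eps_j,j\leq i)$: then $D^{(1)}_i=\eps_i+\eps_{i+1}$ has nonzero lag-one covariance, so it cannot be a martingale difference sequence for any filtration to which it is adapted.) This step can be fixed by applying Theorem~\ref{thm:loi_forte_grands_nombres_carres} separately to each of the finitely many fields $\pr{U^{\gri}P_{\gk}\pr{X_{\gr{0}}}}_{\gri}$, $\norm{\gk}_\infty\leq m$, and summing — the paper itself avoids the issue differently, by passing through the Voln\'y--Wang decomposition \eqref{eq:decomposition_de_X_i^k} of $X^{(K)}_{\gri}$ into an orthomartingale part and coboundary-type terms, the latter handled by a union bound over the boundary together with a truncation and Borel--Cantelli argument. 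So your overall architecture (martingale part plus remainder, maximal inequality on squares, $\limsup$ assembly) is the right one, but the two approximation steps as written do not apply the orthomartingale results to objects that actually are orthomartingale differences, and the remainder estimate in particular is a genuine gap.
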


A similar result can be formulated for the convergence in $\el^p$.

\begin{Theorem}[Convergence in $\mathbb L^p$]\label{thm:Lp_law_large_stationary}
Let $\pr{X_{\gri}}_{\gri\in\Z^d}$ be a strictly stationary 
random field taking values in a separable $r$-smooth Banach space 
$\pr{\B,\norm{\cdot}_{\B}}$, $1\leq p<r$,
and let $\pr{\Fca_{\gri}}_{\gri\in\Z^d}$ be a commuting filtration such that 
\eqref{eq:setting} is satisfied. 
Suppose that \eqref{eq:hyp_reg1} and \eqref{eq:hyp_reg2} hold with $q=0$ and 
that
\begin{equation}\label{eq:Hannan_el_p}
\sum_{\gk\in\Z^d}\norm{P_{\gk}\pr{X_{\gr{0}}}}_{\B,p}<\infty.
\end{equation}
Then 
  \begin{equation}\label{eq:Lp_law_large_stationary}
  \lim_{N\to \infty  }\sup_{\grn\smd\gr{1}, 
\max\grn \geq N}\frac 1{\abs{\grn}^{1/p}}\norm{\sum_{\gr{1}\imd 
  \gri\imd\grn}X_{\gri} }_{\B,p}=0.
  \end{equation}
\end{Theorem}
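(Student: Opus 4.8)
The plan is to run the orthomartingale-approximation scheme used for Theorem~\ref{thm:strong_law_large_stationary}, the only differences being that Theorem~\ref{thm:loi_faible_des_grands_nombres_orthomartingale} is invoked in place of Theorem~\ref{thm:loi_forte_des_grands_nombres_orthomartingale} and that the exponent $q=d-1$ is replaced by $q=0$ throughout. Writing $S_{\grn}=\sum_{\gr{1}\imd\gri\imd\grn}X_{\gri}$, I will actually prove the a priori stronger statement that $\abs{\grN}^{-1/p}\norm{\max_{\gr{1}\imd\grn\imd\grN}\norm{S_{\grn}}_{\B}}_p\to0$ as $\max\grN\to\infty$, which implies \eqref{eq:Lp_law_large_stationary} since $\norm{S_{\grn}}_{\B}\le\max_{\gr{1}\imd\grn\imd\grN}\norm{S_{\grn}}_{\B}$ whenever $\grn\imd\grN$.

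\emph{Step 1: decomposition along the projectors.} For $\gj\in\Z^d$ set $S_{\grn}^{(\gj)}:=\sum_{\gr{1}\imd\gri\imd\grn}U^{\gri}P_{\gj}\pr{X_{\gr{0}}}$. By \eqref{eq:Hannan_el_p} the series $\sum_{\gj\in\Z^d}P_{\gj}\pr{X_{\gr{0}}}$ converges absolutely in $\el^p\pr{\B}$, and its sum is $X_{\gr{0}}$: the symmetric partial sums $\sum_{\norm{\gj}_\infty\le m}P_{\gj}\pr{X_{\gr{0}}}$ telescope into $\sum_{J\subset\intent{1,d}}\pr{-1}^{\abs{J}}\E{X_{\gr{0}}\mid\Fca_{\gr{c}_J^{(m)}}}$, where $\gr{c}_J^{(m)}$ has $\ell$-th coordinate equal to $m$ when $\ell\notin J$ and to $-m-1$ when $\ell\in J$; the $J=\emptyset$ term converges to $X_{\gr{0}}$ by \eqref{eq:hyp_reg1} with $q=0$, while for $J\neq\emptyset$, choosing $\ell_0\in J$ one has $\gr{c}_J^{(m)}\imd m\sum_{\ell\neq\ell_0}\gr{e_\ell}-m\gr{e_{\ell_0}}$, so by the $\el^p\pr{\B}$-contractivity of conditional expectation and \eqref{eq:hyp_reg2} that term converges to $0$. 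Applying the $\el^p\pr{\B}$-isometry $U^{\gri}$ and summing over the finite set $\ens{\gri:\gr{1}\imd\gri\imd\grn}$ (the resulting double series being absolutely convergent by \eqref{eq:Hannan_el_p}) yields $S_{\grn}=\sum_{\gj\in\Z^d}S_{\grn}^{(\gj)}$ in $\el^p\pr{\B}$.

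\emph{Step 2: each term is an orthomartingale sum.} Fix $\gj$. The field $\pr{U^{\gri}P_{\gj}\pr{X_{\gr{0}}}}_{\gri\in\Z^d}=\pr{P_{\gj}\pr{X_{\gr{0}}}\circ T^{\gri}}_{\gri\in\Z^d}$ is strictly stationary and $\B$-valued, with $\norm{P_{\gj}\pr{X_{\gr{0}}}}_{\B}\in\el^p$ because $P_{\gj}\pr{X_{\gr{0}}}$ is a finite combination of conditional expectations of $X_{\gr{0}}\in\el^p\pr{\B}$; and it is an orthomartingale difference random field for the shifted — hence again completely commuting — filtration $\pr{\Fca_{\gj+\gri}}_{\gri\in\Z^d}$, since $U^{\gri}P_{\gj}\pr{X_{\gr{0}}}$ is $\Fca_{\gj+\gri}$-measurable and $\E{U^{\gri}P_{\gj}\pr{X_{\gr{0}}}\mid\Fca_{\gj+\gri-\gr{e_\ell}}}=U^{\gri}\E{P_{\gj}\pr{X_{\gr{0}}}\mid\Fca_{\gj-\gr{e_\ell}}}=0$, the vanishing following from \eqref{eq:def_filtration_commutante} by pairing, in the definition of $P_{\gj}$, each $I$ with $I\triangle\ens{\ell}$. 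Hence, by Theorem~\ref{thm:loi_faible_des_grands_nombres_orthomartingale} when $p>1$ (and by the $\el^1$ law of large numbers for stationary orthomartingale differences, applied coordinatewise, when $p=1$), for every $\gj$ one has $\abs{\grN}^{-1/p}\norm{\max_{\gr{1}\imd\grn\imd\grN}\norm{S_{\grn}^{(\gj)}}_{\B}}_p\to0$ as $\max\grN\to\infty$.

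\emph{Step 3: uniform tail and conclusion.} There is a constant $C_{\B,d,p}$ such that $\norm{\max_{\gr{1}\imd\grn\imd\grN}\norm{S_{\grn}^{(\gj)}}_{\B}}_p\le C_{\B,d,p}\abs{\grN}^{1/p}\norm{P_{\gj}\pr{X_{\gr{0}}}}_{\B,p}$ for every $\gj$ and $\grN$: for $p=1$ this is the triangle inequality (here $\abs{\grN}^{1/p}=\abs{\grN}$), and for $p>1$ it follows by applying coordinatewise Doob's maximal inequality together with the Burkholder--Gundy inequality valid in an $r$-smooth space at the exponent $p$ (recall $p<r$) to the one-dimensional martingales obtained by freezing the remaining coordinates. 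Since $\norm{\max_{\gr{1}\imd\grn\imd\grN}\norm{S_{\grn}}_{\B}}_p\le\sum_{\gj\in\Z^d}\norm{\max_{\gr{1}\imd\grn\imd\grN}\norm{S_{\grn}^{(\gj)}}_{\B}}_p$, splitting the sum at $\norm{\gj}_\infty=m$ and using Step~2 for the finitely many terms with $\norm{\gj}_\infty\le m$ gives, for each fixed $m$, $\limsup_{\max\grN\to\infty}\abs{\grN}^{-1/p}\norm{\max_{\gr{1}\imd\grn\imd\grN}\norm{S_{\grn}}_{\B}}_p\le C_{\B,d,p}\sum_{\norm{\gj}_\infty>m}\norm{P_{\gj}\pr{X_{\gr{0}}}}_{\B,p}$; letting $m\to\infty$, the right-hand side vanishes by \eqref{eq:Hannan_el_p}, which proves the claim. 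The main point requiring care is Step~1: it is precisely to annihilate the $2^d-1$ mixed boundary terms $\E{X_{\gr{0}}\mid\Fca_{\gr{c}_J^{(m)}}}$ with $J\neq\emptyset$ that hypothesis \eqref{eq:hyp_reg2} is imposed; the uniform bound of Step~3 is routine save for the fact that Doob's inequality is unavailable at $p=1$, which is harmless because the normalization there is $\abs{\grN}$ and the triangle inequality already does the job.
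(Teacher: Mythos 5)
Your argument is correct, and it shares the paper's skeleton only up to a point: both proofs expand $X_{\gri}$ along the projectors $P_{\gj}$, keep the finitely many $\gj$ with $\norm{\gj}_\infty\le m$, and kill the tail by combining the maximal moment bound of Corollary~\ref{cor:moments_max_ordre_r_orthomartingale_Banach} (your Doob--plus--smoothness bound is exactly that corollary) with \eqref{eq:Hannan_el_p}. Where you genuinely diverge is the treatment of the retained block: the paper passes through the Voln\'y--Wang orthomartingale--coboundary decomposition \eqref{eq:decomposition_de_X_i^k} of $X_{\gri}^{\pr{K}}$ and then handles each subset $I$ separately (Theorem~\ref{thm:loi_faible_des_grands_nombres_orthomartingale} for $I=\intent{1,d}$, truncation estimates for $I=\emptyset$ and for the mixed cases, including a uniform-integrability argument), whereas you apply Theorem~\ref{thm:loi_faible_des_grands_nombres_orthomartingale} directly to each individual field $\pr{U^{\gri}P_{\gj}\pr{X_{\gr{0}}}}_{\gri\in\Z^d}$, justified by the observation that this is an identically distributed orthomartingale difference field for the shifted, still completely commuting, filtration $\pr{\Fca_{\gj+\gri}}_{\gri}$ --- a fact the paper itself exploits in the proof of Theorem~\ref{thm:strong_law_large_stationary}. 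Your route buys a visibly shorter proof with no coboundary case analysis; the paper's heavier machinery is the one that carries over to the almost sure statements, where the dyadic Borel--Cantelli argument is run on $X^{\pr{K}}$ as a whole, but for the $\el^p$ statement your direct summation over projectors suffices. Two small points deserve care: (i) your Step~1 identification $X_{\gr{0}}=\sum_{\gj}P_{\gj}\pr{X_{\gr{0}}}$ is exactly where \eqref{eq:hyp_reg1} and \eqref{eq:hyp_reg2} enter, and you argue it correctly via the telescoped boundary terms and $\el^p$-contractivity; (ii) the case $p=1$, which the statement allows but Theorem~\ref{thm:loi_faible_des_grands_nombres_orthomartingale} excludes, is only sketched in your proposal ("applied coordinatewise"); it does go through by repeating the truncation proof of that theorem with the triangle inequality replacing the maximal inequality on the unbounded part, and it is worth noting that the paper's own proof is no more explicit on this point.
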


\section{Weak and strong law of large numbers for functions of independent Bernouilli shifts}\label{sec:LLN_shifts}

In this section, we will provide a Marcinkiewicz strong law of large numbers and convergence in $\mathbb{L}p\pr{\B}$ of normalized partials sums of random fields having the form 
\begin{equation}\label{eq:def_fonction_de_d_suites}
X_{\gri}=g\pr{ f_1\pr{ \pr{\eps^{\pr{1}}_{i_1-u_1}}_{u_1\in\Z} },\dots,
f_d\pr{ \pr{\eps^{\pr{d}}_{i_d-u_d}}_{u_d\in\Z} }   },
\end{equation}
where $g\colon\R^d\to\B$ is Hölder continuous with exponent $\alpha$, that is, there exists a constant $C$ such that for each $x_1,\dots,x_d,y_1,\dots,y_d\in\R$, 
\begin{equation}\label{eq:Holder_cont_g}
\norm{g\pr{x_1,\dots,x_d}-g\pr{y_1,\dots,y_d}}_{\B}\leq C\sum_{\ell=1}^d
\abs{x_\ell-y_\ell}^\alpha,
\end{equation}
$\pr{\eps^{\pr{\ell}}_{u_\ell}}_{u_\ell\in\Z}$ are mutually independent i.i.d.\ sequences 
and $f_1,\dots,f_d$ are measurable functions defined on $\R^{\Z^d}$ and taking values in $\R$.
 
The random fields defined via \eqref{eq:def_fonction_de_d_suites} 
are strictly stationary  and the filtration $\pr{\Fca_{\gri}}_{\gri\in\Z^d}$ given by 
\begin{equation}
\Fca_{\gri}=\sigma\pr{\eps_{u_1}^{\pr{1}},u_1\leq i_1,\dots,
\eps_{u_d}^{\pr{d}},u_d\leq i_d}
\end{equation}
is commuting. We would like to apply the results of Subsection~\ref{subsec:orthomart_approx}. However, the assumption 
that for each $\ell\in\intent{1,d}$ and $j_k$, $k\in\intent{1,d}\setminus\ens{\ell}$, 
$\E{X_{\gri}\mid \bigcap_{j_\ell\in\Z} \Fca_{\grj}}=0$ may not be satisfied in most cases. Indeed, let $\B=\R$,  $g\pr{x_1,x_2}=x_1+x_2$ , the maps $f_1,f_2$ are projections on the coordinate of index zero and $\pr{\eps_{u_1}^{\pr{1}}}_{u_1\in\Z}$, $\pr{\eps_{u_2}^{\pr{2}}}_{u_2\in\Z}$ are centered independent i.i.d.\ sequences, then $\E{X_{i_1,i_2}\mid \bigcap_{j_2\in\Z}
\Fca_{i_1,j_2}}=\eps_{i_1}^{\pr{1}}$. 

To overcome this problem, we define for a non-empty subset $J$ of $ \intent{1,d}$ the $\sigma$-algebra 
\begin{equation}
\Gca_J:=\sigma\pr{\eps^{\pr{j}}_{u_j},u_j\in\Z,j\in J},
\end{equation}
and $\Gca_{\emptyset}$ is the trivial $\sigma$-algebra. We then define 
\begin{equation}
X_{\gri}^I:=\sum_{J\subset I}\pr{-1}^{\abs{I}+\abs{J}}\E{X_{\gri}\mid\Gca_J}.
\end{equation}
Notice that for each $J\subset I$, $\E{X_{\gri}\mid\Gca_J}
=\E{X_{\sum_{\ell\in I}i_\ell\gr{e}_\ell}\mid\Gca_J}$ hence the coordinates of 
$\gri$ which do not belong to $I$ do not play any role. For this reason, we 
write
\begin{equation}\label{eq:def_X_gri_I_d_shifts}
 X_{\gri_{I}}^I=\sum_{J\subset I}\pr{-1}^{\abs{I}+\abs{J}}\E{X_{\gri}\mid\Gca_J}, \quad \gri_I=\sum_{\ell\in
I}i_\ell\gr{e}_\ell.
\end{equation}
Moreover, if $X_{\gr{0}}$ is such that $\norm{X_{\gr{0}}}_{\B,p,q}
<\infty$, then the random field $\pr{X_{\gri_I}^I}_{\gri_I\in\Z^I}$ 
satisfies \eqref{eq:hyp_reg1} and \eqref{eq:hyp_reg2} with the 
filtration $\pr{\Fca_{\gri}}_{\gri\in\Z^d}$ replaced by 
$\pr{\Fca_{\gri_I}}_{\gri_I\in\Z^I}$. Finally, notice that 
$X_{\gri}=\sum_{I\subset\intent{1,d}}X_{\gri}^I$.
Therefore, it is possible to apply  
Theorems~\ref{thm:strong_law_large_stationary}, 
\ref{thm:strong_law_large_stationary_squares} and 
\ref{thm:Lp_law_large_stationary} to each random field 
$\pr{X_{\gri_I}^I}_{\gri_I\in\Z^I}$ in order to derive a law of large numbers.

The appropriated normalization depends on an integer $d_0$ defined as follows: for some subset $I_0$ of cardinality $d_0$, $\norm{X_{\gr{0}}^{I_0}}_{\B,1}\neq 0$ 
and for each set $I$ of cardinality strictly smaller than $d_0$, one has 
$\norm{X_{\gr{0}}^{I}}_{\B,1}=0$. We then define for 
$\grn\in\pr{\N\setminus\ens{0}}^d$, 
\begin{equation}
 \pi_{d_0,p}\pr{\grn}=\max_{I:I\subset\intent{1,d},\card{I}=d_0}
 \prod_{\ell\in I}n_\ell^{1/p}\prod_{\ell'\in\intent{1,d}\setminus I}n_{\ell'}.
\end{equation}
When all the coordinates of $\grn$ are equal, say to $N$, one has 
$\pi_{d_0,p}\pr{\grn}=\pi_{d_0,p}\pr{N\gr{1}}=N^{d_0/p+d-d_0}$.

In the previous example, if $\norm{\eps_0^{\pr{1}}}_{\R,1}\neq 0$, one has $d_0=1$ 
and if $X_{i_1,i_2}=\eps_{i_1}^{\pr{1}}\eps_{i_2}^{\pr{2}}$ then $d_0=2$. In 
higher dimension, for a prescribed $D$, one can construct similar examples 
with products over $D$ coordinates in order to get $d_0=D$.
 
It turns out that we can formulate a condition in terms of the measure of  
physical dependence introduced in \cite{MR2172215}. We define for $i\in\Z$, 
\begin{equation}
 \delta^{\pr{\ell}}_{\B,p,q}\pr{i}:=
 \norm{f_\ell\pr{\pr{\eps^{\pr{\ell},*  }_{i-u_\ell} }_{u_\ell\in\Z}   
}- f_\ell\pr{\pr{\eps^{\pr{\ell}  }_{i-u_\ell} }_{u_\ell\in\Z}   
}}_{\B,p,q},
\end{equation}
where $\eps^{\pr{\ell},*  }_{i-u_\ell}=\eps^{\pr{\ell} }_{i-u_\ell}$ 
if $u_\ell\neq i$ and $\eps^{\pr{\ell},*  }_{0}$ is a random variable 
independent of the sequence $\pr{\eps^{\pr{\ell}  }_{ u}}_{u\in\Z}$ 
and has the same distribution as $\eps^{\pr{\ell}}_0$.

We are now in position to state a strong law of large numbers for 
random fields of the form \eqref{eq:def_fonction_de_d_suites}.

\begin{Theorem}[Law of large numbers 
on rectangles]\label{thm:loi_grands_nombres_d_shifts_rectangles}
 Let $\pr{X_{\gri}}_{\gri\in\Z^d}$ be a strictly stationary random field having 
the form \eqref{eq:def_fonction_de_d_suites} and let $\alpha\in (0,1]$ 
satisfying \eqref{eq:Holder_cont_g}. Let $1/\alpha<p<r$. Suppose that
\begin{equation}\label{eq:d_shifts_hype_dep_p_d-1}
 \sum_{i\in\Z} 
\abs{i}^{d-1}\pr{\delta^{\pr{\ell}}_{\B,p\alpha,d-1}\pr{i}}^\alpha
<\infty.
\end{equation}
Then 
  \begin{equation}\label{eq:convergence_presque_sure_cas_d_shifts}
  \lim_{N\to \infty  }\sup_{\grn\smd\gr{1}, 
\max\grn \geq N}\frac 
1{\pi_{d_0,p}\pr{\grn}}\norm{\sum_{\gr{1}\imd\gri\imd\grn}X_{\gri} }  _{\B}
=0\mbox{
  almost surely.}
  \end{equation}
\end{Theorem}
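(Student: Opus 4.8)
The plan is to reduce the statement for the random field $\pr{X_{\gri}}_{\gri\in\Z^d}$ to the already-established Theorem~\ref{thm:strong_law_large_stationary} by means of the decomposition $X_{\gri}=\sum_{I\subset\intent{1,d}}X_{\gri_I}^I$ described just before the statement, and then to control each piece $\pr{X_{\gri_I}^I}_{\gri_I\in\Z^I}$ via the physical dependence coefficients. First I would fix a subset $I\subset\intent{1,d}$ and observe that by construction $X_{\gri_I}^I$ depends only on the coordinates $\pr{\eps^{\pr{\ell}}_{u}}$ for $\ell\in I$, so that $\pr{X_{\gri_I}^I}_{\gri_I\in\Z^I}$ is a strictly stationary random field over $\Z^I$ (a copy of $\Z^{\card I}$) adapted to the commuting filtration $\pr{\Fca_{\gri_I}}_{\gri_I\in\Z^I}$. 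The regularity assumptions \eqref{eq:hyp_reg1} and \eqref{eq:hyp_reg2} with $q=\card I-1$ hold for this sub-field (this is asserted in the text preceding the statement), so the only hypothesis of Theorem~\ref{thm:strong_law_large_stationary} left to verify is the Hannan-type summability $\sum_{\gk\in\Z^I}\norm{P_{\gk}\pr{X_{\gr{0}_I}^I}}_{\B,p,\card I-1}<\infty$.

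The main work is to bound $\norm{P_{\gk}\pr{X_{\gr{0}_I}^I}}_{\B,p,\card I-1}$ in terms of the coefficients $\delta^{\pr{\ell}}_{\B,p\alpha,d-1}$. Here I would use the product structure: because the sequences $\pr{\eps^{\pr{\ell}}}_{\ell\in I}$ are mutually independent and $g$ is Hölder with exponent $\alpha$ in each variable separately (by \eqref{eq:Holder_cont_g}), a coupling argument replacing one innovation $\eps^{\pr{\ell}}_{k_\ell}$ at a time shows that $\norm{P_{\gk}\pr{X_{\gr{0}}}}_{\B,p,q}$ is controlled by a product $\prod_{\ell\in I}\pr{\delta^{\pr{\ell}}_{\B,p\alpha,q}\pr{k_\ell}}^{\alpha}$ up to multiplicative constants depending on $C$, $d$ and $p$; the $\alpha$-power and the change of moment order from $p$ to $p\alpha$ come precisely from applying the Hölder inequality \eqref{eq:Holder_cont_g} inside the $\norm{\cdot}_{\B,p,q}$ norm and using $\norm{\abs{Y}^{\alpha}}_{p,q}=\norm{Y}_{p\alpha, q}^{\alpha}$ up to constants. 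Passing from $P_{\gk}\pr{X_{\gr{0}}}$ to $P_{\gk}\pr{X_{\gr{0}_I}^I}$ only discards the terms with $\gk$ having a nonzero coordinate outside $I$, so the sum over $\gk\in\Z^I$ of $\norm{P_{\gk}\pr{X_{\gr{0}_I}^I}}_{\B,p,\card I-1}$ factorizes (after using $q=\card I-1\leq d-1$, so $\norm{\cdot}_{p\alpha,\card I-1}\leq \norm{\cdot}_{p\alpha,d-1}$) into $\prod_{\ell\in I}\sum_{k\in\Z}\pr{\delta^{\pr{\ell}}_{\B,p\alpha,d-1}\pr{k}}^{\alpha}$, and one must still absorb the weight $\abs{k}^{\card I-1}$ coming from the $\log$ in $\varphi_{p\alpha,\card I-1}$; here I would distribute the single power $\abs{k}^{\card I -1}$ across the $\card I$ factors or, more simply, bound each factor's sum by $\sum_{k\in\Z}\abs{k}^{d-1}\pr{\delta^{\pr{\ell}}_{\B,p\alpha,d-1}\pr{k}}^{\alpha}$, which is finite by \eqref{eq:d_shifts_hype_dep_p_d-1}. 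Thus the Hannan condition holds for every $I$.

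Applying Theorem~\ref{thm:strong_law_large_stationary} to each $\pr{X_{\gri_I}^I}_{\gri_I\in\Z^I}$ gives, for each $I$, almost sure convergence to $0$ of $\abs{\grn_I}^{-1/p}\norm{\sum_{\gr{1}\imd\gri\imd\grn}X_{\gri}^I}_{\B}$ as $\max_{\ell\in I}n_\ell\to\infty$, where $\grn_I=\prod_{\ell\in I}n_\ell$; for the coordinates outside $I$ the sum is just $\abs{\prod_{\ell\notin I}n_\ell}$ copies of the $\Z^I$-sum, contributing a factor $\prod_{\ell\notin I}n_\ell$. Dividing instead by $\pi_{d_0,p}\pr{\grn}$ and recalling that $\pi_{d_0,p}\pr{\grn}\geq \prod_{\ell\in I}n_\ell^{1/p}\prod_{\ell\notin I}n_\ell$ for every $I$ with $\card I=d_0$, while for $\card I<d_0$ one has $X_{\gr{0}}^I=0$ hence that piece vanishes identically, and for $\card I>d_0$ the stronger normalization $\prod_{\ell\in I}n_\ell^{1/p}\prod_{\ell\notin I}n_\ell$ dominates $\pi_{d_0,p}\pr{\grn}$ for large $\max\grn$ so that piece also goes to $0$, I conclude that $\pi_{d_0,p}\pr{\grn}^{-1}\norm{\sum_{\gr{1}\imd\gri\imd\grn}X_{\gri}}_{\B}\to 0$ almost surely as $\max\grn\to\infty$. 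The main obstacle I anticipate is the bookkeeping in the second step: getting the coupling bound on $\norm{P_{\gk}\pr{X_{\gr{0}}}}_{\B,p,q}$ to come out as a clean product over $\ell\in I$ of $\alpha$-powers of $\delta^{\pr{\ell}}$ while correctly tracking the $\abs{k}^{d-1}$ weights and the passage from moment order $p$ to $p\alpha$, since the projector $P_{\gk}$ mixes all $d$ conditional expectations at once and the separate Hölder continuity of $g$ must be invoked one coordinate at a time.
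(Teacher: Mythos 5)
Your overall architecture is the same as the paper's: decompose $X_{\gri}=\sum_{I}X_{\gri_I}^I$, absorb the coordinates outside $I$ into the factor $\prod_{\ell\notin I}n_\ell\leq \pi_{d_0,p}\pr{\grn}/\prod_{\ell\in I}n_\ell^{1/p}$, and apply Theorem~\ref{thm:strong_law_large_stationary} to each piece after checking the Hannan condition. The gap is in the only step that carries real content, namely the bound on $\norm{P_{\gk}\pr{X_{\gr{0}}}}_{\B,p,q}$. You claim a \emph{product} bound $\norm{P_{\gk}\pr{X_{\gr{0}}}}_{\B,p,q}\lesssim \prod_{\ell\in I}\pr{\delta^{\pr{\ell}}_{\B,p\alpha,q}\pr{k_\ell}}^{\alpha}$ obtained by "replacing one innovation at a time". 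This does not follow from \eqref{eq:Holder_cont_g}: after one coupling step the increment $X_{\gr{0}}-X_{\gr{0}}^{\pr{\gk,\ell}}$ has no Hölder structure left in the remaining variables, so the iteration cannot gain a second factor; what you would need is a bound on \emph{mixed} differences of $g$ (of the type $\abs{x_1-x_1'}^{\alpha}\abs{x_2-x_2'}^{\alpha}$), which is a strictly stronger hypothesis. In fact the product bound is false in general: take $d=2$, $\alpha=1$, $g\pr{x_1,x_2}=\abs{x_1+x_2}$, $f_\ell$ the coordinate projections and $\eps^{\pr{\ell}}_0=\lambda\,\eta^{\pr{\ell}}$ with $\eta^{\pr{\ell}}$ Rademacher; then $\norm{P_{\gr{0}}\pr{X_{\gr{0}}}}_{p}=\lambda$, while $\prod_{\ell=1,2}\delta^{\pr{\ell}}_{\B,p}\pr{0}$ is of order $\lambda^2$, so no inequality of product type can hold as $\lambda\to 0$.

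What \eqref{eq:Holder_cont_g} actually yields — and what the paper proves — is a \emph{minimum} bound: since $P_{\gk}$ is a composition of one-dimensional projectors each of norm at most $2$, and since $P_{k_\ell\gr{e}_\ell}$ annihilates the coupled variable $X_{\gr{0}}^{\pr{\gk,\ell}}$, one gets $\norm{P_{\gk}\pr{X_{\gr{0}}}}_{\B,p,q}\leq C\min_{1\leq\ell\leq d}\pr{\delta^{\pr{\ell}}_{\B,p\alpha,q}\pr{k_\ell}}^{\alpha}$. Summing a minimum over $\gk\in\Z^d$ is then the nontrivial point: the paper rearranges the coefficients into a non-increasing sequence and bounds the sum over $\ens{i_1,\dots,i_{d-1}\leq i_d}$ by $\sum_{i}i^{d-1}b_i$, which is exactly where the weight $\abs{i}^{d-1}$ in hypothesis \eqref{eq:d_shifts_hype_dep_p_d-1} is consumed. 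Your explanation of that weight (as absorbing the logarithm in $\varphi_{p\alpha,\card{I}-1}$) is also off: the logarithmic correction is already built into the coefficient $\delta^{\pr{\ell}}_{\B,p\alpha,d-1}$, and with your (unjustified) product bound the weighted condition would not even be needed, since $\sum_{\gk\in\Z^d}\prod_\ell\pr{\delta^{\pr{\ell}}\pr{k_\ell}}^{\alpha}$ factorizes. So the proof as proposed does not go through; replacing the product bound by the minimum bound plus the rearrangement/counting argument repairs it and recovers the paper's proof.
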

\begin{Theorem}[Law of large numbers on 
squares]\label{thm:loi_grands_nombres_d_shifts_carres}
 Let $\pr{X_{\gri}}_{\gri\in\Z^d}$ be a strictly stationary random field having 
the form \eqref{eq:def_fonction_de_d_suites} and let $\alpha\in (0,1]$ 
satisfying \eqref{eq:Holder_cont_g}. Let $1/\alpha<p<r$. Suppose that for each
$\ell\in\intent{1,d}$, 
\begin{equation} 
 \sum_{i\in\Z} \abs{i}^{d-1}\pr{\delta^{\pr{\ell}}_{\B,p\alpha}\pr{i}}^\alpha 
<\infty.
\end{equation}
Then   \begin{equation}\label{eq:convergence_presque_sure_cas_fonction_d_shifts}
  \lim_{n\to \infty  } \frac 
1{n^{d_0/p+d-d_0}}\norm{\sum_{\gr{1}\imd\gri\imd n\gr{1}}X_{\gri} }  _{\B}
=0\mbox{
  almost surely}.
  \end{equation}
\end{Theorem}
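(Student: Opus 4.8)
The plan is to reduce the statement to the already proved law of large numbers on squares for stationary random fields (Theorem~\ref{thm:strong_law_large_stationary_squares}), applied to each of the pieces $\pr{X_{\gri_I}^I}_{\gri_I\in\Z^I}$ coming from the decomposition $X_{\gri}=\sum_{I\subset\intent{1,d}}X_{\gri}^I$. For a fixed $I\subset\intent{1,d}$, the random field $\pr{X_{\gri_I}^I}_{\gri_I\in\Z^I}$ is a strictly stationary $\B$-valued random field indexed by $\Z^I\cong\Z^{\card I}$, adapted to the commuting filtration $\pr{\Fca_{\gri_I}}_{\gri_I\in\Z^I}$, and by the discussion preceding the theorem it satisfies the regularity conditions \eqref{eq:hyp_reg1} and \eqref{eq:hyp_reg2} with $q=0$ (using $\card I$ in place of $d$). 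So the remaining task is purely to verify the Hannan-type summability condition
\begin{equation*}
\sum_{\gk\in\Z^I}\norm{P_{\gk}\pr{X_{\gr{0}_I}^I}}_{\B,p}<\infty
\end{equation*}
for each $I$, and then to assemble the $2^d$ resulting convergences, with the correct normalization $\pi_{d_0,p}\pr{\grn}$ dictated by $d_0$.

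First I would bound the projection norms of $X_{\gri}^I$ in terms of the physical dependence coefficients. Because the innovation sequences $\pr{\eps^{\pr{\ell}}}_{\ell}$ are mutually independent and $g$ is Hölder with exponent $\alpha$, a coupling argument (replacing one coordinate $\eps^{\pr{\ell}}_{j}$ by an independent copy) together with \eqref{eq:Holder_cont_g} shows that the $\norm{\cdot}_{\B,p}$-norm of the projection of $X_{\gr{0}}$ along direction $\ell$ at lag $i$ is controlled by a constant times $\pr{\delta^{\pr{\ell}}_{\B,p\alpha}\pr{i}}^{\alpha}$; here the passage from $p$ to $p\alpha$ and the exponent $\alpha$ come from applying the Hölder inequality $\norm{g(X)-g(Y)}_{\B}\leq C\sum_\ell\abs{X_\ell-Y_\ell}^\alpha$ inside an $\el^p$ norm and using $\norm{\,\abs{Z}^\alpha\,}_p=\norm{Z}_{p\alpha}^\alpha$. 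Since $P_{\gk}$ for a multi-index $\gk$ supported on $I$ factors (by the product structure of the filtration) into one-dimensional projections, one direction at a time, $\norm{P_{\gk}\pr{X_{\gr{0}_I}^I}}_{\B,p}$ is bounded by a product $\prod_{\ell\in I}\pr{\delta^{\pr{\ell}}_{\B,p\alpha}\pr{k_\ell}}^{\alpha}$ up to multiplicative constants. Summing over $\gk\in\Z^I$ then factors as $\prod_{\ell\in I}\sum_{k\in\Z}\pr{\delta^{\pr{\ell}}_{\B,p\alpha}\pr{k}}^{\alpha}$, which is finite because the hypothesis $\sum_{i}\abs{i}^{d-1}\pr{\delta^{\pr{\ell}}_{\B,p\alpha}\pr{i}}^{\alpha}<\infty$ is in particular (with $d\geq 1$) summability of $\pr{\delta^{\pr{\ell}}_{\B,p\alpha}\pr{i}}^{\alpha}$. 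Theorem~\ref{thm:strong_law_large_stationary_squares} then yields
\begin{equation*}
\lim_{n\to\infty}\frac{1}{n^{\card I/p}}\norm{\sum_{\gr{1}\imd\gri_I\imd n\gr{1_I}}X_{\gri_I}^I}_{\B}=0\quad\text{a.s.}
\end{equation*}
Writing $\sum_{\gr{1}\imd\gri\imd n\gr{1}}X_{\gri}=\sum_{I\subset\intent{1,d}}n^{d-\card I}\sum_{\gr{1}\imd\gri_I\imd n\gr{1_I}}X_{\gri_I}^I$ and dividing by $n^{d_0/p+d-d_0}$, each term of index $I$ is of order $n^{d-\card I}\cdot o\pr{n^{\card I/p}}/n^{d_0/p+d-d_0}=o\pr{n^{\card I/p-\card I-d_0/p+d_0}}$; this exponent is $\pr{\card I-d_0}\pr{1/p-1}$, which is $\leq 0$ exactly when $\card I\geq d_0$ (since $1/p<1$), and for $\card I<d_0$ the corresponding $X_{\gri_I}^I$ vanishes in $\el^1$, hence is a.s.\ $0$, by the very definition of $d_0$. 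Summing the $2^d$ contributions gives \eqref{eq:convergence_presque_sure_cas_fonction_d_shifts}.

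The main obstacle, I expect, is the bookkeeping in the projection estimate: one must make precise that $P_{\gk}$ restricted to functions measurable with respect to $\Gca_{\intent{1,d}}$ and with the $I$-reduced filtration factorizes into a tensor product of one-dimensional martingale projections — this uses the mutual independence of the $\pr{\eps^{\pr{\ell}}}$ crucially — and that the one-dimensional physical-dependence bound $\norm{P_k^{(\ell)}(X)}\lesssim \delta^{\pr{\ell}}(k)$ survives being raised to the $\alpha$ power through Hölder continuity of $g$ in several variables simultaneously. Handling the telescoping/cross terms so that a change in one innovation only affects the corresponding one-dimensional factor, while the other coordinates are held fixed, is where the argument needs care; once that factorization is in place the summability and the normalization computation are routine.
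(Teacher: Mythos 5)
Your overall skeleton --- decomposing $X_{\gri}=\sum_{I\subset\intent{1,d}}X^{I}_{\gri_I}$, applying Theorem~\ref{thm:strong_law_large_stationary_squares} to each piece indexed by $\Z^I$, and the normalization arithmetic showing the exponent $\pr{\card I-d_0}\pr{1/p-1}\leq 0$ for $\card I\geq d_0$ while the pieces with $\card I<d_0$ vanish --- is exactly the paper's. The gap is in the key estimate you yourself flag as the main obstacle: the claimed factorization $\norm{P_{\gk}\pr{X^{I}_{\gr{0}}}}_{\B,p}\lesssim\prod_{\ell\in I}\pr{\delta^{\pr{\ell}}_{\B,p\alpha}\pr{k_\ell}}^{\alpha}$ is not justified by H\"older continuity of $g$ and is false in general. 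The coupling argument kills the projection in one direction at a time: writing $X_{\gr{0}}^{\pr{\grk,\ell}}$ for the variable obtained by replacing $\eps^{\pr{\ell}}_{k_\ell}$ with an independent copy, one has $P_{k_\ell\gr{e}_\ell}\pr{X_{\gr{0}}^{\pr{\grk,\ell}}}=0$, and since the projections in the other directions are merely bounded operators in $\el^p$, the best available conclusion is $\norm{P_{\gk}\pr{X_{\gr{0}}}}_{\B,p}\leq 2^{d}\min_{\ell}\norm{X_{\gr{0}}-X_{\gr{0}}^{\pr{\grk,\ell}}}_{\B,p}\lesssim\min_{\ell}\pr{\delta^{\pr{\ell}}_{\B,p\alpha}\pr{k_\ell}}^{\alpha}$. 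Upgrading this minimum to a product would require control of mixed (iterated) differences of $g$, i.e.\ that changing two innovations in two different directions perturbs $X_{\gr{0}}$ by the product of the two individual increments; a H\"older or even Lipschitz $g$ such as $g\pr{x_1,x_2}=\abs{x_1-x_2}$ only yields the minimum. Consequently your summation step also collapses: $\sum_{\grk\in\Z^d}\min_{\ell}a_{k_\ell}$ does not follow from plain summability $\sum_i a_i<\infty$; it requires roughly $\sum_i\abs{i}^{d-1}a_i<\infty$, which is exactly why the theorem's hypothesis carries the weight $\abs{i}^{d-1}$ even in the squares case --- a weight your argument dismisses as merely implying summability.

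The paper's proof differs precisely here: it establishes $\norm{P_{\gk}\pr{X_{\gr{0}}}}_{\B,p,q}\leq 2C\kappa_{\alpha,p,q}\min_{\ell}\pr{\delta^{\pr{\ell}}_{\B,p\alpha,q}\pr{k_\ell}}^{\alpha}$, then sums this minimum over $\Z^d$ by passing to a non-increasing rearrangement $\pr{b_i}$ of the coefficients and using $\sum_{0\leq i_1,\dots,i_{d-1}\leq i_d}\min_{\ell}b_{i_\ell}\leq b_{i_d}\,i_d^{d-1}$, which is where the weighted condition enters; a further short computation transfers the Hannan condition from $X_{\gr{0}}$ to each piece via $\norm{P_{\grk}\pr{X^{\intent{1,\ell_0}}_{\gr{0}}}}_{\B,p,q}\leq\norm{P_{k_1,\dots,k_{\ell_0},0,\dots,0}\pr{X_{\gr{0}}}}_{\B,p,q}$, a step you gloss over but which is easy. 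If you replace the product bound by the minimum bound and insert the rearrangement argument, the rest of your plan goes through as written.
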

\begin{Theorem}[Convergence in 
$\mathbb{L}^p$]\label{thm:loi_grands_nombres_d_shifts_conv_Lp}
 Let $\pr{X_{\gri}}_{\gri\in\Z^d}$ be a strictly stationary random field having 
the form \eqref{eq:def_fonction_de_d_suites} and let $\alpha\in (0,1]$ 
satisfying \eqref{eq:Holder_cont_g}. Let $1/\alpha<p<r$. Suppose that for each
$\ell\in\intent{1,d}$, 
\begin{equation}\label{eq:d_shifts_hype_dep_p}
 \sum_{i\in\Z} \abs{i}^{d-1}\pr{\delta^{\pr{\ell}}_{\B,p\alpha}\pr{i}}^\alpha 
<\infty.
\end{equation}
Then    
   \begin{equation}\label{eq:Lp_law_large_stationary_d_shifts}
  \lim_{N\to \infty  }\sup_{\grn\smd\gr{1}, 
\max\grn \geq N}\frac 1{\pi_{d_0,p}\pr{\grn} }\norm{\sum_{\gr{1}\imd 
  \gri\imd\grn}X_{\gri} }_{\B,p}=0.
  \end{equation}
\end{Theorem}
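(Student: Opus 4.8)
The plan is to reduce the statement to Theorem~\ref{thm:Lp_law_large_stationary} applied to each of the $2^d$ random fields $\pr{X_{\gri_I}^I}_{\gri_I\in\Z^I}$ coming from the decomposition $X_{\gri}=\sum_{I\subset\intent{1,d}}X_{\gri}^I$. Since $X_{\gri_I}^I$ depends only on the $\abs{I}$ coordinates indexed by $I$ and is a strictly stationary random field on $\Z^I$ adapted to the commuting filtration $\pr{\Fca_{\gri_I}}_{\gri_I\in\Z^I}$, and since (as recalled in the text) it satisfies \eqref{eq:hyp_reg1} and \eqref{eq:hyp_reg2} with $q=0$ as soon as $\norm{X_{\gr{0}}}_{\B,p}<\infty$, the only thing that needs to be verified is the Hannan-type condition $\sum_{\gk\in\Z^I}\norm{P_{\gk}\pr{X_{\gr{0}_I}^I}}_{\B,p}<\infty$ in dimension $\abs{I}$. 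First I would establish the moment bound $\norm{X_{\gr{0}}}_{\B,p}<\infty$: by \eqref{eq:Holder_cont_g} and the mutual independence of the $d$ sequences, $\norm{X_{\gr{0}}-\E{X_{\gr{0}}}}_{\B,p}$ is controlled by $\sum_{\ell=1}^d\norm{f_\ell(\cdot)-\E{f_\ell(\cdot)}}_{p\alpha}^\alpha$ (Jensen with exponent $\alpha\le 1$ and independence), and the latter is finite because $\sum_i\pr{\delta^{\pr{\ell}}_{\B,p\alpha}(i)}^\alpha<\infty$ forces each $\delta^{\pr{\ell}}_{\B,p\alpha}(i)$ to be finite, hence $f_\ell\pr{(\eps^{\pr{\ell}}_{-u})_{u}}\in\mathbb L^{p\alpha}$.

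The heart of the argument is to bound the projection norms in terms of the physical-dependence coefficients. For a fixed nonempty $I$ and $\gk\in\Z^I$, $P_{\gk}\pr{X_{\gr{0}_I}^I}$ is an alternating sum of conditional expectations with respect to the $\sigma$-algebras $\Fca_{\gr{0}_I-\ind{J}}$, $J\subset I$; because the underlying innovations split as a product over $\ell\in I$ and $X^I$ already has the cross-structure built in, the projection factorizes into a product (over $\ell\in I$) of one-dimensional projections acting on $f_\ell$, each of which is a martingale difference $\E{f_\ell\mid\Fca^{\pr{\ell}}_{k_\ell}}-\E{f_\ell\mid\Fca^{\pr{\ell}}_{k_\ell-1}}$ of the $k_\ell$-th coordinate. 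I would then invoke the standard estimate bounding the $\mathbb L^{p\alpha}$-norm of such a one-dimensional projection of $f_\ell$ by (a constant times) $\delta^{\pr{\ell}}_{\B,p\alpha}(k_\ell)$ — this is the classical comparison between Wu's projection operators and the physical-dependence measure — and lift it through the Hölder exponent $\alpha$ via Jensen. Multiplying over $\ell\in I$ and using independence to turn the $\mathbb L^p$-norm of the product into the product of $\mathbb L^{p}$-norms (equivalently, $\mathbb L^{p\alpha}$-norms raised to the power $\alpha$ of the individual factors), one gets
\begin{equation}
\norm{P_{\gk}\pr{X_{\gr{0}_I}^I}}_{\B,p}\leq C\prod_{\ell\in I}\pr{\delta^{\pr{\ell}}_{\B,p\alpha}(k_\ell)}^\alpha,
\end{equation}
so that $\sum_{\gk\in\Z^I}\norm{P_{\gk}\pr{X_{\gr{0}_I}^I}}_{\B,p}\leq C\prod_{\ell\in I}\sum_{i\in\Z}\pr{\delta^{\pr{\ell}}_{\B,p\alpha}(i)}^\alpha<\infty$ by \eqref{eq:d_shifts_hype_dep_p} (here the extra weight $\abs{i}^{d-1}$ in the hypothesis is more than enough, since for the $\mathbb L^p$-convergence statement no logarithmic correction is needed).

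With the Hannan condition in hand, Theorem~\ref{thm:Lp_law_large_stationary} yields, for each $I$, $\abs{\grn_I}^{-1/p}\norm{\sum_{\gr{1}_I\imd\gri_I\imd\grn_I}X_{\gri_I}^I}_{\B,p}\to 0$ as $\max\grn_I\to\infty$, where $\grn_I=(n_\ell)_{\ell\in I}$; for $I$ with $\abs{I}<d_0$ the sum is identically $0$ in $\mathbb L^1$ by the definition of $d_0$ (and one checks it is in fact centered, so the contribution vanishes, or one simply notes $\norm{X_{\gr{0}}^I}_{\B,1}=0$ makes the partial sums trivially negligible after centering). Summing the triangle inequality over all $I\subset\intent{1,d}$ and comparing the normalizations — for $\abs{I}=d_0$ one has $\abs{\grn_I}^{1/p}\prod_{\ell\notin I}n_\ell\leq\pi_{d_0,p}(\grn)$ by definition of $\pi_{d_0,p}$, and for $\abs{I}>d_0$ the normalization $\abs{\grn_I}^{1/p}\prod_{\ell\notin I}n_\ell$ is asymptotically negligible compared to $\pi_{d_0,p}(\grn)$ along any sequence with $\max\grn\to\infty$ — gives \eqref{eq:Lp_law_large_stationary_d_shifts}. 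The main obstacle I anticipate is the bookkeeping for the low-order terms ($\abs{I}<d_0$) and making the normalization comparison uniform over the region $\{\grn\smd\gr{1},\ \max\grn\ge N\}$ rather than merely along sequences where all coordinates tend to infinity; the factorization of the projection operator and its comparison with $\delta^{\pr{\ell}}$ are essentially routine once the product structure of the filtration is exploited.
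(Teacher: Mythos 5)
Your overall architecture coincides with the paper's: decompose $X_{\gri}=\sum_{I}X^{I}_{\gri_I}$, verify the Hannan-type condition $\sum_{\gk}\norm{P_{\gk}\pr{X^{I}_{\gr{0}}}}_{\B,p}<\infty$ so that Theorem~\ref{thm:Lp_law_large_stationary} applies to each field $\pr{X^{I}_{\gri_I}}_{\gri_I\in\Z^I}$, then compare $\prod_{\ell\in I}n_\ell^{1/p}\prod_{\ell\notin I}n_\ell$ with $\pi_{d_0,p}\pr{\grn}$. The gap is in your key estimate. You assert that $P_{\gk}\pr{X^{I}_{\gr{0}}}$ factorizes into a product over $\ell\in I$ of one-dimensional projections of the individual $f_\ell$'s, and deduce $\norm{P_{\gk}\pr{X^{I}_{\gr{0}}}}_{\B,p}\leq C\prod_{\ell\in I}\pr{\delta^{\pr{\ell}}_{\B,p\alpha}\pr{k_\ell}}^{\alpha}$. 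The operator $P_{\gk}$ is indeed a composition of commuting one-dimensional projection operators, but the resulting random variable is not a product of projections of the separate $f_\ell$'s unless $g$ itself has a multiplicative structure; the Hölder condition \eqref{eq:Holder_cont_g} controls the effect of changing a single innovation but gives no product-type control of mixed (second and higher order) differences, and for a merely Hölder $g$ (think of a Lipschitz function with a kink, composed with two independent linear forms) the product bound is genuinely false. What Hölder continuity does give, and what the paper uses, is, up to constants, $\norm{P_{\gk}\pr{X_{\gr{0}}}}_{\B,p}\leq \min_{1\leq\ell\leq d}\norm{P_{k_\ell\gr{e}_\ell}\pr{X_{\gr{0}}}}_{\B,p}\leq C\min_{1\leq\ell\leq d}\pr{\delta^{\pr{\ell}}_{\B,p\alpha}\pr{k_\ell}}^{\alpha}$: a minimum, not a product, since the further projections can only be handled through the boundedness of each projector. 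This changes the summability problem completely: one must show $\sum_{\gk\in\Z^d}\min_{\ell}a_{k_\ell}<\infty$ with $a_k=\max_\ell\pr{\delta^{\pr{\ell}}_{\B,p\alpha}\pr{k}}^{\alpha}$, and this is exactly where the weight $\abs{i}^{d-1}$ in \eqref{eq:d_shifts_hype_dep_p} is consumed (rearrange $\pr{a_k}$ into a non-increasing sequence $\pr{b_i}$ and bound the sum over $\ens{0\leq i_1,\dots,i_{d-1}\leq i_d}$ by $i_d^{d-1}b_{i_d}$). Your parenthetical remark that the weight $\abs{i}^{d-1}$ is ``more than enough'' because no logarithmic correction is needed in $\el^p$ is the symptom of the misdiagnosis: with only the min bound available, the unweighted condition $\sum_i\pr{\delta^{\pr{\ell}}_{\B,p\alpha}\pr{i}}^{\alpha}<\infty$ would not suffice, so the product bound cannot be waved through as routine.

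A secondary omission: even with the correct one-dimensional bound, one still has to pass from the projections of $X_{\gr{0}}$ to those of $X^{I}_{\gr{0}}$. The paper does this by showing that in the alternating sum defining $P_{\gk}\pr{X^{\intent{1,\ell_0}}_{\gr{0}}}$ only the term $J=\intent{1,\ell_0}$ survives, which yields $\norm{P_{\gk}\pr{X^{\intent{1,\ell_0}}_{\gr{0}}}}_{\B,p}\leq\norm{P_{\pr{k_1,\dots,k_{\ell_0},0,\dots,0}}\pr{X_{\gr{0}}}}_{\B,p}$; your factorization claim silently absorbs this step. The remaining ingredients of your proposal (vanishing of the terms with $\card{I}<d_0$ since $\norm{X^{I}_{\gr{0}}}_{\B,1}=0$, the comparison of normalizations, the appeal to Theorem~\ref{thm:Lp_law_large_stationary}) do match the paper's argument, and the uniformity issue over $\ens{\grn\smd\gr{1},\max\grn\geq N}$ that you flag at the end is not specific to your proof.
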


\section{Weak and strong law of large numbers for functions of independent 
random fields}\label{sec:lgn_Bernoulli random fields}

In this Section, we study the case where $\pr{X_{\gri}}_{\gri\in\Z^d}$ has the form 
\begin{equation}\label{eq:representation_Bernoulli_random_field}
X_{\gri}=f\pr{\pr{\eps_{\gri-\gk}}_{\gk\in\Z^d}},
\end{equation}
where $f\colon \R^{\Z^d}\to\B$ is measurable, $\pr{\B,\norm{\cdot}_{\B}}$ is a 
separable $r$-smooth Banach space and $\pr{\eps_{\gk}}_{\gk\in\Z^d}$ is an \iid 
random field.
An approach via approximation by $m$-dependent random fields can be done. In 
order to quantify this dependence, we will use the natural extension of the 
physical dependence measure to random fields, which is defined as 
\begin{equation} 
\delta_{\B,p,q}\pr{\gri}:=\norm{f\pr{\pr{\eps^*_{\gri-\gk}}_{\gk\in\Z^d}}-f\pr{
\pr {\eps_{\gri-\gk}}_{\gk\in\Z^d}}}_{\B,p, q }, \quad \gri\in\Z^d,
\end{equation}
where $\eps^*_{\gr{u}}=\eps_{\gru}$ if $\gr{u}\neq\gr{0}$
and $\eps^*_{\gr{0}}=\eps'_{\gr{0}}$, where $\eps'_{\gr{0}}$ 
is independent of $\pr{\eps_{\gru}}_{\gru\in\Z^d}$ and has the same 
distribution as $\eps_{\gr{0}}$.

For $\gri\in\Z^d$, we denote by $\norm{\gri}_{\infty}$ 
the quantity $\max_{\ell\in\intent{1,d}}\abs{i_\ell}$.
We are now in position to state a strong law of large numbers for random fields of the form \eqref{eq:representation_Bernoulli_random_field}.
\begin{Theorem}[Law of large numbers 
on rectangles]\label{thm:loi_grands_nombres_champs_bern_rect_ps}
Let $\pr{\B,\norm{\cdot}_{\B}}$ be a separable $r$-smooth Banach space and let 
$1<p< r$.
Let $\pr{X_{\gri}}_{\gri\in\Z^d}$ be a centered random field admitting the 
representation \eqref{eq:representation_Bernoulli_random_field}.
Suppose that 
 \begin{equation}
  \sum_{k=1}^\infty k^{d\pr{1-1/p}}\pr{ 
\sum_{\gri:\norm{\gri}_\infty=k}\pr{
\delta_{\B,p,d-1}\pr{\gri}}^p }^{1/p}<\infty.
 \end{equation}
 Then 
 \begin{equation}
 \lim_{N\to\infty}\sup_{\grn\smd\gr{1},\max\grn \geq N}\frac 1{\abs{\grn}^{1/p}}
\norm{ \sum_{\gr{1}\imd\gr{i}\imd \grn}X_{\gri}  
}_{\B}=0 \mbox{ a.s.}.
\end{equation}
\end{Theorem}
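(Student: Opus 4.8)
The strategy is the classical $m$-dependent approximation combined with the orthomartingale law of large numbers on rectangles (Theorem~\ref{thm:strong_law_large_stationary}), applied to the coboundary decomposition. First I would produce, for each $m\geq 1$, the truncated random field $X_{\gri}^{(m)}:=\E{X_{\gri}\mid \sigma(\eps_{\gri-\gk},\norm{\gk}_\infty\leq m)}$, which is $m$-dependent and strictly stationary, and bound the tail $\norm{X_{\gr{0}}-X_{\gr{0}}^{(m)}}_{\B,p,d-1}$ by a quantity governed by $\sum_{\norm{\gri}_\infty>m}\delta_{\B,p,d-1}(\gri)$; the summability hypothesis (a fortiori) makes this tend to $0$. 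The aim is then to show that each $X_{\gri}^{(m)}$ falls in the scope of Theorem~\ref{thm:strong_law_large_stationary} and that the error introduced by the approximation is uniformly controlled.

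The main step is to verify the Hannan-type condition $\sum_{\gk\in\Z^d}\norm{P_{\gk}(X_{\gr{0}})}_{\B,p,d-1}<\infty$ for the field \eqref{eq:representation_Bernoulli_random_field} together with the regularity conditions \eqref{eq:hyp_reg1} and \eqref{eq:hyp_reg2} with $q=d-1$. Here the filtration is $\Fca_{\gri}=\sigma(\eps_{\gru},\gru\imd\gri)$, which is completely commuting by Proposition~\ref{prop:exemple_filtra_commutantes}(1). The point is to compare $P_{\gk}(X_{\gr{0}})$ with the physical dependence measure: a standard coupling argument (replacing the innovations indexed outside the relevant half-spaces by independent copies) gives, for $\gk\in\Z^d$ with, say, $\norm{\gk}_\infty=k$ and all coordinates nonnegative, a bound of the shape $\norm{P_{\gk}(X_{\gr{0}})}_{\B,p,d-1}\leq C\sum_{\gri:\norm{\gri}_\infty=k}\delta_{\B,p,d-1}(\gri)$ (up to combinatorial constants coming from the $2^d$ terms in $P_{\gk}$, and taking care of the various sign patterns of the coordinates of $\gk$). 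Summing over $\gk$, grouping by $\norm{\gk}_\infty=k$ and using that there are $O(k^{d-1})$ such $\gk$ for each $k$, one gets
\begin{equation*}
\sum_{\gk\in\Z^d}\norm{P_{\gk}(X_{\gr{0}})}_{\B,p,d-1}
\leq C\sum_{k=1}^\infty k^{d-1}\sum_{\gri:\norm{\gri}_\infty=k}\delta_{\B,p,d-1}(\gri),
\end{equation*}
which is finite as soon as $\sum_{k\geq1}k^{d(1-1/p)}\bigl(\sum_{\norm{\gri}_\infty=k}\delta_{\B,p,d-1}(\gri)^p\bigr)^{1/p}<\infty$ — indeed, since $d(1-1/p)\geq d-1$ fails in general, one must instead estimate $\sum_{\norm{\gri}_\infty=k}\delta_{\B,p,d-1}(\gri)$ by Hölder's inequality against the $\ell^p$-norm, producing the factor $k^{(d-1)(1-1/p)}$; combined with the $k^{d-1}$ from counting one needs a weighted bound, and this is exactly where the exponent $d(1-1/p)$ in the hypothesis is calibrated. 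Conditions \eqref{eq:hyp_reg1} and \eqref{eq:hyp_reg2} hold because, $\gr{0}$ being centered and the innovations independent, $\E{X_{\gr{0}}\mid\Fca_{m\gr{1}}}\to X_{\gr{0}}$ in $\norm{\cdot}_{\B,p,d-1}$ by martingale convergence (using $\norm{X_{\gr{0}}}_{\B,p,d-1}<\infty$, itself a consequence of the summability of the $\delta$'s and $X_{\gr{0}}=\sum_{\gk}P_{\gk}(X_{\gr{0}})$), and the conditional expectations in \eqref{eq:hyp_reg2} involve conditioning on a $\sigma$-algebra that shrinks to the trivial one along one coordinate, so they converge to $\E{X_{\gr{0}}}=0$.

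Once the Hannan condition and the regularity conditions are in place, Theorem~\ref{thm:strong_law_large_stationary} directly yields \eqref{eq:convergence_presque_sure_cas_stationnaire}, i.e.\ $\abs{\grn}^{-1/p}\norm{\sum_{\gr{1}\imd\gri\imd\grn}X_{\gri}}_{\B}\to0$ a.s.\ along $\max\grn\to\infty$, which is the claimed conclusion; no further $m$-dependent approximation is actually needed since the Hannan condition is verified directly for $X$ itself. The main obstacle is the careful bookkeeping in the comparison $\norm{P_{\gk}(X_{\gr{0}})}_{\B,p,d-1}\lesssim\sum_{\norm{\gri}_\infty=\norm{\gk}_\infty}\delta_{\B,p,d-1}(\gri)$ — one must handle all sign patterns of $\gk$, verify that the coupling used to bound a difference of conditional expectations genuinely reproduces the physical dependence coefficients in the $\norm{\cdot}_{\B,p,d-1}$ norm (which requires the subadditivity/triangle inequality properties of $\norm{\cdot}_{p,d-1}$ established for the $\varphi_{p,q}$ functional), and then check that the resulting series converges under precisely the stated hypothesis, invoking Hölder's inequality to pass from the $\ell^1$-sum over a sphere to the $\ell^p$-sum appearing in the assumption.
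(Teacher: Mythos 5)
Your argument ultimately rests on verifying Hannan's condition $\sum_{\gk\in\Z^d}\norm{P_{\gk}\pr{X_{\gr{0}}}}_{\B,p,d-1}<\infty$ directly for the Bernoulli field, via the comparison $\norm{P_{\gk}\pr{X_{\gr{0}}}}_{\B,p,d-1}\leq C\sum_{\gri:\norm{\gri}_\infty=\norm{\gk}_\infty}\delta_{\B,p,d-1}\pr{\gri}$, and this comparison is false as soon as $d\geq 2$. The projection $P_{\gk}$ does not only feel innovations on the shell of radius $\norm{\gk}_\infty$: take $d=2$, centered real innovations, $M>k\geq 1$, and $X_{\gr{0}}=\eps_{\pr{k,-M}}\eps_{\pr{-M,k}}$. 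Then $\E{X_{\gr{0}}\mid\Fca_{\pr{k,k}}}=X_{\gr{0}}$ while the three other conditional expectations occurring in $P_{\pr{k,k}}$ vanish, so $P_{\pr{k,k}}\pr{X_{\gr{0}}}=X_{\gr{0}}\neq 0$, although every nonzero physical dependence coefficient of this functional lives on the shell $\norm{\gri}_\infty=M>k$. The familiar one-dimensional bound $\norm{P_k\pr{X_0}}\lesssim\delta\pr{k}$ does not survive in higher dimension because $\Fca_{\gk}$ and $\Fca_{\gk-\gr{e}_\ell}$ differ by a whole hyperplane of innovations, not by a single site; the best general substitute is $\norm{P_{\gk}\pr{X_{\gr{0}}}}_{\B,p,d-1}\leq 2\min_{\ell}\norm{X_{\gr{0}}-X_{\gr{0}}^{\pr{\ell,k_\ell}}}_{\B,p,d-1}$, where the whole hyperplane $\ens{u_\ell=k_\ell}$ is resampled, and telescoping it leads to weighted $\ell^1$ conditions on the $\delta$'s that are not implied by the stated $\ell^p$-aggregated hypothesis. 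Moreover, even granting your inequality, your own bookkeeping does not close: summing it over the $\asymp k^{d-1}$ indices $\gk$ with $\norm{\gk}_\infty=k$ and applying H\"older on the shell gives weights $k^{d-1+(d-1)(1-1/p)}=k^{(d-1)(2-1/p)}$, which the hypothesis (weight $k^{d(1-1/p)}$) does not control; the sentence ``this is exactly where the exponent is calibrated'' hides a genuine mismatch. (Your treatment of \eqref{eq:hyp_reg2} is also too quick: the $\sigma$-algebras there are not monotone in $m$, so one cannot simply invoke martingale convergence, though this can be repaired by approximating $X_{\gr{0}}$ by functionals of finitely many innovations.)

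The exponent $d\pr{1-1/p}$ in the hypothesis actually comes from the mechanism you sketched in your first paragraph and then set aside. The paper writes $X_{\gri}=Y_{\gri,M-1}+\sum_{m\geq M}X_{\gri,m}$ with $Y_{\gri,m}=\E{X_{\gri}\mid\sigma\pr{\eps_{\gk}:\norm{\gk-\gri}_\infty\leq m}}$ and $X_{\gri,m}=Y_{\gri,m}-Y_{\gri,m-1}$. The finite-range part satisfies Hannan's condition trivially ($P_{\gk}\pr{Y_{\gr{0},M-1}}=0$ for $\norm{\gk}_\infty\geq M$) and is covered by Theorem~\ref{thm:strong_law_large_stationary}. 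Each shell field $\pr{X_{\gri,m}}_{\gri\in\Z^d}$ is split, via \eqref{eq:decomposition_champ_m_dependent}, into the $\pr{2m+1}^d$ residue classes modulo $2m+1$, which are i.i.d.\ fields; the weak-$\el^p$ maximal bound \eqref{eq:control_weak_Lp_norm_maximum} of Theorem~\ref{thm:loi_forte_des_grands_nombres_orthomartingale} applies to each class, the blocking costs exactly a factor $m^{d\pr{1-1/p}}$, and Lemma~\ref{lem:norme_Orlicz_X_0m} bounds $\norm{X_{\gr{0},m}}_{\B,p,d-1}$ by $\pr{\sum_{\norm{\gri}_\infty=m}\pr{\delta_{\B,p,d-1}\pr{\gri}}^p}^{1/p}$, so that the series over $m$ of weak-norm bounds converges precisely under the stated hypothesis; one then concludes with the triangle inequality for $\norm{\cdot}_{p,w}$. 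You would need to adopt this (or an equivalent $m$-dependent approximation) route, since the direct Hannan verification as you propose it breaks at its key step.
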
 
 \begin{Theorem}[Law of large numbers 
on squares]\label{thm:loi_grands_nombres_champs_bern_carres}
 Let $\pr{\B,\norm{\cdot}_{\B}}$ be a separable $r$-smooth Banach space and let 
$1<p< r$.
Let $\pr{X_{\gri}}_{\gri\in\Z^d}$ be a centered random field admitting the 
representation \eqref{eq:representation_Bernoulli_random_field}.
Suppose that 
 \begin{equation}\label{eq:cond_cas_bern_carres}
  \sum_{k=1}^\infty k^{d\pr{1-1/p}}\pr{ 
\sum_{\gri:\norm{\gri}_\infty=k}\pr{\delta_{\B,p}\pr{\gri}}^p }^{1/p}<\infty.
 \end{equation}
Then 
\begin{equation}
 \lim_{n\to\infty}\frac 1{n^{d/p}}\norm{\sum_{\gr{1}\imd\gri\imd 
n\gr{1}}X_{\gri}  }_{\B}=0 \mbox{ a.s.. }
\end{equation}
\end{Theorem}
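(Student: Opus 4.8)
\textbf{Proof plan for Theorem~\ref{thm:loi_grands_nombres_champs_bern_carres}.}

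The plan is to reduce the statement to an application of Theorem~\ref{thm:strong_law_large_stationary_squares} via the orthomartingale-decomposition machinery of Subsection~\ref{subsec:orthomart_approx}, exactly as one expects to do for the rectangular case in Theorem~\ref{thm:loi_grands_nombres_champs_bern_rect_ps}. First I would fix the natural commuting filtration $\Fca_{\gri}=\sigma\pr{\eps_{\gru}:\gru\imd\gri}$ generated by the \iid field $\pr{\eps_{\gk}}_{\gk\in\Z^d}$, which is completely commuting by Proposition~\ref{prop:exemple_filtra_commutantes}(1), and observe that the representation \eqref{eq:representation_Bernoulli_random_field} puts us in the framework \eqref{eq:setting} with $T$ the shift on $\R^{\Z^d}$. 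The bulk of the work is to translate the hypothesis \eqref{eq:cond_cas_bern_carres}, which is phrased in terms of the physical dependence measures $\delta_{\B,p}\pr{\gri}$, into the projection-operator condition $\sum_{\gk\in\Z^d}\norm{P_{\gk}\pr{X_{\gr{0}}}}_{\B,p}<\infty$ together with the regularity conditions \eqref{eq:hyp_reg1} and \eqref{eq:hyp_reg2} with $q=0$.

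For the comparison between the $P_{\gk}$-norms and the $\delta$-coefficients, I would argue as follows. Since $X_{\gr{0}}=f\pr{\pr{\eps_{-\gk}}_{\gk\in\Z^d}}$ is $\Fca_{\gr{0}}$-measurable and belongs to $\el^p\pr{\B}$ (finiteness of the left side of \eqref{eq:cond_cas_bern_carres} forces $\delta_{\B,p}\pr{\gr{0}}<\infty$, hence $X_{\gr{0}}\in\el^p\pr{\B}$ because $\eps'_{\gr{0}}$ can be coupled), a coupling argument for each conditional expectation $\E{X_{\gr{0}}\mid\Fca_{\gr{0}-\ind{I}}}$ replaces the coordinates $\eps_{\gru}$ with $\gru\not\imd \gr{0}-\ind{I}$ by independent copies; the telescoping alternating sum defining $P_{\gk}\pr{X_{\gr{0}}}$ then only sees the innovation $\eps_{-\gk}$, and a standard estimate (as in \cite{MR2172215}, and as surely established in an appendix lemma here) gives a bound of the form $\norm{P_{\gk}\pr{X_{\gr{0}}}}_{\B,p}\leq C\,\delta_{\B,p}\pr{\gk}$ up to possibly summing several $\delta$-terms indexed by the coordinate shells. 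Summing over the shells $\norm{\gk}_\infty=k$ by Minkowski's inequality (in the $p\geq 1$ regime; note $p>1/\alpha\geq 1$ is not in play here, but $1<p<r$ guarantees $p>1$), the contribution of shell $k$ is at most a constant times $\pr{\sum_{\norm{\gri}_\infty=k}\delta_{\B,p}\pr{\gri}^p}^{1/p}$ times the number of $\gk$ in that shell only if one is crude; more carefully, $\sum_{\norm{\gk}_\infty=k}\norm{P_{\gk}\pr{X_{\gr{0}}}}_{\B,p}\leq C\,k^{d\pr{1-1/p}}\pr{\sum_{\norm{\gri}_\infty=k}\delta_{\B,p}\pr{\gri}^p}^{1/p}$ by Hölder's inequality applied to the $\ell^1$-versus-$\ell^p$ norms over the $O\pr{k^{d-1}}$-element shell, and \eqref{eq:cond_cas_bern_carres} is exactly the statement that this is summable in $k$. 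The verification of \eqref{eq:hyp_reg1} and \eqref{eq:hyp_reg2} with $q=0$ is then a tail estimate: the error $\norm{X_{\gr{0}}-\E{X_{\gr{0}}\mid\Fca_{m\gr{1}}}}_{\B,p}$ is dominated by $\sum_{\gk:\gk\not\imd m\gr{1}}\norm{P_{\gk}\pr{X_{\gr{0}}}}_{\B,p}$, a tail of the convergent series, and similarly for the mixed terms in \eqref{eq:hyp_reg2}.

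With these two ingredients in hand, Theorem~\ref{thm:strong_law_large_stationary_squares} applies verbatim and yields $\abs{n\gr{1}}^{-1/p}\norm{\sum_{\gr{1}\imd\gri\imd n\gr{1}}X_{\gri}}_{\B}=n^{-d/p}\norm{\sum_{\gr{1}\imd\gri\imd n\gr{1}}X_{\gri}}_{\B}\to 0$ almost surely, which is the claim. The main obstacle I anticipate is the precise constant-tracking in the shell estimate $\norm{P_{\gk}\pr{X_{\gr{0}}}}_{\B,p}\leq C\sum_{\gru\text{ in a controlled set}}\delta_{\B,p}\pr{\gru}$: because $P_{\gk}$ is an alternating sum of $2^d$ conditional expectations, a naive bound introduces $\delta$-terms at several lattice points near $\gk$, and one must check that regrouping these does not destroy the shell structure $\norm{\gk}_\infty=k$ that \eqref{eq:cond_cas_bern_carres} is built around; this is routine but is where the dimension-dependent combinatorial bookkeeping lives. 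Everything else — the reduction to \eqref{eq:setting}, the application of the cited theorem, the passage from the weak-$\el^p$ maximal bound to almost sure convergence — is already packaged in Subsection~\ref{subsec:orthomart_approx}.
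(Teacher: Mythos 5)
Your plan hinges on deducing the Hannan-type condition $\sum_{\gk\in\Z^d}\norm{P_{\gk}\pr{X_{\gr{0}}}}_{\B,p}<\infty$ from \eqref{eq:cond_cas_bern_carres} and then invoking Theorem~\ref{thm:strong_law_large_stationary_squares}, and the step that breaks is precisely the one you call a ``standard estimate'': the pointwise bound $\norm{P_{\gk}\pr{X_{\gr{0}}}}_{\B,p}\leq C\,\delta_{\B,p}\pr{\gk}$. That bound is a genuinely one-dimensional fact. For $d\geq 2$ and the quadrant filtration $\Fca_{\gri}=\sigma\pr{\eps_{\gru}:\gru\imd\gri}$, the projection $P_{\gk}\pr{Y}$ does not vanish merely because $Y$ does not depend on $\eps_{\gk}$: the increments of this filtration involve whole hyperplanes of innovations, not single sites, so resampling the single innovation $\eps_{\gk}$ does not cancel the alternating sum. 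Concretely, take $d=2$, real centered unit-variance innovations and $X_{\gri}=\eps_{\gri-\pr{0,1}}\eps_{\gri-\pr{1,0}}$, so that $X_{\gr{0}}=\eps_{\pr{0,-1}}\eps_{\pr{-1,0}}$ does not involve $\eps_{\gr{0}}$ at all and hence $\delta_{\B,p}\pr{\gr{0}}=0$; nevertheless
\begin{equation*}
P_{\gr{0}}\pr{X_{\gr{0}}}=\E{X_{\gr{0}}\mid\Fca_{0,0}}-\E{X_{\gr{0}}\mid\Fca_{-1,0}}-\E{X_{\gr{0}}\mid\Fca_{0,-1}}+\E{X_{\gr{0}}\mid\Fca_{-1,-1}}=X_{\gr{0}}\neq 0.
\end{equation*}
(This field of course satisfies the theorem; it only refutes the claimed comparison.) So the coupling argument ``the telescoping alternating sum only sees the innovation at site $\gk$'' is wrong for $d\geq 2$, and the combinatorial bookkeeping you deferred is not routine — it is where the approach fails. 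Nor is there an easy repair: the natural substitute is to write $X_{\gr{0}}=\sum_{m\geq 0}X_{\gr{0},m}$ with $X_{\gr{0},m}=Y_{\gr{0},m}-Y_{\gr{0},m-1}$, note that $P_{\gk}\pr{X_{\gr{0},m}}=0$ unless $\norm{\gk}_\infty\leq m$, and use that $P_{\gk}$ is bounded on $\el^p$ by $2^d$; this yields $\sum_{\gk}\norm{P_{\gk}\pr{X_{\gr{0}}}}_{\B,p}\leq 2^d\sum_{m}\pr{2m+1}^d\norm{X_{\gr{0},m}}_{\B,p}$, i.e.\ it needs the weight $m^{d}$, whereas \eqref{eq:cond_cas_bern_carres} combined with Lemma~\ref{lem:norme_Orlicz_X_0m} only controls the series with the strictly smaller weight $m^{d\pr{1-1/p}}$. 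In short, it is not established (and your argument does not establish) that \eqref{eq:cond_cas_bern_carres} implies the Hannan condition required by Theorem~\ref{thm:strong_law_large_stationary_squares}.

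The paper's proof avoids any comparison between $P_{\gk}$ and $\delta$. It splits $X_{\gri}=Y_{\gri,M-1}+\sum_{m\geq M}X_{\gri,m}$; the finite-range part $Y_{\gri,M-1}$ has only finitely many nonzero projections, so the orthomartingale-approximation result applies to it, while each layer $X_{\gri,m}$ is treated by blocking: with $m'=2m+1$, the sublattices $\ens{m'\grj+\gr{a}}_{\grj\in\Z^d}$ carry i.i.d.\ fields, inequality \eqref{eq:decomposition_champ_m_dependent} splits the sums over squares accordingly, a minor modification of Theorem~\ref{thm:loi_forte_grands_nombres_carres} bounds each block's maximal function in weak-$\el^p$, and renormalizing the $\pr{m'}^d$ blocks by $\pr{n/m'}^{d/p}$ instead of $n^{d/p}$ produces exactly the factor $m^{d\pr{1-1/p}}\norm{X_{\gr{0},m}}_{\B,p}$, which Lemma~\ref{lem:norme_Orlicz_X_0m} converts into the shell quantity appearing in \eqref{eq:cond_cas_bern_carres}; summing over $m$ and using the triangle inequality for $\norm{\cdot}_{p,w}$ finishes the proof. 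So either strengthen your hypothesis to one that genuinely implies Hannan's condition, or adopt this $m$-dependent blocking argument — the weight $k^{d\pr{1-1/p}}$ in \eqref{eq:cond_cas_bern_carres} is tailored to the latter.
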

\begin{Theorem}[Convergence in 
$\mathbb{L}^p$]\label{thm:loi_grands_nombres_champs_bern_rectangles_Lp}
Let $\pr{\B,\norm{\cdot}_{\B}}$ be a separable $r$-smooth Banach space and let 
$1<p< r$.
Let $\pr{X_{\gri}}_{\gri\in\Z^d}$ be a centered random field admitting the 
representation \eqref{eq:representation_Bernoulli_random_field}.
Suppose that 
 \begin{equation} 
  \sum_{k=1}^\infty k^{d\pr{1-1/p}}\pr{ 
\sum_{\gri:\norm{\gri}_\infty=k}\pr{\delta_{\B,p}\pr{\gri}}^p }^{1/p}<\infty.
 \end{equation}
Then 
\begin{equation}\label{eq:cond_cas_bern_rectangles}
 \lim_{N\to\infty}\sup_{\grn\smd\gr{1},\max\grn \geq N}\frac 1{\abs{\grn}^{1/p}}
\norm{ \sum_{\gr{1}\imd\gr{i}\imd \grn}X_{\gri}  
}_{\B,p}=0.
\end{equation}

\end{Theorem}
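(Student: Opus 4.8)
The plan is to derive Theorem~\ref{thm:loi_grands_nombres_champs_bern_rectangles_Lp} from the stationary $\mathbb L^p$ result, Theorem~\ref{thm:Lp_law_large_stationary}, by fitting the Bernoulli random field \eqref{eq:representation_Bernoulli_random_field} into the setting \eqref{eq:setting} and verifying its hypotheses. We take $\Omega = \R^{\Z^d}$ with the product measure, $T^{\gri}$ the coordinate shift, and $\Fca_{\gri} = \sigma\pr{\eps_{\gru}, \gru\imd\gri}$; by Proposition~\ref{prop:exemple_filtra_commutantes}(1) this filtration is completely commuting, and \eqref{eq:setting} holds with $X_{\gr{0}} = f\pr{\pr{\eps_{-\gk}}_{\gk\in\Z^d}}$. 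Since $1<p<r$ and the hypothesis with $k=1$ forces $\norm{X_{\gr{0}}}_{\B,p}<\infty$, only two things remain: the regularity conditions \eqref{eq:hyp_reg1}--\eqref{eq:hyp_reg2} with $q=0$, and the Hannan-type summability \eqref{eq:Hannan_el_p} of the projections.

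First I would bound $\norm{P_{\gk}\pr{X_{\gr{0}}}}_{\B,p}$ in terms of the physical dependence measures $\delta_{\B,p}\pr{\gri}$. The standard device (as in \cite{MR2172215} and used already for $d=1$) is a martingale-type coupling argument: $P_{\gk}\pr{X_{\gr{0}}}$ is a $d$-fold alternating sum of conditional expectations, and replacing the innovations $\eps_{\gr{u}}$ with indices $\gr{u}$ outside an appropriate cone by independent copies changes $X_{\gr{0}}$ by at most a sum of $\delta_{\B,p}$ terms. Concretely one shows, for $\gk$ with $\norm{\gk}_\infty = k \geq 1$, an estimate of the form $\norm{P_{\gk}\pr{X_{\gr{0}}}}_{\B,p} \leq C \sum_{\gri : \norm{\gri}_\infty \geq c k} \delta_{\B,p}\pr{\gri}$ or, after grouping by shells, that $\sum_{\gk : \norm{\gk}_\infty = k}\norm{P_{\gk}\pr{X_{\gr{0}}}}_{\B,p}$ is controlled by a weighted tail of the $\delta_{\B,p}$'s. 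Summing over $k$ and applying a discrete Fubini/Hölder rearrangement, the condition \eqref{eq:cond_cas_bern_rectangles} — namely $\sum_{k\geq 1} k^{d(1-1/p)}\pr{\sum_{\norm{\gri}_\infty = k}\delta_{\B,p}\pr{\gri}^p}^{1/p} < \infty$ — is exactly what is needed to conclude $\sum_{\gk\in\Z^d}\norm{P_{\gk}\pr{X_{\gr{0}}}}_{\B,p}<\infty$; the factor $k^{d(1-1/p)}$ accounts for the $\asymp k^{d-1}$ lattice points in each shell and the $\mathbb L^p$ summation across the shell. The same coupling also yields $\norm{X_{\gr{0}} - \E{X_{\gr{0}}\mid\Fca_{m\gr{1}}}}_{\B,p} \leq \sum_{\gri \not\imd m\gr{1}}\delta_{\B,p}\pr{\gri} \to 0$, giving \eqref{eq:hyp_reg1}, and an analogous bound on the ``mixed-sign cone'' conditional expectations gives \eqref{eq:hyp_reg2}; both tails vanish because the full sum $\sum_{\gri}\delta_{\B,p}\pr{\gri}$ is finite (a consequence of \eqref{eq:cond_cas_bern_rectangles}).

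Once \eqref{eq:hyp_reg1}, \eqref{eq:hyp_reg2} (with $q=0$) and \eqref{eq:Hannan_el_p} are in hand, Theorem~\ref{thm:Lp_law_large_stationary} applies verbatim and delivers \eqref{eq:cond_cas_bern_rectangles}, i.e.\ $\lim_{\max\grN\to\infty}\sup_{\grn\smd\gr{1},\max\grn\geq N}\abs{\grn}^{-1/p}\norm{\sum_{\gr{1}\imd\gri\imd\grn}X_{\gri}}_{\B,p} = 0$, which is precisely the assertion of the theorem. (Strictly speaking Theorem~\ref{thm:Lp_law_large_stationary} is stated for $1\leq p < r$, so the hypothesis $1<p<r$ here is comfortably within range; note also that for the $\mathbb L^p$ statement there is no need to pass through the $d_0$-type normalization since the field is centered and the relevant $X^{I}$ decomposition is trivial in this fully-shift-invariant setting.)

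The main obstacle is the dependence-measure bound on the projections: getting the exponent on $k$ and the $\mathbb L^p$-over-shells structure to line up exactly with \eqref{eq:cond_cas_bern_rectangles} requires care in the coupling argument, because $P_{\gk}$ is a multi-indexed alternating difference and one must track which innovations are "resampled" when conditioning on $\Fca_{\gk}$ versus $\Fca_{\gk - \ind{I}}$. The clean way is to introduce, for each $\gr{u}\in\Z^d$, the one-step resampled field and telescope $X_{\gr{0}} - \E{X_{\gr{0}}\mid\Fca_{\gr{m}}}$ along a path in $\Z^d$, controlling each increment by a $\delta_{\B,p}$; combined with the triangle inequality in $\mathbb L^p$ and Hölder's inequality to pass from $\ell^1$ of the shell to the $\ell^p$-norm appearing in the hypothesis, this yields the desired summability. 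I would isolate this estimate as a lemma (it is used identically in Theorems~\ref{thm:loi_grands_nombres_champs_bern_rect_ps} and \ref{thm:loi_grands_nombres_champs_bern_carres}, with $q=d-1$ there and $q=0$ here) and then the present theorem is a one-line corollary.
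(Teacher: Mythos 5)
Your plan --- verify Hannan's condition \eqref{eq:Hannan_el_p} together with \eqref{eq:hyp_reg1}--\eqref{eq:hyp_reg2} and then invoke Theorem~\ref{thm:Lp_law_large_stationary} --- is not the route the paper takes, and its key step has a genuine gap. The paper never controls the projections $P_{\gk}\pr{X_{\gr{0}}}$ of the full field: it uses the $m$-dependent approximation $X_{\gri}=\sum_{m\geq 0}X_{\gri,m}$ with $X_{\gri,m}=Y_{\gri,m}-Y_{\gri,m-1}$, splits the lattice into $\pr{2m+1}^d$ residue classes on which $\pr{X_{\gri,m}}$ is i.i.d., applies the Burkholder-type bound of Corollary~\ref{cor:moments_max_ordre_r_orthomartingale_Banach} on each class (this blocking is exactly where the weight $m^{d\pr{1-1/p}}$ in the hypothesis comes from, not from counting lattice points in a shell), and converts $\norm{X_{\gr{0},m}}_{\B,p}$ into the shell quantity via Lemma~\ref{lem:norme_Orlicz_X_0m}; Theorem~\ref{thm:Lp_law_large_stationary} enters only for the finitely dependent part $Y_{\gri,M-1}$, whose projections vanish for $\norm{\gk}_\infty\geq M$, so its Hannan sum is trivially finite.

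The gap in your proposal is the claim that the stated condition implies $\sum_{\gk\in\Z^d}\norm{P_{\gk}\pr{X_{\gr{0}}}}_{\B,p}<\infty$. For $d\geq 2$ the single-site coupling bound underlying this is false: $P_{\gk}$ does \emph{not} annihilate random variables that are merely independent of $\eps_{\gk}$ (it only annihilates variables independent of an entire hyperplane $\ens{\gru\in\Z^d: u_\ell=k_\ell}$), so $\norm{P_{\gk}\pr{X_{\gr{0}}}}_{\B,p}$ is not dominated by $\delta_{\B,p}\pr{-\gk}$, and the hyperplane-resampling bound one does have leads, after summing over $\gk$ and applying H\"older on shells, to a requirement with weight of order $k^{\pr{d-1}\pr{2-1/p}}$, which is strictly stronger than the hypothesis' weight $k^{d\pr{1-1/p}}$ as soon as $d\geq 2$. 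In fact the implication itself fails: take $d=2$, $\B=\R$, $r=2$, $1<p<4/3$, $\pr{\eps_{\gru}}$ i.i.d.\ Rademacher and $X_{\gr{0}}=\sum_{i,j\geq 1}\pr{i^2+j^2}^{-1}\eps_{\pr{i,0}}\eps_{\pr{0,j}}$, a centered functional of the i.i.d.\ field. Then $\delta_{\B,p}\pr{\gri}$ vanishes off the two half-axes and is of order $k^{-3/2}$ on the shell $\norm{\gri}_\infty=k$, so $\sum_k k^{2\pr{1-1/p}}\pr{\sum_{\norm{\gri}_\infty=k}\delta_{\B,p}\pr{\gri}^p}^{1/p}<\infty$ and the theorem's hypothesis holds; yet a direct computation gives $P_{\pr{k_1,k_2}}\pr{X_{\gr{0}}}=\pr{k_1^2+k_2^2}^{-1}\eps_{\pr{k_1,0}}\eps_{\pr{0,k_2}}$ for $k_1,k_2\geq 1$, so $\sum_{\gk}\norm{P_{\gk}\pr{X_{\gr{0}}}}_{\B,p}=\infty$ and \eqref{eq:Hannan_el_p} fails. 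Hence Theorem~\ref{thm:Lp_law_large_stationary} cannot be applied to the full field under the stated assumption, and some device such as the paper's $m$-dependence blocking is genuinely needed; your regularity checks \eqref{eq:hyp_reg1}--\eqref{eq:hyp_reg2} are fine but do not repair this.
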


We provide an application of the previous result to Hölderian functions of a 
linear random field, which are represented as 
\begin{equation}\label{eq:def_fonc_de_processus_lineaire}
 X_{\gri}=g\pr{  \sum_{\gr{k}\in\Z^d}
 A_{ \gr{k} }\pr{\eps_{\gri-\gr{k}} }},
\end{equation}
where $g\colon \B\to \B$ is $\alpha$-Hölder continuous for some $\alpha\in 
(0,1]$, that is, there exists a constant $K\pr{g}$ such that for each 
$x,x'\in\B$, 
\begin{equation}\label{eq:g_Lipschitz}
\norm{g\pr{x}-g\pr{x'}}_{\B}\leq K\pr{g}\norm{x-x'}^\alpha_{\B}, 
\end{equation}
$\pr{\B,\norm{\cdot}_{\B}}$ is a separable $r$-smooth Banach space,
$\pr{\eps_{\gru}}_{\gru\in\Z^d}$ is i.i.d.\ and $\B$-valued, $\eps_{\gr{0}}$ 
belongs to 
$\mathbb L^{p}$ for some $1<p<r$, $A_{\grk}\colon \B\to\B$ is a linear bounded 
operator, 
$\sum_{\gr{k} \in\Z^d}\norm{A_{ \grk}}_{\Bca\pr{\B}}^{p
\alpha}<\infty$, where $\norm{A_{\grk}}_{\Bca\pr{\B}} =\sup_{u\in\B,u\neq 0}
\norm{A_{\grk}\pr{u}}_{\B}/\norm{u}_{\B}$.

\begin{Corollary}\label{cor:linear_processes}
Let $\B$ be a separable $r$-smooth Banach space and $1<p<r$.
 Let $\pr{X_{\gri}}_{\gri\in\Z^d}$ be a random field defined as in
\eqref{eq:def_fonc_de_processus_lineaire}. Suppose that  $p\alpha\geq 1$ and 
that 
\begin{equation}
\sum_{k=1}^\infty k^{d\pr{1-1/p}}\pr{\sum_{\gri:\norm{\gri}_\infty =k} 
\norm{a_{\gri}  }_{\Bca\pr{\B}}^{p\alpha}
}^{1/p}<\infty. 
\end{equation}
 
\begin{itemize}
 \item If $\norm{\eps_{\gr{0}}}_{\B,p\alpha}<\infty$, then 
 \begin{equation} 
 \lim_{N\to\infty}\sup_{\grn\smd\gr{1},\max\grn \geq N}\frac 1{\abs{\grn}^{1/p}}
\norm{ \sum_{\gr{1}\imd\gr{i}\imd \grn}\pr{X_{\gri}  -\E{X_{\gri}} }
}_{\B,p}=0,
\end{equation}
and 
\begin{equation}
  \lim_{N\to\infty}\sup_{n\geq N}\frac 
1{n^{d/p}}
\norm{ \sum_{\gr{1}\imd\gr{i}\imd n\gr{1}}\pr{X_{\gri}  -\E{X_{\gri}} }
}_{\B}=0 \mbox{ a.s.}.
\end{equation}
\item If $\norm{\eps_{\gr{0}}}_{\B,p\alpha,d-1}<\infty$, then 
 \begin{equation}
 \lim_{N\to\infty}\sup_{\grn\smd\gr{1},\max\grn \geq N}\frac 1{\abs{\grn}^{1/p}}
\norm{ \sum_{\gr{1}\imd\gr{i}\imd \grn}\pr{X_{\gri}  -\E{X_{\gri}} }
}_{\B}=0 \mbox{ a.s.}.
\end{equation}
\end{itemize}
\end{Corollary}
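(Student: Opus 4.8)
The plan is to derive the Corollary from Theorems~\ref{thm:loi_grands_nombres_champs_bern_rect_ps}, \ref{thm:loi_grands_nombres_champs_bern_carres} and \ref{thm:loi_grands_nombres_champs_bern_rectangles_Lp}, applied to the centered random field $\pr{X_{\gri}-\E{X_{\gri}}}_{\gri\in\Z^d}$. First I would note that the representation \eqref{eq:def_fonc_de_processus_lineaire} is of the Bernoulli-shift form \eqref{eq:representation_Bernoulli_random_field} with $f\pr{\pr{y_{\gk}}_{\gk\in\Z^d}}=g\pr{\sum_{\gk\in\Z^d}A_{\gk}\pr{y_{\gk}}}$. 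Since $\B$ is $r$-smooth, hence $p\alpha$-smooth (recall $1\leq p\alpha\leq p<r$; when $p\alpha=1$ one uses the triangle inequality instead) and $\E{\norm{\eps_{\gr{0}}}_{\B}^{p\alpha}}<\infty$, the series defining $X_{\gri}$ converges in $\el^{p\alpha}\pr{\B}$, and the Hölder bound $\norm{g\pr{x}}_{\B}\leq\norm{g\pr{0}}_{\B}+K\pr{g}\norm{x}_{\B}^{\alpha}$ yields $\E{\norm{X_{\gri}}_{\B}}<\infty$, so the centering is legitimate; replacing $X_{\gri}$ by $X_{\gri}-\E{X_{\gri}}$ leaves the physical dependence measures unchanged.

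The core step is the estimate of $\delta_{\B,p,q}\pr{\gri}$ for this particular $f$. Because $\eps^*_{\gru}$ and $\eps_{\gru}$ differ only at $\gru=\gr{0}$, the arguments of $g$ in the two copies of $f$ differ exactly by $A_{\gri}\pr{\eps'_{\gr{0}}-\eps_{\gr{0}}}$, so \eqref{eq:g_Lipschitz} and the definition of the operator norm give
\[
\norm{f\pr{\pr{\eps^*_{\gri-\gk}}_{\gk\in\Z^d}}-f\pr{\pr{\eps_{\gri-\gk}}_{\gk\in\Z^d}}}_{\B}\leq K\pr{g}\norm{A_{\gri}}_{\Bca\pr{\B}}^{\alpha}\norm{\eps'_{\gr{0}}-\eps_{\gr{0}}}_{\B}^{\alpha}.
\]
Taking the $\norm{\cdot}_{p,q}$-norm, using the elementary inequality $\norm{Z^{\alpha}}_{p,q}\leq\norm{Z}_{p\alpha,q}^{\alpha}$ valid for a non-negative random variable $Z$, $\alpha\in(0,1]$ and $q\geq 0$ (a consequence of $\varphi_{p,q}\pr{t^{\alpha}}\leq\varphi_{p\alpha,q}\pr{t}$ since $\alpha\leq 1$), together with $\norm{\eps'_{\gr{0}}-\eps_{\gr{0}}}_{\B,p\alpha,q}\leq 2\norm{\eps_{\gr{0}}}_{\B,p\alpha,q}$, I would obtain
\[
\delta_{\B,p,q}\pr{\gri}\leq 2K\pr{g}\norm{\eps_{\gr{0}}}_{\B,p\alpha,q}^{\alpha}\,\norm{A_{\gri}}_{\Bca\pr{\B}}^{\alpha}.
\]

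With this bound in hand, for the first bullet I would take $q=0$, so that $\pr{\delta_{\B,p}\pr{\gri}}^{p}\leq\pr{2K\pr{g}\norm{\eps_{\gr{0}}}_{\B,p\alpha}^{\alpha}}^{p}\norm{A_{\gri}}_{\Bca\pr{\B}}^{p\alpha}$ and hence
\[
\sum_{k=1}^{\infty}k^{d\pr{1-1/p}}\pr{\sum_{\gri:\norm{\gri}_{\infty}=k}\pr{\delta_{\B,p}\pr{\gri}}^{p}}^{1/p}\leq 2K\pr{g}\norm{\eps_{\gr{0}}}_{\B,p\alpha}^{\alpha}\sum_{k=1}^{\infty}k^{d\pr{1-1/p}}\pr{\sum_{\gri:\norm{\gri}_{\infty}=k}\norm{A_{\gri}}_{\Bca\pr{\B}}^{p\alpha}}^{1/p},
\]
which is finite by the hypothesis of the Corollary together with $\norm{\eps_{\gr{0}}}_{\B,p\alpha}<\infty$; Theorems~\ref{thm:loi_grands_nombres_champs_bern_carres} and \ref{thm:loi_grands_nombres_champs_bern_rectangles_Lp} then give the two displayed limits of the first bullet. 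For the second bullet I would instead take $q=d-1$: the same computation, now using $\norm{\eps_{\gr{0}}}_{\B,p\alpha,d-1}<\infty$, shows that the hypothesis of Theorem~\ref{thm:loi_grands_nombres_champs_bern_rect_ps} is met, yielding the almost sure convergence over rectangles.

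The main obstacle is bookkeeping rather than depth: it lies in the transfer of the Hölder exponent $\alpha$ through the functional $\norm{\cdot}_{p,q}$, i.e.\ the inequality $\norm{Z^{\alpha}}_{p,q}\leq\norm{Z}_{p\alpha,q}^{\alpha}$ and the systematic use of $p\alpha\geq 1$, which is precisely what ensures both that $\norm{\cdot}_{p\alpha,q}$ is well behaved and that $\B$ is $p\alpha$-smooth so that the linear random field is well defined and $p\alpha$-integrable. Once these points are handled, the Corollary follows by plugging the bound on $\delta_{\B,p,q}\pr{\gri}$ into the three Bernoulli-shift theorems.
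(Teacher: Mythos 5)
Your proposal is correct and follows essentially the same route as the paper: both reduce the Corollary to the bound $\delta_{\B,p,q}\pr{\gri}\leq C\norm{A_{\gri}}_{\Bca\pr{\B}}^{\alpha}$ (with $q=0$ or $q=d-1$), obtained from the Hölder property of $g$, the fact that the coupled arguments differ only by $A_{\gri}\pr{\eps_{\gr{0}}-\eps'_{\gr{0}}}$, the operator-norm bound, and the power inequality \eqref{eq:norme_el_pq_de_puissances}, before invoking Theorems~\ref{thm:loi_grands_nombres_champs_bern_rect_ps}, \ref{thm:loi_grands_nombres_champs_bern_carres} and \ref{thm:loi_grands_nombres_champs_bern_rectangles_Lp}. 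The extra verifications you include (convergence of the defining series, integrability justifying the centering) are harmless additions not spelled out in the paper.
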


\section{Proofs}\label{sec:proofs}

 \subsection{Proof of Theorem~\ref{thm:loi_forte_grands_nombres_carres}}

It suffices to prove that there exists a constant $C\pr{\B,d,p}$ such that for each identically distributed orthomartingale difference random field $\pr{D_{\gri}}_{\gri\in\Z^d}$ and each positive $t$, 
\begin{equation}\label{eq:goal_loi_forte_grands_nombres_carres}
\sum_{N\geq 1}\PP\pr{\max_{1\leq n\leq 2^{N+1}}\norm{S_{n\gr{1}}}_{\B}>2^{dN/p}t }\leq C\pr{\B,d,p} t^{-p}\norm{D_{\gr{0}}}_{\B,p}^p.
\end{equation}
Replacing $D_{\gri}$ by $tD_{\gri}$, it suffices to prove \eqref{eq:goal_loi_forte_grands_nombres_carres} for $t=1$. 
Define for $\ell\geq 1$ the set 
\begin{equation}
I_\ell:=\ens{\gri\in\Z^d,\gri\smd \gr{1}, \max\gri=\ell}.
\end{equation}
and 
\begin{equation}
d_\ell=\sum_{\gri\in I_\ell}D_{\gri}.
\end{equation}
For $N \geq 1$, let 
\begin{equation}
d'_{N,\ell}:= \sum_{\gri\in I_\ell}\sum_{J\subset\intent{1,d}}\pr{-1}^{\abs{J}}
 \E{D_{\gri}\ind{\norm{D_{\gri}}_{\B}\leq 2^{Nd/p}  }\mid\Fca_{\gri-\ind{J}} },
\end{equation}
\begin{equation}
d''_{N,\ell}:= \sum_{\gri\in I_\ell}\sum_{J\subset\intent{1,d}}\pr{-1}^{\abs{J}}
 \E{D_{\gri}\ind{\norm{D_{\gri}}_{\B}> 2^{Nd/p}  }\mid\Fca_{\gri-\ind{J}} }.
\end{equation}
The orthomartingale property of $\pr{D_{\gri}}_{\gri\in\Z^d}$ guarantees that 
$d_\ell=d'_{N,\ell}+d''_{N,\ell}$. Therefore, it suffices to prove that 
\begin{equation}\label{eq:conv_loi_carre_partie_bornee}
\sum_{N\geq 1}\PP\pr{\max_{1\leq n\leq 2^{N}}\norm{ 
\sum_{\ell=1}^nd'_{N,\ell}}_{\B}>2^{dN/p}}\leq C\pr{\B,d,p}
\norm{D_{\gr{0}}}_{\B,p}^p \mbox{ and}
\end{equation}
\begin{equation}\label{eq:conv_loi_carre_partie_non_bornee}
\sum_{N\geq 1}\PP\pr{\max_{1\leq n\leq 2^{N}}\norm{ 
\sum_{\ell=1}^nd''_{N,\ell}}_{\B}>2^{dN/p}}\leq C\pr{\B,d,p}
\norm{D_{\gr{0}}}_{\B,p}^p.
\end{equation}

Observe that for a fixed $N\geq 1$, the sequence $\pr{d'_{N,\ell}}_{\ell=1}^{2^N}$ is a martingale difference sequence, Doob's inequality combined with 
Proposition~\ref{prop:moments_ordre_r_orthomartingale_Banach} gives that 
\begin{equation}
\PP\pr{\max_{1\leq n\leq 2^{N}}\norm{ 
\sum_{\ell=1}^nd'_{N,\ell}}_{\B}>2^{dN/p}}
\leq 2^{-rdN/p}
\pr{\frac{r}{r-1}}^rC\pr{\B,1}\sum_{\ell=1}^{2^N}\E{\norm{d'_{N,\ell}  }_{\B}^r }.  
\end{equation}
Moreover, using the orthomartingale property of $\pr{
\sum_{J\subset\intent{1,d}}\pr{-1}^{\abs{J}}
 \E{D_{\gri}\ind{\norm{D_{\gri}}_{\B}\leq 2^{Nd/p}  }\mid\Fca_{\gri-\ind{J}} 
}}_{\gri\in\Z^d}$ followed by the triangle inequality and the fact that 
 $\pr{D_{\gri}}_{\gri\in\Z^d}$ is identically distributed, one gets 
 \begin{multline}
\PP\pr{\max_{1\leq n\leq 2^{N}}\norm{ 
\sum_{\ell=1}^nd'_{N,\ell}}_{\B}>2^{dN/p}}\\
\leq 2^{-rdN/p}
\pr{\frac{r}{r-1}}^rC\pr{\B,1}^rC\pr{\B,d}^r\sum_{\ell=1}^{2^N}
\sum_{\gri\in I_\ell} \E{\norm{\sum_{J\subset\intent{1,d}}\pr{-1}^{\abs{J}}
 \E{D_{\gri}\ind{\norm{D_{\gri}}_{\B}\leq 2^{Nd/p}  }\mid\Fca_{\gri-\ind{J}} } }_{\B}^r }\\
 \leq  2^{Nd\pr{1-r/p}}
\pr{\frac{r2^d}{r-1}}^rC\pr{\B,1}^rC\pr{\B,d}^r  \E{\norm{ 
  D_{\gr{0}}\ind{\norm{D_{\gr{0}}}_{\B}\leq 2^{Nd/p}  }  }_{\B}^r }.  
\end{multline}
Then \eqref{eq:conv_loi_carre_partie_bornee} follows from  inequality
\eqref{eq:series_queues_va_tronquee_carres}.
 
Let us show \eqref{eq:conv_loi_carre_partie_non_bornee}. Using 
$\max_{1\leq n\leq 2^{N}}\norm{ 
\sum_{\ell=1}^nd''_{N,\ell}}_{\B}
\leq \sum_{\ell=1}^{2^N}
 \norm{ d''_{N,\ell}}_{\B} $ and Markov's inequality gives 
\begin{multline*}
\PP\pr{\max_{1\leq n\leq 2^{N}}\norm{ 
\sum_{\ell=1}^nd''_{N,\ell}}_{\B}>2^{dN/p}}\leq 
2^{-dN/p}\sum_{\ell=1}^{2^N}\E{\norm{\sum_{\gri\in I_\ell}\sum_{J\subset\intent{1,d}}\pr{-1}^{\abs{J}}
 \E{D_{\gri}\ind{\norm{D_{\gri}}_{\B}> 2^{Nd/p}  }\mid\Fca_{\gri-\ind{J}} }}_{\B}}.
\end{multline*}
Then using the triangle inequality and the fact that $\pr{D_{\gri}}_{\gri\in\Z^d}$ is identically distributed gives 

\begin{equation}
\PP\pr{\max_{1\leq n\leq 2^{N}}\norm{ 
\sum_{\ell=1}^nd''_{N,\ell}}_{\B}>2^{dN/p}}\leq 
2^{d-dN\pr{1-1/p}} 2^N \E{\norm{  
 D_{\gr{0}}}_{\B}\ind{\norm{D_{\gr{0}}}_{\B}>2^{dN/p} }}.
\end{equation}
and \eqref{eq:conv_loi_carre_partie_non_bornee} follows from inequality
\eqref{eq:series_queues_va_non_tronquee_carres}. This ends the proof of 
Theorem~\ref{thm:loi_forte_grands_nombres_carres}.
\subsection{Proof of Theorem~\ref{thm:loi_faible_des_grands_nombres_orthomartingale}}

The proof will be done via a truncation argument.
Let $C\pr{\B,d,p}$ be like in Corollary~\ref{cor:moments_max_ordre_r_orthomartingale_Banach}.
Fix a positive $\eps$ and choose $R$ such that 
\begin{equation}\label{eq:choice_of_R}
 \norm{D_{\gr{0}} \ind{\norm{D_{\gr{0}}}_{\B}>R}}_{\B,p}\leq 2^{-d-1}\eps/C\pr{\B,d,p}.
\end{equation}

Define 
\begin{equation}
D'_{\gri}:=\sum_{I\subset\intent{1,d}}\pr{-1}^{\abs{I}}
\E{D_{\gri}\ind{\norm{D_{\gri}}_{\B}\leq R}\mid \mid\Fca_{\gri-\ind{I}}},
D''_{\gri}:=\sum_{I\subset\intent{1,d}}\pr{-1}^{\abs{I}}
\E{D_{\gri}\ind{\norm{D_{\gri}}_{\B}> R}\mid \mid\Fca_{\gri-\ind{I}}}.
\end{equation}
 By 
the orthomartingale property of $D_{\gri}$, one has 
$D'_{\gri}+D''_{\gri}=D_{\gri}$. Consequently, for each $\grn\smd \gr{1}$, 
\begin{equation}\label{eq:conv_Lp_orthomartingale}
 \abs{\grN}^{-1/p}\max_{\gr{1}\imd\grn\imd \grN}
\norm{S_{\grn}  }_{\B}
\leq 
 \abs{\grN}^{-1/p}\max_{\gr{1}\imd\grn\imd \grN}\norm{ \sum_{\gr{1}\imd\gri\imd\grn}D'_{\gri}  }_{\B}+
 \abs{\grN}^{-1/p}\max_{\gr{1}\imd\grn\imd \grN}\norm{ \sum_{\gr{1}\imd\gri\imd\grn}D''_{\gri}  }_{\B}.
\end{equation}
For the first term of the right hand side of \eqref{eq:conv_Lp_orthomartingale}, we use $\norm{\cdot}_{\B,p}\leq 
\norm{\cdot}_{\B,r}$ and Proposition~\ref{prop:moments_ordre_r_orthomartingale_Banach} in order to derive that 
\begin{equation}
\abs{\grN}^{-1/p}\norm{ \max_{\gr{1}\imd\grn\imd \grN}\norm{ \sum_{\gr{1}\imd\gri\imd\grn}D'_{\gri} }_{\B} }_{p}
\leq C\pr{\B,d,p}\abs{\grN}^{-1/p} 
\pr{\sum_{\gr{1}\imd \gri\imd\grN}
 \norm{D'_{\gri}}_{\B,r}^r}^{1/r}.
\end{equation}
Moreover, by definition of $D'_{\gri}$, one has 
\begin{equation}
 \norm{D'_{\gri}}_{\B,r}\leq 2^d \norm{D_{\gri}\ind{\norm{D_{\gri}}_{\B}\leq R}}_{\B,r}=2^d \norm{D_{\gr{0}}\ind{\norm{D_{\gr{0}}}_{\B}\leq R}}_{\B,r} 
\end{equation}
where we used the fact that the random field $\pr{D_{\gri}}_{\gri\in\Z^d}$ 
is identically distributed. We thus infer the bound 
\begin{equation}
\abs{\grN}^{-1/p}\norm{ \max_{\gr{1}\imd\grn\imd \grN}\norm{ \sum_{\gr{1}\imd\gri\imd\grn}D'_{\gri} }_{\B} }_{p}
\leq C\pr{\B,d,p}2^d\abs{\grN}^{1/r-1/p}
\norm{D_{\gr{0}}\ind{\norm{D_{\gr{0}}}_{\B}\leq R}}_{\B,r}.
\end{equation}
Consequently, we can find $N_0$ such that if $\max\grN\geq N_0$, then 
\begin{equation}\label{eq:norm_D'}
\abs{\grN}^{-1/p}\norm{ \max_{\gr{1}\imd\grn\imd \grN}\norm{ \sum_{\gr{1}\imd\gri\imd\grn}D'_{\gri} }_{\B} }_{p}<\eps/2.
\end{equation}
For the second term of the right hand side of \eqref{eq:conv_Lp_orthomartingale}, we apply Corollary~\ref{cor:moments_max_ordre_r_orthomartingale_Banach} and get that 
 \begin{equation}\label{eq:moment_partie_non_tronquee}
   \abs{\grN}^{-1/p}  \norm{\max_{\gr{1}\imd\grn\imd\grN}
   \norm{\sum_{\gr{1}\imd \gri\imd\grn} D''_{\gri}}_{\B} }_{p} 
  \leq C\pr{\B,d,p}\abs{\grN}^{-1/p} 
\pr{\sum_{\gr{1}\imd \gri\imd\grN}
 \norm{D''_{\gri}}_{\B,p}^p}^{1/p}.
 \end{equation}
The triangle inequality combined with the identical distribution of 
$\pr{D_{\gri}\ind{\norm{D_{\gri}}_{\B}>R}}_{\gri\in\Z^d}$ and 
\eqref{eq:choice_of_R} shows that 
\begin{equation}\label{eq:norm_D''}
\abs{\grN}^{-1/p} \norm{\max_{\gr{1}\imd\grn\imd\grN}
   \norm{\sum_{\gr{1}\imd \gri\imd\grn} D''_{\gri}}_{\B} }_{p} <\eps/2.
\end{equation}
The combination of \eqref{eq:conv_Lp_orthomartingale}, \eqref{eq:norm_D'} and \eqref{eq:norm_D''} 
concludes the proof of Theorem~\ref{thm:loi_faible_des_grands_nombres_orthomartingale}.

\subsection{Proof of Theorem~\ref{thm:strong_law_large_stationary}}

It suffices to prove that for each positive $\eps$, 
\begin{equation}
\lim_{N\to\infty}
\PP\pr{\sup_{\grn\smd\gr{1},\max\grn\geq N}\frac 1{\abs{\grn}^{1/p}}
\norm{\sum_{\gr{1}\imd\gri\imd\grn}X_{\gri}}_{\B}>\eps}=0.
\end{equation}
Let us define 
\begin{equation}\label{eq:def_de_X_i^k}
X_{\gri}^{\pr{K}}:=\sum_{\gk:  \abs{\gk-\gri}_\infty\leq K}
P_{\gri+\gk}\pr{X_{\gri}}.
\end{equation}
By assumption, the following convergence holds;
\begin{equation}
\lim_{K\to\infty}\norm{X_{\gri}-\sum_{\gk\in\Z^d:\norm{\gr{k}}_\infty \leq K}P_{\gri+\gk}\pr{X_{\gri}}}_{\B,p,d-1}=0,
\end{equation}
 where $\norm{\cdot}_{p,q}$ is defined as in \eqref{eq:def_norme_el_pq}.
By \eqref{eq:comparaison_norm_Lp_faible}, one obtains
\begin{multline}
\PP\pr{\sup_{\grn\smd\gr{1},\max\grn\geq N}\frac 1{\abs{\grn}^{1/p}}
\norm{\sum_{\gr{1}\imd\gri\imd\grn}X_{\gri}}_{\B}>\eps}\leq 
\PP\pr{\sup_{\grn\smd\gr{1},\max\grn\geq N}\frac 1{\abs{\grn}^{1/p}}
\norm{\sum_{\gr{1}\imd\gri\imd\grn}X_{\gri}^{\pr{K}}}_{\B}>\eps/2}\\
+\pr{c_p\eps/2}^{-p}  
\norm{\sup_{\grn\smd\gr{1},\max\grn\geq N}\frac 1{\abs{\grn}^{1/p}}  \norm{\sum_{\gk\in\Z^d:\norm{\gr{k}}_\infty > K}\sum_{\gr{1}\imd\gri\imd\grn}P_{\gri+\gk}\pr{X_{\gri}}}_{\B} }_{\R,p,w}^p
\end{multline}
and the triangle inequality for the norm $\norm{\cdot}_{\R,p,w}$ implies 
\begin{multline}
\PP\pr{\sup_{\grn\smd\gr{1},\max\grn\geq N}\frac 1{\abs{\grn}^{1/p}}
\norm{\sum_{\gr{1}\imd\gri\imd\grn}X_{\gri}}_{\B}>\eps}\leq 
\PP\pr{\sup_{\grn\smd\gr{1},\max\grn\geq N}\frac 1{\abs{\grn}^{1/p}}
\norm{\sum_{\gr{1}\imd\gri\imd\grn}X_{\gri}^{\pr{K}}}_{\B}>\eps/2}\\
+\pr{c_p\eps/2}^{-p}  
\pr{ \sum_{\gk\in\Z^d:\norm{\gr{k}}_\infty > K}\norm{\sup_{\grn\smd\gr{1},\max\grn\geq N}\frac 1{\abs{\grn}^{1/p}}  \sum_{\gr{1}\imd\gri\imd\grn}P_{\gri+\gk}\pr{X_{\gri}}}_{\B,p,w}  }^p.
\end{multline}
Since $\pr{P_{\gk+\gri}\pr{X_{\gri}}}_{\gri\in\Z^d}$ is an identically distributed orthomartingale difference random field, Theorem~\ref{thm:loi_forte_des_grands_nombres_orthomartingale} gives 
\begin{multline}
\PP\pr{\sup_{\grn\smd\gr{1},\max\grn\geq N}\frac 1{\abs{\grn}^{1/p}}
\norm{\sum_{\gr{1}\imd\gri\imd\grn}X_{\gri}}_{\B}>\eps}\leq 
\PP\pr{\sup_{\grn\smd\gr{1},\max\grn\geq N}\frac 1{\abs{\grn}^{1/p}}
\norm{\sum_{\gr{1}\imd\gri\imd\grn}X_{\gri}^{\pr{K}}}_{\B}>\eps/2}\\
+\pr{c_p\eps/2}^{-p}  
\pr{ \sum_{\gk\in\Z^d:\norm{\gr{k}}_\infty > K}\norm{ P_{ \gk}\pr{X_{\gr{0}}}}_{\B,p,d-1}  }^p
\end{multline}
and since we  can choose $K$ such that the last term can be made as small as we wish, it suffices to show that for each $K$, 
\begin{equation}
\lim_{N\to\infty}
\PP\pr{\sup_{\grn\smd\gr{1},\max\grn\geq N}\frac 1{\abs{\grn}^{1/p}}
\norm{\sum_{\gr{1}\imd\gri\imd\grn}X_{\gri}^{\pr{K}}}_{\B}>\eps}=0,
\end{equation}
which reduces, in view of the Borel-Cantelli lemma, to prove that
\begin{equation}\label{eq:cond_suffisante_conv_ps_orthomartingale_cobord}
\sum_{\gr{N}\in\N^d}
\PP\pr{  \frac 1{\abs{2^{\gr{N}}}^{1/p}}   \max_{\gr{1}\imd\grn\imd \gr{2^{N}}   } \norm{\sum_{\gr{1}\imd\gri\imd\grn}X_{\gri}^{\pr{K}}}_{\B}>\eps  }<\infty.
\end{equation}
By the results of Section 4 in \cite{MR3264437}, we can express $X_{\gri}^{\pr{K}}$ 
as 
\begin{equation}\label{eq:decomposition_de_X_i^k}
X_{\gri}^{\pr{K}} =
\sum_{I\subset\intent{1,d}}\prod_{\ell\in\intent{1,d}\setminus I}
\pr{\operatorname{Id}-U^{\gr{e_\ell}}}  D_{\gri}^{\pr{I}},
\end{equation}
where for $I\subsetneq\intent{1,d}$, $\pr{D_{\gri}^{\pr{I}}}_{i_\ell\in\Z,\ell\in I}$ is a strictly 
stationary orthomartingale difference random field, $D_{\gr{0}}^{\pr{I}}\in\el_{p,d-1}$. In view of 
\eqref{eq:cond_suffisante_conv_ps_orthomartingale_cobord} and 
\eqref{eq:decomposition_de_X_i^k}, it suffices to prove that for each $I\subset\intent{1,d}$, 
\begin{equation}\label{eq:cond_suffisante_conv_ps_orthomartingale_cobord_bis}
\sum_{\gr{N}\in\N^d}
\PP\pr{  \frac 1{\abs{2^{\gr{N}}}^{1/p}}   \max_{\gr{1}\imd\grn\imd \gr{2^{N}}   } \norm{\sum_{\gr{1}\imd\gri\imd\grn}U^{\gri}\prod_{\ell\in\intent{1,d}\setminus I}
\pr{\operatorname{Id}-U^{\gr{e_\ell}}}  D_{\gr{0}}^{\pr{I}}}_{\B}>\eps  }<\infty.
\end{equation}
For $I=\intent{1,d}$, this follows from Theorem~\ref{thm:loi_forte_des_grands_nombres_orthomartingale} and 
for $I=\emptyset$, one derives from 
\begin{equation}
\max_{\gr{1}\imd\grn\imd \gr{2^{N}}}\norm{\sum_{\gr{1}\imd\gri\imd\grn}U^{\gri}\prod_{\ell\in\intent{1,d} }
\pr{\operatorname{Id}-U^{\gr{e_\ell}}}  D_{\gr{0}}^{\pr{\emptyset}}}_{\B}  
\leq \max_{\gr{1}\imd\grn\imd \gr{2^{N}}+\gr{1}}
U^{\grn}\pr{\norm{ D_{\gr{0}}^{\pr{\emptyset}}}_{\B}}
\end{equation}
that 
\begin{multline}
\sum_{\gr{N}\in\N^d}
\PP\pr{  \frac 1{\abs{2^{\gr{N}}}^{1/p}}   \max_{\gr{1}\imd\grn\imd \gr{2^{N}}   } \norm{\sum_{\gr{1}\imd\gri\imd\grn}U^{\gri}\prod_{\ell\in\intent{1,d} }
\pr{\operatorname{Id}-U^{\gr{e_\ell}}}  D_{\gr{0}}^{\pr{\emptyset}}}_{\B}>\eps}  \\
\leq 2^{d}\sum_{\gr{N}\in\N^d} \abs{\gr{2^{N}}}
\PP\pr{  \norm{D_{\gr{0}}^{\pr{\emptyset}}}_{\B} >\eps\abs{\gr{2^N}}^{1/p}}
\end{multline}
and using 
\begin{equation}\label{eq:card_duples_somme_k}
\operatorname{Card}\pr{\ens{\grN\in\N^d: \sum_{\ell=1}^dN_\ell=k  }}\leq c_d k^{d-1},
\end{equation}
one finds that
\begin{multline}
\sum_{\gr{N}\in\N^d}
\PP\pr{  \frac 1{\abs{2^{\gr{N}}}^{1/p}}   \max_{\gr{1}\imd\grn\imd \gr{2^{N}}   } \norm{\sum_{\gr{1}\imd\gri\imd\grn}U^{\gri}\prod_{\ell\in\intent{1,d} }
\pr{\operatorname{Id}-U^{\gr{e_\ell}}}  D_{\gr{0}}^{\pr{\emptyset}}}_{\B}>\eps}  \\
\leq c_{d}\sum_{k=1}^\infty 2^k k^{d-1}
\PP\pr{  \norm{D_{\gr{0}}^{\pr{\emptyset}}}_{\B} >\eps 2^{k/p}},
\end{multline}
which is finite using  $D_{\gr{0}}^{\pr{\emptyset}}\in\el_{p,d-1}$
and \eqref{eq:series_queues_va_non_tronquee_rectangles}.

Since the role played by the measure preserving maps is 
symmetric, it suffices to show that for each $\ell_0\in\intent{1,d-1}$, 
\begin{equation}\label{eq:cond_suffisante_conv_ps_orthomartingale_cobord_ter}
\sum_{\gr{N}\in\N^d}
\PP\pr{  \frac 1{\abs{2^{\gr{N}}}^{1/p}}   \max_{\gr{1}\imd\grn\imd \gr{2^{N}}   } \norm{\sum_{\gr{1}\imd\gri\imd\grn}U^{\gri}\prod_{\ell\in\intent{\ell_0+1,d} }
\pr{\operatorname{Id}-U^{\gr{e_\ell}}}  D_{\gr{0}}^{\pr{\intent{1,\ell_0}}}}_{\B}>\eps  }<\infty.
\end{equation}
In order to ease the notations, we write for 
$\grn=\pr{n_\ell}_{\ell\in\intent{1,d}}$, 
$\gr{n'}\in\N^{\ell_0}:=\pr{n_\ell}_{\ell\in\intent{1,\ell_0}}$ and 
$\gr{n''}:=\pr{n_\ell}_{\ell\in\intent{\ell_0+1,d}}$, and similar notations for 
$\gr{i'},\gr{i''}$. We start from
\begin{multline}
\max_{\gr{1}\imd\grn\imd \gr{2^{N}}   } \norm{\sum_{\gr{1}\imd\gri\imd\grn}U^{\gri}\prod_{\ell\in\intent{\ell_0+1,d} }
\pr{\operatorname{Id}-U^{\gr{e_\ell}}}  
D_{\gr{0}}^{\pr{\intent{1,\ell_0}}}}_{\B}\\
\leq 
\max_{\gr{1''}\imd \gr{n''}\imd \gr{2^{N''+\gr{1''}}}}
U^{\gr{n''}}
\max_{\gr{1'}\imd\gr{n'}\imd \gr{2^{N'}}   } \norm{\sum_{\gr{1'}\imd\gri'\imd\grn'}U^{\gr{i'}}  D_{\gr{0}}^{\pr{\intent{1,\ell_0}}}}_{\B}
\end{multline}
and we are therefore reduced to show that 
\begin{equation}\label{eq:goal_decomp_ortho_cobord}
\sum_{\gr{N}\in\N^d}\abs{\gr{2^{N''}}}
\PP\pr{\max_{\gr{1'}\imd\gr{n'}\imd \gr{2^{N'}}   } \norm{\sum_{\gr{1'}\imd\gri'\imd\grn'}U^{\gr{i'}}  D_{\gr{0}}^{\pr{\intent{1,\ell_0}}}}_{\B} >\eps \abs{\gr{2^N}}^{1/p}}<\infty
\end{equation}
for each positive $\eps$. To do so, define for each $N\in\N^d$ the orthomartingale difference random fields $\pr{D_{N,\gr{0}}\circ T^{\gr{i'}}}_{\gr{i'}\in\Z^{\ell_0}}$ and $\pr{D'_{N,\gr{0}}\circ T^{\gr{i'}}}_{\gr{i'}\in\Z^{\ell_0}}$ by 
\begin{equation}
D_{N,\gr{0}}:=\sum_{J\subset\intent{1,\ell_0}}
\pr{-1}^{\card{J}}\E{D_{\gr{0}}^{\pr{\intent{1,\ell_0}}}
\ind{\norm{D_{\gr{0}}^{\pr{\intent{1,\ell_0}}}}_{\B}\leq \abs{\gr{2^{N}}}^{1/p}} \mid \Fca_{\gr{0}-\ind{J}}},
\end{equation}
\begin{equation}
D'_{N,\gr{0}}:=\sum_{J\subset\intent{1,\ell_0}}
\pr{-1}^{\card{J}}\E{D_{\gr{0}}^{\pr{\intent{1,\ell_0}}}
\ind{\norm{D_{\gr{0}}^{\pr{\intent{1,\ell_0}}}}_{\B}> \abs{\gr{2^{N}}}^{1/p}} \mid \Fca_{\gr{0}-\ind{J}}}. 
\end{equation}
The equality $D_{\gr{0}}^{\pr{\intent{1,\ell_0}}}=D_{N,\gr{0}}+
D'_{N,\gr{0}}$ reduces the proof of \eqref{eq:goal_decomp_ortho_cobord} 
to 
\begin{equation}\label{eq:goal_decomp_ortho_cobord1}
\sum_{\gr{N}\in\N^d}\abs{\gr{2^{N''}}}
\PP\pr{\max_{\gr{1'}\imd\gr{n'}\imd \gr{2^{N'}}   } \norm{\sum_{\gr{1'}\imd\gri'\imd\grn'}U^{\gr{i'}} D_{N,\gr{0}} }_{\B} >\eps \abs{\gr{2^N}}^{1/p}}<\infty\mbox{ and }
\end{equation}
\begin{equation}\label{eq:goal_decomp_ortho_cobord2}
\sum_{\gr{N}\in\N^d}\abs{\gr{2^{N''}}}
\PP\pr{\max_{\gr{1'}\imd\gr{n'}\imd \gr{2^{N'}}   } \norm{\sum_{\gr{1'}\imd\gri'\imd\grn'}U^{\gr{i'}} D'_{N,\gr{0}}}_{\B} >\eps \abs{\gr{2^N}}^{1/p}}<\infty.
\end{equation}
In order to show \eqref{eq:goal_decomp_ortho_cobord1}, we 
use Corollary~\ref{cor:moments_max_ordre_r_orthomartingale_Banach} with $p=r$ combined with the identical distribution of 
$\pr{U^{\gr{i'}}D_{N,\gr{0}}}_{\gr{i'}\in\Z^{\ell_0}}$ and derive that 
\begin{multline}
\abs{\gr{2^{N''}}}
\PP\pr{\max_{\gr{1'}\imd\gr{n'}\imd \gr{2^{N'}}   } \norm{\sum_{\gr{1'}\imd\gri'\imd\grn'}U^{\gr{i'}} D_{N,\gr{0}} }_{\B} >\eps \abs{\gr{2^N}}^{1/p}}\\
\leq \eps^{-r}C\pr{\B,d,r}\abs{\gr{2^N}}^{-r/p}\abs{2^{\gr{N}}}\norm{D_{N,\gr{0}}}_{\B,r}^r\\
\leq   \eps^{-r}C\pr{\B,d,r}2^{\ell_0}\abs{\gr{2^N}}^{\pr{1-r/p}} 
\norm{D_{\gr{0}}^{\pr{\intent{1,\ell_0}}}
\ind{\norm{D_{\gr{0}}^{\pr{\intent{1,\ell_0}}}}_{\B}\leq \abs{\gr{2^{N}}}^{1/p}}}_{\B,r}^r.
\end{multline}
Then by \eqref{eq:card_duples_somme_k}, we reduce the proof of 
\eqref{eq:goal_decomp_ortho_cobord1} to 
\begin{equation}\label{eq:tail_series_for_Hannan_rect1}
\sum_{k=1}^\infty 2^{k\pr{1-r/p}}k^{d-1}\norm{D_{\gr{0}}^{\pr{\intent{1,\ell_0}}}
\ind{\norm{D_{\gr{0}}^{\pr{\intent{1,\ell_0}}}}_{\B}\leq 
2^{k/p}}}_{\B,r}^r<\infty,
\end{equation}
which follows from \eqref{eq:series_queues_va_tronquee_rectangles}. In 
order to show \eqref{eq:goal_decomp_ortho_cobord2}, we start by Markov's inequality, which gives 
\begin{equation*}
\abs{\gr{2^{N''}}}
\PP\pr{\max_{\gr{1'}\imd\gr{n'}\imd \gr{2^{N'}}   } \norm{\sum_{\gr{1'}\imd\gri'\imd\grn'}U^{\gr{i'}} D'_{N,\gr{0}}}_{\B} >\eps \abs{\gr{2^N}}}
\leq 2^d\eps^{-1}\abs{\gr{2^{N}}}
\E{\norm{D_{\gr{0}}^{\pr{\intent{1,\ell_0}}}}_{\B}
\ind{\norm{D_{\gr{0}}^{\pr{\intent{1,\ell_0}}}}_{\B}>\abs{\gr{2^{N}}}^{1/p} }  },
\end{equation*}
then we sum over $\gr{N}\in\N^d$, use \eqref{eq:card_duples_somme_k} 
in order to reduce the proof of \eqref{eq:goal_decomp_ortho_cobord2}
to that of 
\begin{equation}\label{eq:tail_series_for_Hannan_rect2}
 \sum_{k=0}^\infty 
2^kk^{d-1}\E{\norm{D_{\gr{0}}^{\pr{\intent{1,\ell_0}}}}_{\B}\ind{\norm{D_{\gr{0}
}^{\pr{\intent{1,\ell_0}}}}_{\B}>2^{k/p}}}<\infty.
\end{equation}
Then we use \eqref{eq:series_E_Y_ind_Y>} to get  
\eqref{eq:goal_decomp_ortho_cobord2} and conclude the proof of 
Theorem~\ref{thm:strong_law_large_stationary}.

\subsection{Proof of Theorem~\ref{thm:strong_law_large_stationary_squares}}

The proof follows essentially the lines of that of 
Theorem~\ref{thm:strong_law_large_stationary} hence 
we will mention only the crucial steps. The reduction to the case where the 
random field $\pr{X_{\gri}}_{\gri\in\Z^d}$ admits the representation 
\eqref{eq:decomposition_de_X_i^k} works in an analoguous way, replacing each 
occurence of $\sup_{\grn\smd\gr{1},\max\grn\geq 
N}$ by a supremum over the $\grn$ having identical coordinates. 
The analogue of \eqref{eq:cond_suffisante_conv_ps_orthomartingale_cobord_bis} 
reads as 
\begin{equation}
\label{eq:cond_suffisante_conv_ps_orthomartingale_cobord__carres_bis}
\sum_{N=1}^\infty
\PP\pr{  \frac 1{2^{Nd/p}}  \max_{1\leq n\leq 2^N} 
\norm{\sum_{\gr{1}\imd\gri\imd 
n\gr{1}}U^{\gri}\prod_{\ell\in\intent{1,d}\setminus I}
\pr{\operatorname{Id}-U^{\gr{e_\ell}}}  D_{\gr{0}}^{\pr{I}}}_{\B}>\eps  
}<\infty.
\end{equation}
For $I=\intent{1,d}$, this follows from 
Theorem~\ref{thm:loi_forte_grands_nombres_carres} and for $I=\emptyset$, we use 
the bound 
\begin{equation}
 \norm{\sum_{\gr{1}\imd\gri\imd 
n\gr{1}}U^{\gri}\prod_{\ell\in\intent{1,d} }
\pr{\operatorname{Id}-U^{\gr{e_\ell}}}  D_{\gr{0}}^{\pr{\emptyset}}}_{\B} \leq 
U^{\gr{1}}\max_{J\subset\intent{1,d}}U^{n\gr{1_J}}\pr{ 
\norm{D_{\gr{0}}^{\pr{\emptyset}}}_{\B}}
\end{equation}
from which it follows that
\begin{equation}
 \PP\pr{  \frac 1{2^{Nd/p}}  \max_{1\leq n\leq 2^N} 
\norm{\sum_{\gr{1}\imd\gri\imd 
n\gr{1}}U^{\gri}\prod_{\ell\in\intent{1,d}\setminus I}
\pr{\operatorname{Id}-U^{\gr{e_\ell}}}  D_{\gr{0}}^{\pr{I}}}_{\B}>\eps  
} \leq 2^{Nd}\PP\pr{\norm{D_{\gr{0}}^{\pr{\emptyset}}}_{\B}>\eps 2^{Nd/p}   }
\end{equation}
and the convergence of the series in 
\eqref{eq:cond_suffisante_conv_ps_orthomartingale_cobord__carres_bis} is a 
consequence of the fact that $\norm{D_{\gr{0}}^{\pr{\emptyset}}}_{\B}$ belongs 
to $\mathbb L^p$. For $\emptyset\subsetneq I\subsetneq \intent{1,d}$, 
we use the same symmetry argument to deal with the case $I=\intent{1,\ell_0}$ 
for some $\ell_0\in\intent{1,d-1}$. Denoting as before 
$\gr{i'}=\pr{i_\ell}_{\ell\in\intent{1,\ell_0}}$ for 
$\gri=\pr{i_\ell}_{\ell\in\intent{1,d}}$, we have 
\begin{equation}
 \max_{1\leq n\leq 2^N} 
\norm{\sum_{\gr{1}\imd\gri\imd 
n\gr{1}}U^{\gri}\prod_{\ell\in\intent{\ell_0+1,d}}
\pr{\operatorname{Id}-U^{\gr{e_\ell}}}  D_{\gr{0}}^{\pr{I}}}_{\B}
\leq U^{\gr{1''}}\max_{J\subset\intent{\ell_0+1,d}}
\max_{1\leq n\leq 2^N}U^{n\gr{1_J}}\norm{\sum_{\gr{1'}\imd \gr{i'}\imd n\gr{1'}  
} 
D_{\gr{0}}^{\pr{\intent{1,\ell_0}}}  }_{\B}
\end{equation}
hence 
\begin{multline}
 \PP\pr{ \max_{1\leq n\leq 2^N} 
\norm{\sum_{\gr{1}\imd\gri\imd 
n\gr{1}}U^{\gri}\prod_{\ell\in\intent{1,d}\setminus I}
\pr{\operatorname{Id}-U^{\gr{e_\ell}}}  D_{\gr{0}}^{\pr{I}}}_{\B}>\eps 
2^{Nd/p}}\\
\leq 2^{d+\ell_0 N}\PP\pr{ \max_{1\leq n\leq 
2^N}U^{n\gr{1_J}}\norm{\sum_{\gr{1'}\imd \gr{i'}\imd n\gr{1'}  
} 
D_{\gr{0}}^{\pr{\intent{1,\ell_0}}}  }_{\B}>\eps 2^{Nd/p}   }.
\end{multline}
The control of the tail is done in a similar way, this time with $2^{Nd/p}$ as 
a truncation level, which leads to show the convergence of analogous series as 
in \eqref{eq:tail_series_for_Hannan_rect1} and 
\eqref{eq:tail_series_for_Hannan_rect2} but without the term $k^{d-1}$
and where $k$ is a multiple of $d$, which follows from the fact that 
$\norm{D_{\gr{0}}^{\pr{\intent{1,\ell_0}}}  }_{\B}$ belongs to $\mathbb L^p$. 
This ends the proof Theorem~\ref{thm:strong_law_large_stationary_squares}.

\subsection{Proof of Theorem~\ref{thm:Lp_law_large_stationary}}

Define $X_{\gri}^{\pr{K}}$ as in \eqref{eq:def_de_X_i^k}.
Observe that $X_{\gri}=X_{\gri}^{\pr{K}}+\sum_{\gk:  \abs{\gk-\gri}_\infty> K}
P_{\gk}\pr{X_{\gri}}$, hence 
\begin{multline}
\frac 1{\abs{\grN}^{1/p}}\norm{\max_{\gr{1}\imd\grn\imd\grN}
\norm{\sum_{\gr{1}\imd 
  \gri\imd\grn}X_{\gri}}_{\B} }_{p}\\ \leq 
  \frac 1{\abs{\grN}^{1/p}}\norm{\max_{\gr{1}\imd\grn\imd\grN}\norm{\sum_{\gr{1}\imd 
  \gri\imd\grn}X_{\gri}^{\pr{K}}}_{\B} }_{p}+
  \sum_{\gk:  \abs{\gk-\gri}_\infty> K}\frac 1{\abs{\grN}^{1/p}}\norm{\max_{\gr{1}\imd\grn\imd\grN}\norm{\sum_{\gr{1}\imd 
  \gri\imd\grn}P_{\gri+\gk}\pr{X_{\gri}}}_{\B} }_{p}.
\end{multline}
Moreover, using Corollary~\ref{cor:moments_max_ordre_r_orthomartingale_Banach} 
then \eqref{eq:stationarity_projecteurs}, one has 
\begin{align}
\frac 1{\abs{\grN}^{1/p}}\norm{\max_{\gr{1}\imd\grn\imd\grN}\norm{\sum_{\gr{1}\imd 
  \gri\imd\grn}P_{\gri+\gk}\pr{X_{\gri}}}_{\B} }_{p}
  &\leq \frac{C\pr{\B,d,p} }{\abs{\grN}^{1/p}}
  \pr{\sum_{\gr{1}\imd 
  \gri\imd\grN}\norm{P_{\gri+\gk}\pr{X_{\gri}} }_{\B,p}^p}^{1/p}\\
  &\leq C\pr{\B,d,p}\norm{P_{ \gk}\pr{X_{\gr{0}}} }_{\B,p}
\end{align}
hence in view of assumption \eqref{eq:Hannan_el_p}, it suffices to prove that 
for each $K$, 
\begin{equation}
   \lim_{R\to \infty  }\sup_{\grN\smd\gr{1}, 
\max\grN \geq R}\frac 1{\abs{\grN}^{1/p}}\norm{\max_{\gr{1}\imd\grn\imd\grN}\norm{\sum_{\gr{1}\imd 
  \gri\imd\grn}U^{\gri}X_{\gr{0}}^{\pr{K}}}_{\B} }_{p}=0.
\end{equation}
 In view of the decomposition
\eqref{eq:decomposition_de_X_i^k}, this reduces to check that for 
each $I\subset\intent{1,d}$, 
\begin{equation}\label{eq:dec_orthom_cob_loi_Lp_ens_I}
   \lim_{R\to \infty  }\sup_{\grN\smd\gr{1}, 
\max\grN \geq R}\frac 1{\abs{\grN}^{1/p}}\norm{ \max_{\gr{1}\imd\grn\imd\grN}\norm{\sum_{\gr{1}\imd 
  \gri\imd\grn}\prod_{\ell\in\intent{1,d}\setminus I}
\pr{\Id-U^{\gr{e_\ell}}}U^{\gri} D_{\gr{0}}^{\pr{I}}}_{\B}  }_{p}=0.
\end{equation}
The case $I=\intent{1,d}$ corresponds to Theorem~\ref{thm:loi_faible_des_grands_nombres_orthomartingale}.
Moreover, for $I=\emptyset$, using $\norm{\max_{j\in J}\norm{Y_j}_{\B}  }_p\leq \card{J}^{1/p}\max_{j\in J}\norm{Y_j}_{\B,p}$,  one has 
\begin{align}
\norm{\max_{\gr{1}\imd\gri\imd\grN}\norm{\sum_{\gr{1}\imd 
  \gri\imd\grn}\prod_{\ell\in\intent{1,d} }
\pr{\Id-U^{\gr{e_\ell}}}  D_{\gri}^{\pr{\emptyset}}}_{\B}  }_{p}&\leq 2^d
\norm{\max_{\gr{1}\imd\gri\imd\grN+\gr{1}}\norm{ 
U^{\gri}D_{\gr{0}}^{\pr{\emptyset}}
 }_{\B}}_p\\
 &\leq 2^d\tau+\abs{\grN+\gr{1}}^{1/p}\norm{D_{\gr{0}}^{\pr{\emptyset}}
 \ind{\norm{D_{\gr{0}}^{\pr{\emptyset}}}_{\B}>\tau}}_p
\end{align}
hence \eqref{eq:dec_orthom_cob_loi_Lp_ens_I} holds for $I=\emptyset$.
Since the role played by the coordinates is symmetric, it suffices to 
treat the case $I=\intent{1,\ell_0}$ for each $\ell_0\in\intent{1,d-1}$. 
For such an $\ell_0$, denote for $\gri=\pr{i_\ell}_{\ell\in\intent{1,d}}\in\N^d$, $\gr{i'}:=\pr{i_\ell}_{\ell\in\intent{1,\ell_0}}$ and 
$\gr{i''}:=\pr{i_\ell}_{\ell\in\intent{\ell_0+1,d}}$ and similarly for $\gr{n'}, 
\gr{n''}$, $\gr{1'}$ and $\gr{1''}$. One has 
\begin{equation}
\max_{\gr{1}\imd\grn\imd\grN}\norm{\sum_{\gr{1}\imd 
  \gri\imd\grn}\prod_{\ell\in\intent{1,d}\setminus I}
\pr{\Id-U^{\gr{e_\ell}}} U^{\gri} D_{\gr{0}}^{\pr{I}}}_{\B}
\leq \max_{\gr{1''}\imd\gr{i''}\imd\gr{N''}+\gr{1}}
U^{\gr{i''}}
\max_{\gr{1'}\imd\gr{n'}\imd \gr{N'}}\norm{
\sum_{\gr{1'}\imd \gr{i'}\imd\gr{n'}}
U^{\gr{i'}}D_{\gr{0}}^{\pr{I}}}_{\B}.
\end{equation}
and by Theorem~\ref{thm:loi_faible_des_grands_nombres_orthomartingale}, the 
family 
\begin{equation*}
\ens{M_{\gr{N'}}:=\frac 1{\abs{\gr{N'}} }\max_{\gr{1'}\imd\gr{n'}\imd \gr{N'}}\norm{
\sum_{\gr{1'}\imd \gr{i'}\imd\gr{n'}}
U^{\gr{i'}}D_{\gr{0}}^{\pr{I}}}_{\B}^p,\gr{N'}\in\N^{\ell_0}},
\end{equation*}
 is uniformly integrable and $\lim_{R\to\infty}\max_{\gr{N'}:\max\gr{N'}\geq R}\norm{M_{\gr{N'}}}_{\B,p}=0$. With the observation that 
 \begin{align}
 A_{\grN}&:=\frac 
1{\abs{\grN}^{1/p}}\norm{\max_{\gr{1}\imd\grn\imd\grN}\norm{\sum_{\gr{1}\imd 
  \gri\imd\grn}\prod_{\ell\in\intent{1,d}\setminus I}
\pr{\Id-U^{\gr{e_\ell}}} U^{\gri} D_{\gr{0}}^{\pr{I}}}_{\B}}_p \\
&\leq \frac 
1{\abs{\gr{N''}}^{1/p}}\max_{\gr{1''}\imd\gr{i''}\imd\gr{N''}+\gr{1}}
U^{\gr{i''}}M^{1/p}_{\gr{N'}},
 \end{align}
 one get 
 \begin{equation}
 A_{\grN}\leq  2^p\min\ens{\norm{M_{\gr{N'}}^{1/p}}_{\B,p}, 
 \frac{\tau}{\abs{\gr{N''}}^{1/p}   }+ \norm{M_{\gr{N'}}\ind{\norm{M_{\gr{N'}}}_{\B}>\tau}  }_{\B,p}  }.
 \end{equation}
This ends the proof of Theorem~\ref{thm:loi_faible_des_grands_nombres_orthomartingale}.

\subsection{Proof of the results of Section~\ref{sec:LLN_shifts}}

As pointed out before the statements of theorems, we can decompose the
random field $\pr{X_{\gri}}_{\gri\in\Z^d}$ as a sum indexed by subsets $I$ of $\intent{1,d}$ of random fields whose coordinates are the restriction of those of $\gri$ to the set $I$, namely,
\begin{equation}
 X_{\gri}=\sum_{I\subset\intent{1,d},\card{I}\geq d_0}X_{\gr{i_I}}^I,
\end{equation}
where $X_{\gr{i_I}}^I$ is defined as in \eqref{eq:def_X_gri_I_d_shifts}.
Consequently, for each $\grn\smd\gr{1}$,
\begin{equation}
 \frac 1{\pi_{d_0,p}\pr{\grn}}\norm{\sum_{\gr{1}\imd\gri\imd\grn}X_{\gri}}_{\B}
 \leq \sum_{I\subset\intent{1,d},\card{I}\geq d_0}
 \frac 1{\prod_{\ell\in I}n_i^{1/p}}
 \norm{\sum_{\gr{1_I}\imd\gr{i_I}\imd\gr{n_I}}X_{\gr{1_I}}^I}_{\B}.
\end{equation}
 We are thus reduced to show that conditions \eqref{eq:d_shifts_hype_dep_p} and \eqref{eq:d_shifts_hype_dep_p_d-1} imply that for each $I\subset\intent{1,d}$ having $d_0$ or more elements, the random field $\pr{X_{\gr{i_I}}^I }_{\gr{i_I}\in\Z^I}$
 satisfies the assumptions of Theorems~\ref{thm:strong_law_large_stationary},  \ref{thm:strong_law_large_stationary_squares} and
 \ref{thm:Lp_law_large_stationary}. By the symmetric role played by the coordinates, it suffices to show that
\eqref{eq:d_shifts_hype_dep_p} (respectively  \eqref{eq:d_shifts_hype_dep_p_d-1}) implies that
 for each $\ell_0\in\intent{d_0,d}$,
\begin{equation}
\sum_{\gk\in\Z^{\ell_0}}\norm{P_{\gk}\pr{X^{\intent{1,\ell_0}}_{\gr{0}}}}_{\B,p}<\infty
\end{equation}
(respectively
\begin{equation}
\sum_{\gk\in\Z^{\ell_0}}\norm{P_{\gk}\pr{X^{\intent{1,\ell_0}}_{\gr{0}}}}_{\B,p,d-1}<\infty).
\end{equation}
To do so, we shall first prove that for $q\geq 0$,
\begin{equation}\label{eq:cond_norm_Lpq_coeff_d_shifts}
\forall\ell\in\intent{1,d}, \sum_{i\in\Z} 
\abs{i}^{d-1}\pr{\delta^{\pr{\ell}}_{\B,p\alpha,q}\pr{i}}^\alpha
<\infty
\end{equation}
implies that
\begin{equation}\label{eq:Hannan_el_pq_dshifts}
\sum_{\gk\in\Z^d}\norm{P_{\gk}\pr{X_{\gr{0}}}}_{\B,p,q}<\infty,
\end{equation}
then that \eqref{eq:Hannan_el_pq_dshifts} implies that for each $\ell_0\in\intent{d_0,d}$,
\begin{equation}\label{eq:Hannan_el_pq_dshifts_pour_XiI}
\sum_{\gk\in\Z^{\ell_0}}\norm{P_{\gk}\pr{X^{\intent{1,\ell_0}}_{\gr{0}}}}_{\B,p,q}<\infty.
\end{equation}
\begin{itemize}
  \item Proof that \eqref{eq:cond_norm_Lpq_coeff_d_shifts}
  implies \eqref{eq:Hannan_el_pq_dshifts}.

  First, observe that by commutativity of the operators $P_{k
    \gr{e}_\ell}$ and the fact that $\norm{P_{\grk-k_\ell\gr{e}_\ell}\pr{X_0}}_{\B,p,q}\leq 2^{d-1}\norm{X_{\gr{0}}}_{\B,p,q}$, we get that for each $\grk\in\Z^d$,
  \begin{equation}
    \norm{P_{\gk}\pr{X_{\gr{0}}}}_{\B,p,q}\leq \min_{1\leq\ell\leq d}
    \norm{P_{k
        \gr{e}_\ell}\pr{X_{\gr{0}}}}_{\B,p,q}.
  \end{equation}
  Define the random variable $X_{\gr{0}}^{\pr{\grk,\ell}}$ by
  \begin{equation}
    X_{\gr{0}}^{\pr{\grk,\ell}}=g\pr{ f_1\pr{ \pr{\eps^{*,\pr{1}}_{i_1-u_1}}_{u_1\in\Z} },\dots,
f_d\pr{ \pr{\eps^{*,\pr{d}}_{i_d-u_d}}_{u_d\in\Z} }   },
  \end{equation}
  where $\eps^{*,\pr{\ell'}}_{u}=\eps^{\pr{\ell'}}$ if $\ell\neq \ell'$, $\eps^{*,\pr{\ell}}_u=\eps_u$ for $u\neq k_\ell$ and
  $\eps^{*,\pr{\ell}}_{k_\ell}=\eps^{',\pr{\ell}}_{k_\ell}$. In other words, only the random variable $\eps^{\pr{\ell}}_{k_\ell}$ is replaced by a copy independent of the sequences $\pr{\eps^{\pr{q}}_{u_q}}_{u_q\in\Z^d}$.
  Since $P_{k_\ell\gr{e}_\ell}\pr{X_{\gr{0}}^{\pr{\grk,\ell}}}=0$, we get that
    \begin{equation}
    \norm{P_{\gk}\pr{X_{\gr{0}}}}_{\B,p,q}\leq 2\min_{1\leq\ell\leq d}
    \norm{X_{\gr{0}}- X_{\gr{0}}^{\pr{\grk,\ell}}}_{\B,p,q}.
  \end{equation}.
  Using Hölder regularity of $g$ and inequality \eqref{eq:norme_el_pq_de_puissances}, we derive that
  \begin{align}
   \norm{P_{\gk}\pr{X_{\gr{0}}}}_{\B,p,q}
   &\leq 2C \min_{1\leq\ell\leq d}
   \norm{\abs{f_\ell\pr{\pr{\eps^{\pr{\ell},*  }_{k_\ell-u_\ell} }_{u_\ell\in\Z}
}- f_\ell\pr{\pr{\eps^{\pr{\ell}  }_{k_\ell-u_\ell} }_{u_\ell\in\Z}
}}^{\alpha} }_{\B,p,q}\\
&\leq 2C\kappa_{\alpha,p,q} \min_{1\leq\ell\leq d} \norm{ f_\ell\pr{\pr{\eps^{\pr{\ell},*  }_{k_\ell-u_\ell} }_{u_\ell\in\Z}
}- f_\ell\pr{\pr{\eps^{\pr{\ell}  }_{k_\ell-u_\ell} }_{u_\ell\in\Z}
}  }_{\B,p\alpha,q}^\alpha\\
&=2C\kappa_{\alpha,p,q} \min_{1\leq\ell\leq d}
\pr{\delta^{\pr{\ell}}_{\B,p\alpha,q}\pr{k_\ell}}^\alpha.
  \end{align}
  Defining $a_k:=\max_{1\leq\ell\leq d}\pr{\delta^{\pr{\ell}}_{\B,p\alpha,q}\pr{k}}^\alpha$, we are thus reduced to prove that
  $\sum_{\grk\in\Z^d}\min_{1\leq\ell\leq d}a_{k_\ell}<\infty$.
  To do so, take a bijective map $\tau\colon \N\to\Z$ such that the sequence 
$\pr{b_i}_{i\in\N}=\pr{a_{\tau\pr{i}}}_{i\in\N}$ is non-increasing. We have to 
show that $\sum_{i_1,\dots,i_d\in\N}c_{i_1,\dots,i_d}<\infty$, where 
$c_{i_1,\dots,i_d}=\min_{1\leq \ell\leq d}b_{i_\ell}$. By invariance of
$c_{i_1,\dots,i_d}$ under permutation of the indexes $i_1,\dots,i_d$, it 
suffices to prove that $\sum_{i_1,\dots,i_d,  0\leq i_1, \dots, i_{d-1}\leq 
i_d}c_{i_1,\dots,i_d}<\infty$. Since $\pr{b_i}_{i\geq 1}$ is non-increasing, we 
derive that for each $i_d\geq 0$, $\sum_{0\leq i_1,\dots,i_{d-1}\leq i_d}
\leq b_{i_d} i_d^{d-1}$. Summing over $i_d$ and using 
\eqref{eq:cond_norm_Lpq_coeff_d_shifts} allows us to derive 
\eqref{eq:Hannan_el_pq_dshifts}.

  \item Proof that \eqref{eq:Hannan_el_pq_dshifts} implies 
\eqref{eq:Hannan_el_pq_dshifts_pour_XiI}
   
   From the definition of $X_{\gr{0}}^{\intent{1,\ell_0}}$ given 
in \eqref{eq:def_X_gri_I_d_shifts}, the following equality holds for each 
$\grk\in\Z^{\ell_0}$:
\begin{equation}\label{eq:projecteurs_X0I}
 P_{\grk}\pr{X_{\gr{0}}^{\intent{1,\ell_0}}  
}=\sum_{K\subset\intent{1,\ell_0} 
}\pr{-1}^{\card{K}}\sum_{J\subset\intent{1,\ell_0} }\pr{-1}^{\card{J}}
\E{\E{X_{\gr{0}} \mid\Gca_J } \mid\Fca_{\grk-\ind{K}}  }.
\end{equation}
Observe that if $J$ is such $\intent{1,\ell_0}\setminus J$ contains some $j_0$, 
then for each $K\subset\intent{1,\ell_0}$ such that $j_0\notin K$,
\begin{equation}
 \E{\E{X_{\gr{0}} \mid\Gca_J } \mid\Fca_{\grk-\ind{K}}  }
 =\E{\E{X_{\gr{0}} \mid\Gca_J } \mid\Fca_{\grk-\ind{K \cup\ens{j_0}}}  }.
\end{equation}
As a consequence, only the term for $J=\intent{1,\ell_0}$ in the right hand 
side of \eqref{eq:projecteurs_X0I} remains  and using commutativity of 
$\pr{\Fca_{\gri}}_{\gri\in\Z^d}$ gives 
\begin{equation}
 P_{\grk}\pr{X_{\gr{0}}^{\intent{1,\ell_0}}  
}=\pr{-1}^{\ell_0}\sum_{K\subset\intent{1,\ell_0} 
}\pr{-1}^{\card{K}} 
\E{\E{X_{\gr{0}} \mid\mid\Fca_{\grk-\ind{K}}  } \mid\Gca_{\intent{1,\ell_0}}  }
\end{equation}
and it follows that 
\begin{equation}
 \norm{ P_{\grk}\pr{X_{\gr{0}}^{\intent{1,\ell_0}}  }}_{\B,p,q}\leq 
 \norm{P_{k_1,\dots,k_{\ell_0},0,\dots,0  }\pr{X_{\gr{0}}}  }_{\B,p,q},
\end{equation}
from which \eqref{eq:Hannan_el_pq_dshifts_pour_XiI} can be easily derived.
\end{itemize}
This ends the proof of the results of Section~\ref{sec:LLN_shifts}.

\subsection{Proof of the results of Section~\ref{sec:lgn_Bernoulli random 
fields}}

The proof of the results of Section~\ref{sec:lgn_Bernoulli random 
fields}  will first require some 
preliminary notations and intermediate results.
Define for $m\geq 0$ and $\gri\in\Z^d$ the random variables
\begin{equation}
 Y_{\gri,m}=\E{X_{\gri}\mid\sigma\pr{\eps_{\grk}:
\norm{\grk-\gri}_\infty\leq m}},\quad  X_{\gri,m}=Y_{\gri,m}-Y_{\gri,m-1}
\end{equation}
 and $Y_{\gri,-1}=0$.
Then $X_{\gri}=\sum_{m=0}^\infty X_{\gri,m}$ and the convergence holds in the 
almost sure sense. 
For a fixed $m$, define $m':=2m+1$ and for $\gr{a}\in\Z^d$ such that 
$\gr{1}\imd\gr{a}\imd m'\gr{1}$, let $I_{\gr{a}}$ be set of elements 
$\gri\in\Z^d$ such that there exists $\grj\in\Z^d$ for which 
$\gri=m'\grj+\gr{a}$. Notice that $\pr{X_{\gri,m}}_{\gri\in I_{\gr{a}}}$ is an 
i.i.d.\ random field, or equivalently, that 
$\pr{X_{m'\grj+\gr{a}},m}_{\grj\in\Z^d}$ is an i.i.d.\ random field. 

Also, observe that for each integer $m\geq 0$ and each $\grn\smd\gr{1}$, it is 
possible to bound the partial sums of $\pr{X_{\gri}}_{\gri\in\Z^d}$ over 
rectangles via partial sums of the i.i.d.\ random fields 
$\pr{X_{m'\grj+\gr{a}},m}_{\grj\in\Z^d}$, namely, 
\begin{equation}\label{eq:decomposition_champ_m_dependent}
 \norm{\sum_{\gr{1}\imd\gri\imd \grn}X_{\gri} }_{\B}
 \leq \sum_{\gr{1}\imd\gr{a}\imd m'\gr{1}}
 \sum_{\gr{\delta}\in\ens{0,1}^d}
 \norm{\sum_{\gr{0}\imd\grj \imd \ent{\frac{1}{m'}\grn     } -\gr{\delta} }
 X_{m'\grj+\gr{a},m}}_{\B}, 
\end{equation}
where for $\gr{x}=\pr{x_\ell}_{\ell\in\intent{1,d}}\in\R^d$, $\ent{\gr{x}}
=\pr{\ent{x_\ell}}_{\ell\in\intent{1,d}}$ and for a real number $t$, $\ent{t}$ 
is the unique integer satisfying $\ent{t}\leq t<\ent{t}+1$.
Indeed, since the sets $I_{\gr{a}}$, $\gr{1}\imd\gr{a}\imd m'\gr{1}$ are 
pairwise disjoint, the following inequality takes place 
\begin{equation}
 \norm{\sum_{\gr{1}\imd\gri\imd \grn}X_{\gri} }_{\B}
 \leq \sum_{\gr{1}\imd\gr{a}\imd m'\gr{1}}
 \norm{\sum_{\gr{1}\imd\gri\imd \grn,\gri\in I_{\gr{a}}}X_{\gri} }_{\B}.
\end{equation}
Then we express $\gri\in I_{\gr{a}}$ as $\gri=m'\grj+\gr{a}$ and translate 
the inequalities $\gr{1}\imd\gri\imd \grn$ as  
$\gr{0}\imd\grj\imd\ent{m^{-1}\pr{\grn -\gr{a}}}$. The sum over 
$\gr{\delta}\in\ens{0,1}^d$ comes from the fact that for each $\ell$, 
$\ent{m^{-1}\pr{n_\ell 
-a_\ell}}\in\ens{\ent{m^{-1}n_\ell},\ent{m^{-1}n_\ell}-1}$.

We will express bounds on norm of maximal functions in terms of 
some norm of $X_{\gr{0},m}$. For this reason, we need the following intermediate 
result.

\begin{Lemma}\label{lem:norme_Orlicz_X_0m}
Let $1<p\leq r$ and $q\geq 0$. There exists a constant $C$ depending only on 
$\B$, $p$ and $q$ such that 
\begin{equation}
\norm{X_{\gr{0},m}}_{\B,p,q}\leq C\pr{\sum_{ \gri:\norm{\gri}_\infty 
=m}\pr{\delta_{\B,p,q}\pr{\gri}}^p  }^{1/p}.
\end{equation}
\end{Lemma}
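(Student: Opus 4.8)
The plan is to realise $X_{\gr{0},m}$ as the terminal value of an ordinary (one-dimensional) $\B$-valued martingale whose increments are indexed by the shell $\mathcal S_m:=\ens{\grk\in\Z^d:\norm{\grk}_\infty=m}$, to control each increment in $\norm{\cdot}_{\B,p,q}$ by a single physical dependence coefficient, and then to recombine the increments by a martingale moment inequality. This last ingredient is essential: a plain triangle inequality over the $\card{\mathcal S_m}\asymp m^{d-1}$ increments would cost a factor $\card{\mathcal S_m}^{1-1/p}$, which the statement does not allow.

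Concretely, enumerate $\mathcal S_m=\ens{\grk^{(1)},\dots,\grk^{(L)}}$ with $L=\card{\mathcal S_m}$ and set, for $0\leq j\leq L$,
\begin{equation*}
\mathcal G_j:=\sigma\pr{\eps_{\grk}:\norm{\grk}_\infty\leq m-1}\vee\sigma\pr{\eps_{\grk^{(1)}},\dots,\eps_{\grk^{(j)}}},
\end{equation*}
so that $\mathcal G_0=\sigma\pr{\eps_{\grk}:\norm{\grk}_\infty\leq m-1}$ and $\mathcal G_L=\sigma\pr{\eps_{\grk}:\norm{\grk}_\infty\leq m}$; by the very definition of $X_{\gr{0},m}$ one has $X_{\gr{0},m}=\E{X_{\gr{0}}\mid\mathcal G_L}-\E{X_{\gr{0}}\mid\mathcal G_0}=\sum_{j=1}^{L}D_j$ with $D_j:=\E{X_{\gr{0}}\mid\mathcal G_j}-\E{X_{\gr{0}}\mid\mathcal G_{j-1}}$, and $\pr{D_j}_{j=1}^{L}$ is a $\B$-valued martingale difference sequence for the filtration $\pr{\mathcal G_j}_{j=0}^{L}$. (The case $m=0$, where $\mathcal G_0$ is trivial and $L=1$, is identical, using that $X_{\gr{0}}$ is centered.)

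The next step identifies each $D_j$ with a coupling increment. Let $\pr{\widetilde\eps^{\pr{j}}_{\gru}}_{\gru\in\Z^d}$ agree with $\pr{\eps_{\gru}}_{\gru\in\Z^d}$ except that $\widetilde\eps^{\pr{j}}_{\grk^{(j)}}$ is an independent copy of $\eps_{\grk^{(j)}}$, and put $\widetilde X_{\gr{0}}^{\pr{j}}:=f\pr{\pr{\widetilde\eps^{\pr{j}}_{-\grk}}_{\grk\in\Z^d}}$. Since $\grk^{(j)}$ is not among the innovations generating $\mathcal G_{j-1}$ and the replacement does not alter the joint conditional law, given $\mathcal G_{j-1}$, of the innovations that are still free, one gets $\E{\widetilde X_{\gr{0}}^{\pr{j}}\mid\mathcal G_{j-1}}=\E{X_{\gr{0}}\mid\mathcal G_{j-1}}$; and since $\widetilde X_{\gr{0}}^{\pr{j}}$ is independent of $\eps_{\grk^{(j)}}$ while $\mathcal G_j=\mathcal G_{j-1}\vee\sigma\pr{\eps_{\grk^{(j)}}}$, one also gets $\E{\widetilde X_{\gr{0}}^{\pr{j}}\mid\mathcal G_{j}}=\E{\widetilde X_{\gr{0}}^{\pr{j}}\mid\mathcal G_{j-1}}$. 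Combining the two, $D_j=\E{X_{\gr{0}}-\widetilde X_{\gr{0}}^{\pr{j}}\mid\mathcal G_j}$, so conditional Jensen for the convex function $\varphi_{p,q}$ gives $\norm{D_j}_{\B,p,q}\leq\norm{X_{\gr{0}}-\widetilde X_{\gr{0}}^{\pr{j}}}_{\B,p,q}$; by strict stationarity of the i.i.d.\ field $\pr{\eps_{\grk}}_{\grk}$ this last quantity equals $\delta_{\B,p,q}\pr{-\grk^{(j)}}$, and $j\mapsto-\grk^{(j)}$ is a bijection of $\ens{1,\dots,L}$ onto $\mathcal S_m$.

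It remains to sum over $j$. Being $r$-smooth with $p\leq r$, $\B$ is $p$-smooth, hence of martingale type $p$; applying the corresponding martingale moment inequality in $\mathbb L_{p,q}\pr{\B}$ — one of the auxiliary estimates gathered in the Appendix — to $\pr{D_j}_{j=1}^{L}$ yields
\begin{equation*}
\norm{X_{\gr{0},m}}_{\B,p,q}\leq C_{\B,p,q}\pr{\sum_{j=1}^{L}\norm{D_j}_{\B,p,q}^{p}}^{1/p}\leq C_{\B,p,q}\pr{\sum_{\gri:\norm{\gri}_\infty=m}\pr{\delta_{\B,p,q}\pr{\gri}}^{p}}^{1/p},
\end{equation*}
with $C_{\B,p,q}$ independent of $m$ and of $L$, which is the claim. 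I expect this last step — establishing (or invoking) the martingale moment inequality in the Orlicz space $\mathbb L_{p,q}$ — to be the main obstacle; the secondary point requiring care is the verification of the two conditional-expectation identities above, which rests entirely on the independence structure of the innovation field.
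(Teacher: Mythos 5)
Your proposal is correct and follows essentially the same route as the paper: write $X_{\gr{0},m}=Y_{\gr{0},m}-Y_{\gr{0},m-1}$ as a telescoping one-dimensional martingale difference sequence obtained by revealing the innovations on the shell $\ens{\grk:\norm{\grk}_\infty=m}$ one at a time, bound each increment by a physical dependence coefficient via the coupling and a contraction of conditional expectation in $\norm{\cdot}_{\B,p,q}$, and recombine with the Burkholder-type inequality of Proposition~\ref{prop:moments_ordre_pq_orthomartingale_Banach} (case $d=1$; for $q=0$ one may instead invoke Proposition~\ref{prop:moments_ordre_r_orthomartingale_Banach}). This is exactly the argument the paper sketches by reference to \cite{MR3963881} and \cite{MR4372657}.
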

\begin{proof}
 This follows the idea of proof of Corollary~1 in \cite{MR3963881}  and 
Corollary~1.5 in \cite{MR4372657}, which is to express $X_{\gr{0},m}$ as a sum 
of a martingale difference sequence and apply a Burkholder type inequality, 
namely, Proposition~\ref{prop:moments_ordre_pq_orthomartingale_Banach} 
in the case $d=1$.
\end{proof}

\begin{proof}[Proof of Theorem~\ref{thm:loi_grands_nombres_champs_bern_rect_ps}]
 We have to show that \eqref{eq:cond_cas_bern_rectangles} implies that for 
 each positive $\eps$, 
 \begin{equation}
  \lim_{N\to\infty}
  \PP\pr{\sup_{\grn\smd\gr{1},\max\grn\geq N}\frac 
1{\abs{\grn}^{1/p}}\norm{\sum_{\gr{1}\imd\gri\imd \grn}X_{\gri}}_{\B}>\eps    
}=0.
 \end{equation}
Using the decomposition $X_{\gri}=Y_{\gri,M-1}+\sum_{m=M}^\infty X_{i,m}$,
we derive that 
\begin{multline}
\PP\pr{\sup_{\grn\smd\gr{1},\max\grn\geq N}
\frac 1{\abs{\grn}^{1/p} }\norm{\sum_{\gr{1}\imd\gri\imd 
 \gr{n}}X_{\gri}}_{\B}>\eps } 
\\
\leq 
\PP\pr{\sup_{\grn\smd\gr{1},\max\grn\geq N}
\frac 
1{\abs{\grn}^{1/p}}\norm{\sum_{\gr{1}\imd\gri\imd 
\gr{n}}Y_{\gri,M-1}}_{\B}>\frac{\eps}2 
}+\PP\pr{\sup_{\grn\smd\gr{1},\max\grn\geq N}
\frac 1{\abs{\grn}^{1/p}}\norm{\sum_{m=M}^\infty\sum_{\gr{1}\imd\gri\imd 
\gr{n}}X_{\gri,m}}_{\B}>\frac{\eps}2 }\\
\leq 
\PP\pr{\sup_{\grn\smd\gr{1},\max\grn\geq N}
\frac 
1{\abs{\grn}^{1/p}}\norm{\sum_{\gr{1}\imd\gri\imd 
\gr{n}}Y_{\gri,M-1}}_{\B}>\frac{\eps}2 
}+\PP\pr{\sum_{m=M}^\infty\sup_{\grn\smd \gr{1}} 
\frac{1}{\abs{\grn}^{1/p}}\norm{\sum_{\gr{1}\imd\gri\imd 
\gr{n}}X_{\gri,m}}_{\B}>\frac{\eps}2 }.
\end{multline}

Observe that if $\norm{\grk}_{\infty}\geq M$, then 
$P_{\grk}\pr{Y_{\gr{0},M-1}}=0$ hence by 
Theorem~\ref{thm:strong_law_large_stationary}, one 
has 
\begin{equation}
\lim_{N\to\infty}\PP\pr{\sup_{\grn\smd\gr{1},\max\grn\geq N}
\frac 
1{\abs{\grn}^{1/p}}\norm{\sum_{\gr{1}\imd\gri\imd 
\gr{n}}Y_{\gri,M-1}}_{\B}>\frac{\eps}2 }=0.
\end{equation}
Using \eqref{eq:comparaison_norm_Lp_faible} and the triangle inequality for the 
norm $\norm{\cdot}_{\B,p,w}$, it suffices to show that 
\begin{equation}\label{eq:etape_demo_LGN_rect_bernoulli}
\sum_{m=1}^\infty\norm{\sup_{\grn\smd\gr{1}}
\frac 1{\abs{\grn}^{1/p} }\norm{\sum_{\gr{1}\imd\gri\imd 
\gr{n}}X_{\gri,m}}_{\B}}_{p,w}<\infty.
\end{equation}
To do so, we use \eqref{eq:decomposition_champ_m_dependent} and 
replacing $\grn$ by $m'\gr{n'}+\gr{b}$, where 
$\gr{0}\imd\gr{b}\imd\pr{m'-1}\gr{1}$, we
derive that for each fixed $m$, 
\begin{equation*}
 \norm{\sup_{\grn\smd\gr{1}}
\frac 1{\abs{\grn}^{1/p} }\norm{\sum_{\gr{1}\imd\gri\imd 
\gr{n}}X_{\gri,m}}_{\B}}_{p,w}
\leq  \sum_{\gr{1}\imd\gr{a}\imd m'\gr{1}}
 \sum_{\gr{\delta}\in\ens{0,1}^d}
 \norm{\sup_{\gr{n'}\smd\gr{0}}\sup_{\gr{0}\imd\gr{b}\imd \pr{m'-1}\gr{1}}
\frac 1{\abs{m\gr{n'}+\gr{b} }^{1/p}  }\norm{\sum_{\gr{0}\imd\grj\imd 
\gr{n'}-\delta}X_{m'\grj+\gr{a},m}}_{\B}}_{p,w}
\end{equation*}
and by Theorem~\ref{thm:loi_forte_des_grands_nombres_orthomartingale}, that 
\begin{equation}
  \norm{\sup_{\grn\smd\gr{1}}
\frac 1{\abs{\grn}^{1/p} }\norm{\sum_{\gr{1}\imd\gri\imd 
\gr{n}}X_{\gri,m}}_{\B}}_{p,w}
\leq Cm^{d\pr{1-1/p}}\norm{X_{0,m}}_{\B,p,d-1}.
\end{equation}
We conclude by Lemma~\ref{lem:norme_Orlicz_X_0m}.
\end{proof}

\begin{proof}[Proof of Theorem~\ref{thm:loi_grands_nombres_champs_bern_carres}]
 We have to show that 
\eqref{eq:cond_cas_bern_carres} implies that 
for each positive $\eps$, 
\begin{equation}\label{eq:a_montrer_pour_LGN_Bernoulli_carre}
\lim_{N\to\infty}\PP\pr{\sup_{n\geq N} 
\frac 1{n^{d/p} }\norm{\sum_{\gr{1}\imd\gri\imd n\gr{1}}X_{\gri}}_{\B}>\eps }=0.
\end{equation}
By replacing in the beginning of proof of 
Theorem~\ref{thm:loi_grands_nombres_champs_bern_rect_ps} the suprema  
$\sup_{\grn\smd\gr{1},\max\grn\geq N}$ by $\sup_{n\geq N}$ 
and $\grn$ by $n\gr{1}$ we reduce the proof to
\begin{equation}\label{eq:etape_demo_LGN_carres_bernoulli}
\sum_{m=1}^\infty\norm{\sup_{n\geq 1} 
\frac 1{n^{d/p} }\norm{\sum_{\gr{1}\imd\gri\imd 
n\gr{1}}X_{\gri,m}}_{\B}}_{p,w}<\infty.
\end{equation}

By \eqref{eq:decomposition_champ_m_dependent} applied with $\grn=n\gr{1}$, 
and by decomposing the supremum over $n$ according to the remainder of $n$ for 
the Euclidean division by $m'$, we derive that 
\begin{equation}\label{eq:control_fct_max_etape_intermediaire}
 \norm{\sup_{n\geq 1} 
\frac 1{n^{d/p} }\norm{\sum_{\gr{1}\imd\gri\imd 
n\gr{1}}X_{\gri,m}}_{\B}}_{p,w}\leq \sum_{\gr{1}\imd\gr{a}\imd m'\gr{1}}
 \sum_{\gr{\delta}\in\ens{0,1}^d} 
 \norm{\sup_{N\geq 1} 
\pr{\frac{1}{N m'} }^{d/p}\norm{\sum_{\gr{0}\imd\grj\imd 
N\gr{1}+\gr{\delta}}X_{m'\grj+\gr{a},m}}_{\B}}_{p,w}.
\end{equation}
A minor modification of the proof of 
Theorem~\ref{thm:loi_forte_grands_nombres_carres} shows that 
\eqref{eq:controle_fonction_maximale_carres_orthomartingale} holds with 
$S_{n\gr{1}}$ replaced by $\sum_{\gr{0}\imd\grj\imd 
N\gr{1}+\gr{\delta}}X_{m'\grj+\gr{a},m}$. Therefore, by 
\eqref{eq:control_fct_max_etape_intermediaire}, 
\begin{equation}
  \norm{\sup_{n\geq 1} 
\frac 1{n^{d/p} }\norm{\sum_{\gr{1}\imd\gri\imd 
n\gr{1}}X_{\gri,m}}_{\B}}_{p,w}\leq
Cm^{d\pr{1-1/p}}\norm{X_{\gr{0},m }}_{\B,p}
\end{equation}
and \eqref{eq:etape_demo_LGN_carres_bernoulli} follows from 
Lemma~\ref{lem:norme_Orlicz_X_0m}. 

\end{proof}

\begin{proof}[Proof of 
Theorem~\ref{thm:loi_grands_nombres_champs_bern_rectangles_Lp}]
 Using similar arguments as at the beginning 
of the proof of Theorem~\ref{thm:loi_grands_nombres_champs_bern_rect_ps}, it 
suffices to prove that 
\begin{equation}
 \sum_{m=0}^\infty \sup_{\grn\smd\gr{1}}\frac 1{\abs{\grn}^{1/p}}
 \norm{\sum_{\gr{1}\imd\gri\imd\grn}X_{\gri,m}  }_{\B,p}<\infty.
\end{equation}
By \eqref{eq:decomposition_champ_m_dependent}, it is possible to 
bound $\norm{\sum_{\gr{1}\imd\gri\imd\grn}X_{\gri,m}  }_{\B,p}$ by the sum of 
$m^d$ terms, each of them being the $\norm{\cdot}_{\B,p}$-norm of an 
independent random field on a rectangle having at most 
$\prod_{\ell=1}^d\pr{n_\ell/m'+1}$ elements hence by 
Corollary~\ref{cor:moments_max_ordre_r_orthomartingale_Banach}, we infer that 
\begin{equation}
 \sup_{\grn\smd\gr{1}}\frac 1{\abs{\grn}^{1/p}}
 \norm{\sum_{\gr{1}\imd\gri\imd\grn}X_{\gri,m}  }_{\B,p}
 \leq Cm^{d\pr{1-1/p}}\norm{X_{\gr{0},m }}_{\B,p}.
\end{equation}
We conclude by Lemma~\ref{lem:norme_Orlicz_X_0m}.
\end{proof}

\begin{proof}[Proof of Corollary~\ref{cor:linear_processes}]
In view of Theorems~\ref{thm:loi_grands_nombres_champs_bern_rect_ps}, 
\ref{thm:loi_grands_nombres_champs_bern_carres} and 
\ref{thm:loi_grands_nombres_champs_bern_rectangles_Lp}, it suffices to prove 
that there exists a constant $C$ such that for each $\gri\in\Z^d$, 
\begin{equation}
 \delta_{\B,p,d-1}\pr{\gri}\leq C\norm{A_{\gri}}_{\Bca\pr{\B}}^\alpha.
\end{equation}
To do so, observe that by \eqref{eq:g_Lipschitz} and linearity of $A_{\gri}$, 
the following inequalities hold almost surely
\begin{equation}
 \norm{X_{\gri}-X_{\gri}^*}_{\B}\leq K\pr{g}\norm{\sum_{\grk\in\Z^d}
 A_k\pr{\eps_{\gri-\grk}}-\sum_{\grk\in\Z^d}
 A_k\pr{\eps^*_{\gri-\grk}}
 }_{\B}^\alpha=K\pr{g}\norm{A_{\gri}\pr{\eps_{\gr{0}}  } -
A_{\gri}\pr{\eps'_{\gr{0}}  }  }_{\B}^\alpha.
\end{equation}
Since $A_{\gri}$ is bounded, we infer that 
\begin{equation}
  \norm{X_{\gri}-X_{\gri}^*}_{\B}\leq K\pr{g} 
\norm{A_{\gri}}_{\Bca\pr{\B}}^\alpha \pr{\norm{\eps_{\gr{0}}}_{\B}^\alpha+
\norm{\eps'_{\gr{0}}}_{\B}^\alpha}
\end{equation}
and we conclude by \eqref{eq:norme_el_pq_de_puissances}.
\end{proof}

\begin{appendices}
\section{Moment inequalities for orthomartingales}\label{apA}

First recall the following, rewritten in terms of norm $\norm{\cdot}_{\B,r}$.
 \begin{Proposition}[Proposition~A.1 in \cite{giraudo2022deviation}]
 \label{prop:moments_ordre_r_orthomartingale_Banach}
 Let $\pr{\B,\norm{\cdot}_{\B}}$ be a separable $r$-smooth Banach space.
 For each $d\geq 1$, there exists a constant $C\pr{\B}$ such that  for each orthomartingale difference
 random field $\pr{D_{\gri}}_{\gri\in\Z^d}$,
 \begin{equation}\label{eq:moments_ordre_r_orthomartingale_Banach}
   \norm{\sum_{\gr{1}\imd \gri\imd\grn} D_{\gri} }_{\B,r} 
  \leq C\pr{\B,d}
\pr{\sum_{\gr{1}\imd \gri\imd\grn}
 \norm{D_{\gri}}_{\B,r}^r}^{1/r}.
 \end{equation}
 \end{Proposition}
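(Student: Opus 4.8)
The plan is to prove \eqref{eq:moments_ordre_r_orthomartingale_Banach} by induction on the dimension $d$, peeling off one coordinate at a time so that the $d$-dimensional estimate is reduced to the one-dimensional martingale moment inequality. Throughout, one may first dispose of a trivial case: if some $D_{\gri}$ does not belong to $\mathbb L^r$, the right-hand side of \eqref{eq:moments_ordre_r_orthomartingale_Banach} is infinite and there is nothing to prove, so we may assume that every $D_{\gri}$ lies in $\mathbb L^r$.

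For the base case $d=1$ I would invoke the classical fact (Pisier) that an $r$-smooth Banach space $\pr{\B,\norm{\cdot}_{\B}}$ has martingale type $r$: passing to the equivalent norm whose modulus of smoothness is of power type $r$ provided by the definition of $r$-smoothness, there is a constant $C\pr{\B}$ such that $\norm{\sum_{i=1}^n\Delta_i}_{\B,r}^r\leq C\pr{\B}\sum_{i=1}^n\norm{\Delta_i}_{\B,r}^r$ for every $\B$-valued martingale difference sequence $\pr{\Delta_i}_{i=1}^n$, and reverting to the original norm only changes the constant. This is precisely \eqref{eq:moments_ordre_r_orthomartingale_Banach} for $d=1$.

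For the inductive step, assume \eqref{eq:moments_ordre_r_orthomartingale_Banach} in dimension $d-1$ and fix an orthomartingale difference random field $\pr{D_{\gri}}_{\gri\in\Z^d}$ with respect to a completely commuting filtration $\pr{\Fca_{\gri}}_{\gri\in\Z^d}$ and a box $\gr{1}\imd\gri\imd\grn$. Writing $\gri=\pr{\gri',i_d}$ and $\grn=\pr{\grn',n_d}$ with $\gri',\grn'\in\Z^{d-1}$, I would introduce the slices $V_{i_d}:=\sum_{\gr{1}'\imd\gri'\imd\grn'}D_{\pr{\gri',i_d}}$ for $1\leq i_d\leq n_d$ and proceed in two steps. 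First, I would check that $\pr{V_{i_d}}_{i_d=1}^{n_d}$ is a martingale difference sequence with respect to $\pr{\Fca_{\pr{\grn',i_d}}}_{i_d}$: measurability is immediate from $\Fca_{\pr{\gri',i_d}}\subset\Fca_{\pr{\grn',i_d}}$ when $\gri'\imd\grn'$, while for the vanishing of the conditional expectation I would apply the commutation identity \eqref{eq:def_filtration_commutante} to the pair $\pr{\gri',i_d}$ and $\pr{\grn',i_d-1}$, whose coordinatewise minimum is $\pr{\gri',i_d-1}$ since $\gri'\imd\grn'$, to get $\E{D_{\pr{\gri',i_d}}\mid\Fca_{\pr{\grn',i_d-1}}}=\E{D_{\pr{\gri',i_d}}\mid\Fca_{\pr{\gri',i_d-1}}}=0$, the last equality being the orthomartingale property in direction $d$; the base case then gives $\norm{\sum_{\gr{1}\imd\gri\imd\grn}D_{\gri}}_{\B,r}^r\leq C\pr{\B}\sum_{i_d=1}^{n_d}\norm{V_{i_d}}_{\B,r}^r$. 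Second, I would observe that for each fixed $i_d$ the random field $\pr{D_{\pr{\gri',i_d}}}_{\gri'\in\Z^{d-1}}$ is a $\pr{d-1}$-dimensional orthomartingale difference random field with respect to $\pr{\Fca_{\pr{\gri',i_d}}}_{\gri'\in\Z^{d-1}}$ — this filtration is again completely commuting because the coordinatewise minima involved keep the last coordinate fixed equal to $i_d$, and the identities $\E{D_{\pr{\gri',i_d}}\mid\Fca_{\pr{\gri'-\gr{e}_\ell,i_d}}}=0$ for $\ell\in\intent{1,d-1}$ are inherited from $\pr{D_{\gri}}_{\gri\in\Z^d}$ — so the induction hypothesis yields $\norm{V_{i_d}}_{\B,r}^r\leq C\pr{\B,d-1}^r\sum_{\gr{1}'\imd\gri'\imd\grn'}\norm{D_{\pr{\gri',i_d}}}_{\B,r}^r$. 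Summing over $i_d$ then produces \eqref{eq:moments_ordre_r_orthomartingale_Banach} in dimension $d$ with, for instance, $C\pr{\B,d}^r=C\pr{\B}C\pr{\B,d-1}^r$.

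The step I expect to be the main obstacle is the verification that the slices $V_{i_d}$ form a martingale difference sequence, since this is exactly where complete commutativity of the filtration is indispensable: for a general multi-indexed filtration one cannot pass from $\E{D_{\pr{\gri',i_d}}\mid\Fca_{\pr{\gri',i_d-1}}}=0$ to $\E{D_{\pr{\gri',i_d}}\mid\Fca_{\pr{\grn',i_d-1}}}=0$. Once that point, together with the analogous but easier observation that the slices remain orthomartingale difference random fields in dimension $d-1$, is in place, the rest of the argument is routine bookkeeping with the constants.
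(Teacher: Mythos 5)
Your proof is correct. The paper itself gives no argument for this proposition --- it is imported verbatim from \cite{giraudo2022deviation} --- so there is nothing internal to compare against, but your induction on $d$ is exactly the standard route one expects behind such a statement: the base case is Pisier's equivalence of $r$-smoothness with martingale type $r$, and the inductive step correctly isolates the two points that need the completely commuting hypothesis, namely that the slices $V_{i_d}=\sum_{\gr{1}'\imd\gri'\imd\grn'}D_{\pr{\gri',i_d}}$ are martingale differences for $\pr{\Fca_{\pr{\grn',i_d}}}_{i_d}$ (via $\E{\E{D_{\pr{\gri',i_d}}\mid\Fca_{\pr{\gri',i_d}}}\mid\Fca_{\pr{\grn',i_d-1}}}=\E{D_{\pr{\gri',i_d}}\mid\Fca_{\pr{\gri',i_d-1}}}=0$) and that the fixed-$i_d$ slice filtration is again completely commuting, so the $(d-1)$-dimensional hypothesis applies; the bookkeeping $C\pr{\B,d}^r=C\pr{\B}C\pr{\B,d-1}^r$ is fine.
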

 
Keeping in mind that an $r$-smooth Banach space is also $p$-smooth for $1<p\leq r$, we derive the following
 consequence of Proposition~\ref{prop:moments_ordre_r_orthomartingale_Banach} and iterations of Doob's inequality.
 
 \begin{Corollary} \label{cor:moments_max_ordre_r_orthomartingale_Banach}
  Let $\pr{\B,\norm{\cdot}_{\B}}$ be a separable $r$-smooth Banach space and let $1<p\leq r$. For each $d\geq 1$, there exists a constant $C\pr{\B,d,p}$ such that for each orthomartingale difference
 random field $\pr{D_{\gri}}_{\gri\in\Z^d}$,
 \begin{equation}\label{eq:moments_mac_ordre_r_orthomartingale_Banach}
     \norm{\max_{\gr{1}\imd\grn\imd\gr{N}}\norm{\sum_{\gr{1}\imd \gri\imd\grn} D_{\gri}}_{\B} }_{\R,p} 
  \leq C\pr{\B,d,p}
\pr{\sum_{\gr{1}\imd \gri\imd\gr{N}}
 \norm{D_{\gri}}_{\B,p}^p}^{1/p}.
 \end{equation}
 \end{Corollary}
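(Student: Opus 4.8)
The plan is to split the two difficulties in \eqref{eq:moments_mac_ordre_r_orthomartingale_Banach}: passing from the exponent $r$ to the smaller exponent $p$, and controlling the maximum over $\grn\imd\grN$. The first is a matter of smoothness; the second is a Cairoli-type iteration of Doob's inequality.

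First I would record that an $r$-smooth Banach space is $p$-smooth for every $p\in(1,r]$: with the renorming witnessing $r$-smoothness, for $0<t\le 1$ the difference quotient defining $p$-smoothness is dominated by the one defining $r$-smoothness because $t^{p}\ge t^{r}$, while for $t\ge 1$ the triangle inequality bounds the numerator by $2t\le 2t^{p}$. Hence Proposition~\ref{prop:moments_ordre_r_orthomartingale_Banach} applies with $r$ replaced by $p$, yielding a constant $C(\B,d)$ (now also depending on $p$) such that $\norm{S_{\grN}}_{\B,p}\le C(\B,d)\big(\sum_{\gr{1}\imd\gri\imd\grN}\norm{D_{\gri}}_{\B,p}^{p}\big)^{1/p}$, where $S_{\grn}=\sum_{\gr{1}\imd\gri\imd\grn}D_{\gri}$. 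It then remains to prove $\norm{\max_{\gr{1}\imd\grn\imd\grN}\norm{S_{\grn}}_{\B}}_{\R,p}\le\big(\tfrac{p}{p-1}\big)^{d}\norm{S_{\grN}}_{\B,p}$.

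For this I would peel off one coordinate at a time. For $0\le k\le d$ put $\mathcal{M}_{k}(\gr{m}):=\max_{1\le l_{j}\le m_{j},\,j\le k}\norm{S_{(l_{1},\dots,l_{k},m_{k+1},\dots,m_{d})}}_{\B}$, so that $\mathcal{M}_{0}(\grN)=\norm{S_{\grN}}_{\B}$ and $\mathcal{M}_{d}(\grN)$ is the quantity to bound, and note the identity $\mathcal{M}_{k}(\grN)=\max_{1\le l_{k}\le N_{k}}\mathcal{M}_{k-1}\big((N_{1},\dots,N_{k-1},l_{k},N_{k+1},\dots,N_{d})\big)$, whose value at $l_{k}=N_{k}$ is $\mathcal{M}_{k-1}(\grN)$. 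The crucial point is that $l_{k}\mapsto\mathcal{M}_{k-1}\big((N_{1},\dots,N_{k-1},l_{k},N_{k+1},\dots,N_{d})\big)$ is a nonnegative submartingale for the one-parameter filtration $\big(\Fca_{(N_{1},\dots,N_{k-1},l_{k},N_{k+1},\dots,N_{d})}\big)_{l_{k}\ge 1}$: for each fixed $(l_{1},\dots,l_{k-1})$ with $l_{j}\le N_{j}$, the commuting property \eqref{eq:def_filtration_commutante}, the identity $\E{D_{\gri}\mid\Fca_{\gri-\gr{e}_{k}}}=0$, and the fact that $\min\{\gri,(N_{1},\dots,N_{k-1},l_{k}-1,N_{k+1},\dots,N_{d})\}=\gri-\gr{e}_{k}$ whenever $i_{j}\le N_{j}$ for $j\neq k$ (as is the case here) together make $l_{k}\mapsto S_{(l_{1},\dots,l_{k-1},l_{k},N_{k+1},\dots,N_{d})}$ a $\B$-valued martingale; hence its norm is a submartingale, and a maximum of finitely many submartingales adapted to a common filtration is again a submartingale. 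Doob's $L^{p}$ maximal inequality ($p>1$) then gives $\norm{\mathcal{M}_{k}(\grN)}_{p}\le\tfrac{p}{p-1}\norm{\mathcal{M}_{k-1}(\grN)}_{p}$, and iterating over $k=d,d-1,\dots,1$ yields the desired bound, so the corollary holds with $C(\B,d,p)=\big(\tfrac{p}{p-1}\big)^{d}C(\B,d)$.

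A couple of routine points: if the right-hand side of \eqref{eq:moments_mac_ordre_r_orthomartingale_Banach} is infinite there is nothing to prove, and otherwise each $\norm{D_{\gri}}_{\B}$ lies in $\mathbb{L}^{p}$, so every $\mathcal{M}_{k}(\grN)$ is dominated by $\sum_{\gr{1}\imd\gri\imd\grN}\norm{D_{\gri}}_{\B}\in\mathbb{L}^{p}$, which supplies the integrability needed for Doob. The only genuinely delicate step is the submartingale verification in each direction — in particular, that freezing the remaining coordinates at $\grN$ and invoking \eqref{eq:def_filtration_commutante} turns the orthomartingale differences into a one-parameter martingale difference sequence; the rest is bookkeeping.
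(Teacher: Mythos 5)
Your proof is correct and follows essentially the same route as the paper, which obtains the corollary precisely by noting that an $r$-smooth space is $p$-smooth for $1<p\leq r$ (so Proposition~\ref{prop:moments_ordre_r_orthomartingale_Banach} applies with exponent $p$) and then iterating Doob's maximal inequality coordinate by coordinate; your Cairoli-type peeling argument, with the submartingale property in each direction justified via the commuting-filtration identity \eqref{eq:def_filtration_commutante} and the orthomartingale property, is exactly the intended "iterations of Doob's inequality," merely written out in full detail.
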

 
We will also need to control the Orlicz-norm associated to the function 
$\varphi_{p,q}$ defined in \eqref{eq:def_varphi_pq} of sums of an  martingale 
difference sequence. 
 \begin{Proposition} 
 \label{prop:moments_ordre_pq_orthomartingale_Banach}
 Let $\pr{\B,\norm{\cdot}_{\B}}$ be a separable $r$-smooth Banach space.
 For each   $1<p\leq r$ and $q>0$, there exists a constant $C\pr{\B,p,q}$ such 
that for each  martingale difference sequence
 $\pr{D_{i}}_{i\geq 1}$,
\begin{equation}\label{eq:moments_ordre_rq_orthomartingale_Banach_dim1}
\norm{\sum_{i=1}^nD_i}_{\B,p,q}\leq 
C\pr{\sum_{i=1}^n\norm{D_i}_{\B,p,q}  }^{1/p}.
\end{equation}
 \end{Proposition}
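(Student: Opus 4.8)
The plan is to trade the Luxemburg norm $\norm{\cdot}_{\B,p,q}$ for the Orlicz integral $Y\mapsto\E{\varphi_{p,q}\pr{\norm{Y}_{\B}}}$, which is harmless up to multiplicative constants because $\varphi_{p,q}$ satisfies a $\Delta_2$-condition and $\varphi_{p,q}\pr{ct}\leq c^p\varphi_{p,q}\pr{t}$ for $0<c\leq1$. Dividing every $D_i$ by $\pr{\sum_{i=1}^n\norm{D_i}_{\B,p,q}^p}^{1/p}$ (the case where this vanishes being trivial) and using absolute homogeneity of the Luxemburg norm, one reduces to the normalised situation $\sum_{i=1}^n\norm{D_i}_{\B,p,q}^p=1$; the scaling property of $\varphi_{p,q}$ then yields $\sum_{i=1}^n\E{\varphi_{p,q}\pr{\norm{D_i}_{\B}}}\leq1$, and it suffices to establish $\E{\varphi_{p,q}\pr{\norm{S_n}_{\B}}}\leq C\pr{\B,p,q}$ with $S_n=\sum_{i=1}^nD_i$.

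For $1<p<r$ I would obtain this by a truncation at a moving level. Writing $\E{\varphi_{p,q}\pr{\norm{S_n}_{\B}}}=\int_0^\infty\varphi_{p,q}'\pr{t}\PP\pr{\norm{S_n}_{\B}>t}\,\mathrm dt$ and bounding the part over $t\in(0,1)$ by $\varphi_{p,q}\pr{1}=1$, one splits, for each $t\geq1$, $D_i=D_i'\pr{t}+D_i''\pr{t}$ with $D_i'\pr{t}=D_i\ind{\norm{D_i}_{\B}\leq t}-\E{D_i\ind{\norm{D_i}_{\B}\leq t}\mid\Fca_{i-1}}$ and $D_i''\pr{t}=D_i-D_i'\pr{t}$; both are martingale difference sequences summing to $D_i$. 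Markov's inequality at the exponent $r$, Proposition~\ref{prop:moments_ordre_r_orthomartingale_Banach} with $d=1$, and conditional Jensen give $\PP\pr{\norm{\sum_iD_i'\pr{t}}_{\B}>t/2}\leq C\pr{\B}t^{-r}\sum_i\E{\norm{D_i}_{\B}^r\ind{\norm{D_i}_{\B}\leq t}}$, while Markov at the exponent $1$ and the triangle inequality give $\PP\pr{\norm{\sum_iD_i''\pr{t}}_{\B}>t/2}\leq 4t^{-1}\sum_i\E{\norm{D_i}_{\B}\ind{\norm{D_i}_{\B}>t}}$. Substituting these bounds, using $\varphi_{p,q}'\pr{t}\leq\pr{p+q}t^{p-1}\pr{1+\ln t}^q$ for $t\geq1$ and Fubini, the matter comes down to the two one-variable estimates $\int_T^\infty t^{p-1-r}\pr{1+\ln t}^q\,\mathrm dt\leq C\,T^{p-r}\pr{1+\ln^+T}^q$ (here $p<r$ enters) and $\int_1^T t^{p-2}\pr{1+\ln t}^q\,\mathrm dt\leq C\,T^{p-1}\pr{1+\ln^+T}^q$ (here $p>1$ enters); combined with $\norm{D_i}_{\B}^r\pr{\norm{D_i}_{\B}\vee1}^{p-r}\leq\norm{D_i}_{\B}^p$ they turn both resulting sums into $C\sum_i\E{\varphi_{p,q}\pr{\norm{D_i}_{\B}}}\leq C$, which finishes the normalised estimate.

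The endpoint $p=r$ is not reached by this scheme, since $\int_T^\infty t^{-1}\pr{1+\ln t}^q\,\mathrm dt$ diverges; there I would instead invoke a good-$\lambda$ (equivalently, Pinelis-type) inequality for martingales with values in an $r$-smooth Banach space, which dominates $\norm{S_n}_{\B}$ in distribution by the scalar square function $W=\pr{\sum_{i=1}^n\norm{D_i}_{\B}^r}^{1/r}$, hence $\norm{S_n}_{\B,r,q}\leq C\norm{W}_{r,q}$. Since $\varphi_{r,q}\pr{W}$ is comparable to $\varphi_{1,q}\pr{\sum_i\norm{D_i}_{\B}^r}$ and $\varphi_{1,q}$ is a convex Orlicz function (for $q>0$), so that $\mathbb L_{1,q}$ is a Banach function space, the triangle inequality there gives $\norm{\sum_i\norm{D_i}_{\B}^r}_{1,q}\leq\sum_i\norm{\norm{D_i}_{\B}^r}_{1,q}$, i.e.\ $\norm{W}_{r,q}^r\leq C\sum_i\norm{D_i}_{\B,r,q}^r$; via $\ell^r\hookrightarrow\ell^p$ and monotonicity of $\norm{\cdot}_{p,q}$ (to pass from $W$ to $\pr{\sum_i\norm{D_i}_{\B}^p}^{1/p}$) this square-function route actually covers $1<p\leq r$ uniformly. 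The only genuinely non-trivial martingale input is Proposition~\ref{prop:moments_ordre_r_orthomartingale_Banach} for $d=1$; everything else — the equivalence between $\norm{\cdot}_{\B,p,q}$ and $\E{\varphi_{p,q}\pr{\cdot}}$, the homogenisation, and the behaviour of $\varphi_{p,q}$ under the truncation — is routine use of the $\Delta_2$-property. I expect the main obstacle to be purely technical: keeping the logarithmic weight $\pr{1+\ln t}^q$ under control through the truncation (the two integral estimates and the recombination inequality $\norm{D_i}_{\B}^r\pr{\norm{D_i}_{\B}\vee1}^{p-r}\leq\norm{D_i}_{\B}^p$), together with, at the borderline $p=r$, having the good-$\lambda$ inequality for $r$-smooth martingales at hand.
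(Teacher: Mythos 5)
Your argument is correct (and, as your normalisation makes explicit, it proves the inequality with $\pr{\sum_{i=1}^n\norm{D_i}_{\B,p,q}^p}^{1/p}$ on the right-hand side, which is the version the paper actually uses afterwards, e.g.\ in Lemma~\ref{lem:norme_Orlicz_X_0m}), but for $1<p<r$ it follows a genuinely different route from the paper's. The paper proves the whole range $1<p\leq r$ at once by quoting the good-$\lambda$ inequality of \cite{MR3077911} for martingales in smooth Banach spaces, which compares $\max_n\norm{S_n}_{\B}$ with $\max\ens{\max_i\norm{D_i}_{\B},\pr{\sum_i\E{\norm{D_i}_{\B}^p\mid\Fca_{i-1}}}^{1/p}}$, extrapolating to the Orlicz function $\varphi_{p,q}$ as in Theorem~21.1 of \cite{MR0365692}, and recombining via the triangle inequality for $\norm{\cdot}_{1,q}$ and \eqref{eq:norme_el_pq_de_puissances}. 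Your compensated-truncation argument for $p<r$ needs only the type-$r$ bound (Proposition~\ref{prop:moments_ordre_r_orthomartingale_Banach} with $d=1$), Markov and Fubini, so it is more elementary and self-contained; the price is the loss of the endpoint, and at $p=r$ you invoke precisely the ingredient the paper relies on (good-$\lambda$/extrapolation domination of $\norm{S_n}_{\B}$ by a square function in Orlicz norm), so there your sketch collapses into the paper's proof rather than replacing it. Two small points to fix if you write this up: the lemma available in the literature (the one the paper quotes) dominates by the \emph{conditional} quantity $\pr{\sum_i\E{\norm{D_i}_{\B}^r\mid\Fca_{i-1}}}^{1/r}$ together with $\max_i\norm{D_i}_{\B}$, not directly by your $W=\pr{\sum_i\norm{D_i}_{\B}^r}^{1/r}$ — this is harmless, since the conditional sum is disposed of by Jensen (convexity of $\varphi_{1,q}$) in the recombination, exactly as in the paper, or by a Garsia--Burkholder domination step, but one of the two must be said; and the embedding you need is $\ell^p\hookrightarrow\ell^r$ for $p\leq r$ (i.e.\ $\norm{\cdot}_{\ell^r}\leq\norm{\cdot}_{\ell^p}$), not $\ell^r\hookrightarrow\ell^p$.
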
 
 
\begin{proof}

By Lemma~2.2 in \cite{MR3077911}, we can find a constant $K$ depending only on $\B$ and $p$ such that for each $x>0$, $\beta>1$ and $\delta\in\pr{0,\beta-1}$, 
\begin{multline}
 \PP\pr{\max_{1\leq n\leq N}\norm{\sum_{i=1}^n D_i }_{\B}>\beta x}
 \leq\pr{\frac{K\pr{\B}\delta}{\beta-\delta-1}}^r \PP\pr{\max_{1\leq n\leq 
N}\norm{\sum_{i=1}^n D_i }_{\B}>  x}\\+
\PP\pr{\max\ens{\max_{1\leq i\leq 
N}\norm{D_i}_{\B},\pr{\sum_{i=1}^N\E{\norm{D_i}_{\B}^p\mid\Fca_{i-1} } }^{1/p}  
}>\delta x  }.
\end{multline}
Proceeding as in the proof of Theorem~21.1 in \cite{MR0365692}, 
we find that there exists a constant $K\pr{\B,p,q}$ such that 
for each $\lambda$, 
\begin{multline}
\E{\varphi_{p,q}\pr{\frac{\norm{\sum_{i=1}^nD_i}_{\B}}{\lambda} }}
\\
\leq K\pr{\B,p,q}\E{\varphi_{p,q}\pr{\frac{\max\ens{\max_{1\leq i\leq 
n}\norm{D_i}_{\B},\pr{\sum_{i=1}^n\E{\norm{D_i}_{\B}^p\mid\Fca_{i-1} } }^{1/p}  
}}{\lambda} }}.
\end{multline}
Using the fact that there exists a constant $\kappa_{p,q}$ such that for each $x,y\geq 0$, $\varphi_{p,q}\pr{xy}\leq\kappa_{p,q}
\varphi_{p,q}\pr{x}\varphi_{p,q}\pr{y}$, we get that for each positive $\lambda,R$,
\begin{multline}
\E{\varphi_{p,q}\pr{\frac{\norm{\sum_{i=1}^nD_i}_{\B}}{R\lambda} }}
\\
\leq K\pr{\B,p,q}\kappa_{p,q}\varphi_{p,q}\pr{\frac 1R}\E{\varphi_{p,q}\pr{\frac{\max\ens{\max_{1\leq i\leq 
n}\norm{D_i}_{\B},\pr{\sum_{i=1}^n\E{\norm{D_i}_{\B}^p\mid\Fca_{i-1} } }^{1/p}  
}}{\lambda} }}.
\end{multline}
Take $R_0$ such that $K\pr{\B,p,q}\kappa_{p,q}\varphi_{p,q}\pr{\frac 1R_0}\leq 1$ in order to get that 
\begin{equation}
\norm{\sum_{i=1}^nD_i}_{\B,p,q}\leq R_0
\norm{\max\ens{\max_{1\leq i\leq 
n}\norm{D_i}_{\B},\pr{\sum_{i=1}^n\E{\norm{D_i}_{\B}^p\mid\Fca_{i-1} } }^{1/p}  
}}_{\B,p,q}.
\end{equation}
We derive \eqref{eq:moments_ordre_rq_orthomartingale_Banach_dim1} using
\begin{equation*}
\max\ens{\max_{1\leq i\leq 
n}\norm{D_i}_{\B},\pr{\sum_{i=1}^n\E{\norm{D_i}_{\B}^p\mid\Fca_{i-1} } }^{1/p}  
}\leq 
\pr{\sum_{i=1}^n\norm{D_i}_{\B}^p}^{1/p}+\pr{\sum_{i=1}^n\E{\norm{D_i}_{\B}^p\mid\Fca_{i-1} } }^{1/p},
\end{equation*} 
the triangle inequality and the fact that there exists a 
constant $\kappa'_{\alpha, p,q}$ such that for each non-negative random variable $Y$,
\begin{equation}\label{eq:norme_el_pq_de_puissances}
\norm{Y^{\alpha}}_{p,q}\leq \kappa'_{\alpha,p,q}\norm{Y}_{p\alpha,q}^\alpha;\quad
\norm{Y^p}_{1,q}\leq \kappa'_{p,q}\norm{Y}_{\alpha,p,q}.
\end{equation}
\end{proof} 
 \section{Bounds on series of truncated random variables}\label{apB}
 
 The truncation arguments we will use throughout the proofs 
 lead to consideration of bounds of series having the form 
 $\sum_{k=1}^\infty a_k\PP\pr{Y>b_k}$, $\sum_{k=1}^\infty a_k\E{Y\ind{Y\leq b_k}}$ or $\sum_{k=1}^\infty a_k\E{Y\ind{Y>b_k}}$ for some non-negative random variable $Y$ and some sequences $\pr{a_k}_{k\geq 1}$ and $\pr{b_k}_{k\geq 1}$.
 
\begin{Proposition}
For a non-negative random variable $Y$, $d\geq 1$, $1<p<r$, the following inequalities take place:
\begin{equation}\label{eq:series_queues_va_tronquee_carres}
\sum_{N\geq 1}2^{Nd\pr{1-r/p}}\ind{Y\leq 2^{Nd/p}}
\leq \kappa_{d,p,r}\pr{\ind{Y\leq 1}+Y^{p-r}\ind{Y>1}},
\end{equation}
\begin{equation}\label{eq:series_queues_va_non_tronquee_carres}
\sum_{N\geq 1}2^{-dN\pr{1-1/p}}\ind{Y>2^{dN/p}}\leq \kappa_{d,p}
Y^{p-1},
\end{equation}
\begin{equation}\label{eq:series_queues_va_non_tronquee_rectangles}
\sum_{k=1}^\infty 2^k k^{d-1}
\PP\pr{Y>\eps 2^{k/p}}\leq c_d\E{\varphi_{p,d-1}\pr{Y}},
\end{equation}
\begin{equation}\label{eq:series_queues_va_tronquee_rectangles}
\sum_{k=1}^\infty 2^{k\pr{1-r/p}}k^{d-1}\E{Y^r\ind{Y\leq 2^{k/p}}}
\leq \kappa_{p,r,d}\E{\varphi_{p,d-1}\pr{Y}},
\end{equation}
\begin{equation}\label{eq:series_E_Y_ind_Y>}
\sum_{k=1}^\infty 2^{k\pr{1-1/p}} k^{d-1}\E{Y\ind{Y>2^{k/p}}}\leq c_{p,d}\E{\varphi_{p,d-1}\pr{Y}}.
\end{equation}
\end{Proposition}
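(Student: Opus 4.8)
The plan is to reduce all five inequalities to a single elementary estimate on weighted geometric sums, and — for \eqref{eq:series_queues_va_non_tronquee_rectangles}, \eqref{eq:series_queues_va_tronquee_rectangles} and \eqref{eq:series_E_Y_ind_Y>}, which carry an expectation — to interchange the (nonnegative) sum and the integral by Tonelli's theorem, so that it suffices to prove a deterministic inequality pointwise in $\omega$ and then integrate it against $1$, $Y^{r}$ or $Y$. The estimate I would isolate first is: for a rate $c>0$ and $d\geq 1$ one has $\sum_{k=1}^{K}2^{ck}k^{d-1}\leq K^{d-1}\sum_{k=1}^{K}2^{ck}\leq \tfrac{2^{c}}{2^{c}-1}\,2^{cK}K^{d-1}$, whereas for $c<0$ and $K_{0}\geq 1$ the tail $\sum_{k\geq K_{0}}2^{ck}k^{d-1}$ is, via the shift $k=K_{0}+j$ and $(K_{0}+j)^{d-1}\leq 2^{d-1}(K_{0}^{d-1}+j^{d-1})$ together with $K_{0}\geq 1$, at most a constant depending only on $c$ and $d$ times $2^{cK_{0}}K_{0}^{d-1}$. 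Informally: a weighted geometric sum is comparable to its dominant term.

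I would dispatch \eqref{eq:series_queues_va_tronquee_carres} and \eqref{eq:series_queues_va_non_tronquee_carres} first, since they are already pointwise in $Y=Y(\omega)$. For \eqref{eq:series_queues_va_tronquee_carres}, $1<p<r$ makes the exponent $d(1-r/p)$ negative: if $Y\leq 1$ then $2^{Nd/p}\geq 2^{d/p}>1\geq Y$ for all $N\geq 1$, so the left-hand side equals the convergent constant $\sum_{N\geq 1}2^{Nd(1-r/p)}$; if $Y>1$, the indicator vanishes unless $N\geq N_{0}:=\lceil (p/d)\log_{2}Y\rceil\geq 1$, and the geometric tail from $N_{0}$ is at most $(1-2^{d(1-r/p)})^{-1}2^{N_{0}d(1-r/p)}$, where $2^{N_{0}d/p}\geq Y$ and $p-r<0$ give $2^{N_{0}d(1-r/p)}=(2^{N_{0}d/p})^{p-r}\leq Y^{p-r}$. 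For \eqref{eq:series_queues_va_non_tronquee_carres}, $1-1/p>0$ makes $C_{d,p}:=\sum_{N\geq 1}2^{-dN(1-1/p)}$ finite, so the left-hand side is always $\leq C_{d,p}$; it is moreover nonzero only when $Y>2^{d/p}$, in which case $Y^{p-1}\geq 2^{d(p-1)/p}$, so the claim holds with $\kappa_{d,p}=C_{d,p}\,2^{-d(p-1)/p}$.

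For \eqref{eq:series_queues_va_non_tronquee_rectangles}, \eqref{eq:series_queues_va_tronquee_rectangles} and \eqref{eq:series_E_Y_ind_Y>}, I would push the expectation outside, e.g.\ $\sum_{k\geq 1}2^{k}k^{d-1}\PP\pr{Y>\varepsilon 2^{k/p}}=\E{\sum_{k\geq 1,\ 2^{k/p}<Y/\varepsilon}2^{k}k^{d-1}}$, and similarly the truncations $\{Y\leq 2^{k/p}\}$ and $\{Y>2^{k/p}\}$ become restrictions of the range of $k$ after integrating against $Y^{r}$, resp.\ $Y$. In every case the inner sum runs over $1\leq k\leq K$ with $2^{K/p}\asymp Y$ (the cases \eqref{eq:series_queues_va_non_tronquee_rectangles}, \eqref{eq:series_E_Y_ind_Y>}, where the weight grows in $k$) or over $k\geq K_{0}$ with $2^{K_{0}/p}\asymp Y$ (the case \eqref{eq:series_queues_va_tronquee_rectangles}, where $2^{k(1-r/p)}$ decays); by the geometric-sum estimate it is comparable to its dominant term, which on $\{Y\geq 1\}$ is of order $Y^{p}\pr{1+\ln Y}^{d-1}$, $Y^{p-r}\pr{1+\ln Y}^{d-1}$ or $Y^{p-1}\pr{1+\ln Y}^{d-1}$ respectively, and is a constant on the complementary range where the summation set degenerates or is empty. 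Multiplying back by the integrand and integrating reproduces $\E{\varphi_{p,d-1}(Y)}=\E{Y^{p}\pr{1+\1{Y\geq 1}\ln Y}^{d-1}}$ up to a multiplicative constant; on $\{Y<1\}$ one uses $Y^{r}\leq Y^{p}$ (since $r>p$) to absorb the constant-times-$Y^{r}$ term of \eqref{eq:series_queues_va_tronquee_rectangles} into $Y^{p}=\varphi_{p,d-1}(Y)$.

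The computations are routine, and the main obstacle is bookkeeping rather than anything substantive: one must treat carefully the regime $Y\approx 1$ and $Y$ small, distinguishing whether the summation range is empty, a single term, or genuinely geometric, and use that $1+\1{Y\geq 1}\ln Y\geq 1$ keeps $\varphi_{p,d-1}$ from degenerating as $Y\downarrow 1$; and one must track the factor $\varepsilon^{-p}$ and the powers of $\log_{2}(1/\varepsilon)$ generated by the substitution $2^{k/p}<Y/\varepsilon$ in \eqref{eq:series_queues_va_non_tronquee_rectangles}, so that the constant $c_{d}$ there should in fact be read as depending also on $p$ and $\varepsilon$. Once this is arranged, the constants $\kappa_{d,p,r}$, $\kappa_{d,p}$, $c_{d}$, $\kappa_{p,r,d}$, $c_{p,d}$ are read off immediately.
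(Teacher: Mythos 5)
Your proposal is correct and follows essentially the same route as the paper's own (very condensed) proof: both reduce everything, after interchanging the summation over $k$ with the expectation (equivalently, with the paper's dyadic slicing of $Y$), to the elementary fact that a geometric sum weighted by $k^{d-1}$ is comparable to its dominant term, applied separately in the increasing and decreasing regimes. Your observation that the constant in \eqref{eq:series_queues_va_non_tronquee_rectangles} must in fact depend on $\eps$ and $p$ as well is accurate and harmless for the way the inequality is used in the paper.
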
 
 \begin{proof}
All these inequalities follow from the decompositions
\begin{equation}
\ind{Y>2^{ak}}=\sum_{j=k}^\infty \ind{2^{aj}<Y\leq 2^{a\pr{j+1}}}\mbox{ and }
\end{equation}
\begin{equation}
\ind{Y\leq 2^{ak}}=\ind{Y\leq 1}+\sum_{j=1}^k \ind{2^{a\pr{j-1}}<Y\leq 2^{aj}},
\end{equation}
 and the fact that 
\begin{equation}
\sum_{k=1}^j 2^{ka}k^{d-1}\leq \kappa_{a,d}2^jj^{d-1}.
\end{equation}
 \end{proof}
\end{appendices}
\providecommand{\bysame}{\leavevmode\hbox to3em{\hrulefill}\thinspace}
\providecommand{\MR}{\relax\ifhmode\unskip\space\fi MR }
\providecommand{\MRhref}[2]{%
  \href{http://www.ams.org/mathscinet-getitem?mr=#1}{#2}
}
\providecommand{\href}[2]{#2}

\end{document}